\newtheorem{thm}{Theorem}
\newtheorem{prop}[thm]{Propositoin}
\newtheorem{lemma}[thm]{Lemma}
\theoremstyle{definition}
\newtheorem{defn}[thm]{Definition}
\newtheorem{exam}{Example}[section]
\newtheorem{rem}{Remark}[section] 
\def\diag{\hbox{\rm diag}}
\def\O{{\mathcal O}}
\def\Aut{\hbox{\rm Aut}}
\def\K{{\mathcal K}}
\def\F{{\mathcal F}}
\def\L{{\mathcal L}}
\def\C{{\mathcal C}}
\def\comp{{\mathbb {C}}}
\def\cst{{C${}^*$}}
\begin{document}
\title{Dimension groups for self-similar maps and 
matrix representations of the cores of the associated 
$C^*$-algebras}

\author{Tsuyoshi Kajiwara}
\address[Tsuyoshi Kajiwara]{Department of Environmental and
Mathematical Sciences,
Okayama University, Tsushima, Okayama, 700-8530,  Japan}

\author{Yasuo Watatani}
\address[Yasuo Watatani]{Department of Mathematical Sciences,
Kyushu University, Motooka, Fukuoka, 819-0395, Japan}

\maketitle
\hyphenation{cor-re-spond-ences}
\begin{abstract}We introduce a dimension group for a self-similar map as 
the ${\rm K}_0$-group of the core of the \cst -algebra associated 
with the self-similar map together with the canonical endomorphism. 
The key step for the computation is an explicit description of the 
core as the inductive limit using 
their matrix representations over the coefficient algebra, 
which can be described explicitly by the singularity structure 
of branched points. We compute that the dimension group 
for the tent map is isomorphic to the countably generated 
free abelian group ${\mathbb Z}^{\infty}\cong {\mathbb Z}[t]$ 
together with the unilatral shift, i.e. the 
multiplication map by $t$ as an abstract group. 
Thus the canonical endomorphisms on the ${\rm K}_0$-groups are not 
automorphisms in geneal. This is a different point compared with 
dimension groups for topological Markov shifts. We can count the 
singularity structure in the dimension groups. 
\end{abstract}

\section
{Introduction}

Dimension groups for topological Markov shifts were introduced and studied by 
Krieger in \cite{Kr1} and \cite{Kr2} motivated by the K-groups 
for the AF-subalgebras of the associated Cuntz-Krieger algebras \cite{CK}.
The dimension group is an ordered abelian group with a canonical automorphism.
The dimension group for topological Markov shifts is very important, because 
it is a complete invariant up to shift equivalence. 
Matsumoto introduced a class of 
\cst-algebras associated with general subshifts in \cite{Ma1} and studied 
dimension groups for subshifts in \cite{Ma2}. 
In our paper we shall introduce dimension groups for self-similar maps as 
the $K_0$-groups of the cores of 
\cst -algebras associated with self-similar maps,  
where the core is the fixed point subalgebra under the gauge action 
of the \cst-algebra associated with a self-similar map. 
But canonical automorphisms on dimension goups can be generalized as only 
endomorphisms in the case of self-similar maps. The key step for 
the computation is an explicit description of the cores as the inductive limit 
using matrix representations over the coefficient algebra, 
which heavily depends on the structure of branched points.

Self-similar maps on compact metric spaces produce many interesting 
self-similar sets like Sierpinski Gasket, see for example \cite{F},
\cite{Kig}.  We constructed \cst-algebras for self-similar maps using 
\cst-correspondence and Pimsner construction (\cite{KW1}). 
\cst-algebras associated with self-similar maps are 
simple, purely infinite and in UCT class, and are classifiable by 
their K-groups (\cite{KW1}). The algebra of the continuous functions 
on the self-similar set is shown to be 
maximal abelian in \cite{KW6}.

In \cite{KW3}, \cite{KW5}, we classified traces and ideals of the core of
the \cst-algebras associated with self-similar maps under some
conditions, i.e. Assumption A and Assumption B, 
concerning branched point sets. If 
there exists no branched points,then 
the core of the \cst-algebra associated with a self-similar map is simple
and has a unique tracial state.  
On the other hand, if there exists a branched point, then the core has 
many finite discrete traces. 
Moreover, in \cite{KW5}, we constructed the matrix representation of the
core over the coefficient algebra, which makes the classification of 
ideals of the core possible.

In the paper, we present an explicit form of matrix
representations over the coefficient algebra 
of the finite cores of the \cst-algebra associated with
self-similar maps under the Assumption B. Here  the Assumption B 
roughly means that the set of branched points is a finite set and 
every orbit has no branched point or one branched point at most. Many 
typical examples like a tent map satisfies the Assumption B. 
We express the 
${\rm K}$ groups of the cores of
\cst-algebras associated with tent map and Sierpinski Gasket
respectively as inductive limit.  
We compute that the ${\rm K}_0$-group of the core of the 
\cst -algebras associated with the tent map is isomorphic to 
the countably generated 
free abelian group ${\mathbb Z}^{\infty}$ as an abstract group. 
Moreover, we can prove that the canonical endomorphism of  
the dimension group  for tent map case is the unilatral shift on 
${\mathbb Z}^{\infty}$, which is not surjective. On the 
otherhand, if a self-similar map has no branched points, then 
the canonical endomorphism of  the dimension group 
becomes an automorphism.

We refer other interesting approach on dimension groups for interval maps 
by F. Shultz \cite{Sh1},\cite{Sh2}  and Deaconu-Shultz \cite{DS}.

The contents of the paper is as follows: Section 1 is an introduction. 
In Section 2, we recall  
preliminary results on self-similar maps and \cst-correspondences.  
In Section 3, we summarize the construction of the matrix representation
over the coefficient algebra of the core following \cite{KW5}.  
In Section 4, we present explicit
description of the matrix representations of the finite cores for the
case of tent map and Sierpinski Gasket.  In Section 5, we present
explicit calculation of ${\rm K}$-groups of the cores for the case 
of tent map and Sierpinski Gasket using explicit description of the
finite core.  In Section 6, we define the  cnanonical endomorphism on the 
dimension group in general. If the the Hilbert module is finitely generated, 
then the canonical endomorphism on the 
dimension group is induced by the dual action of the gauge action. 
In the case of the tent map, we show that the canonical endomorphism is not 
surjective by the explicit calculation.  

This work was supported by JSPS KAKENHI Grant Number JP16K05178, 
JP23540242, and JP25287019.

\section
{Self-similar maps and their \cst -correspondences}
\label{sec:Self}
Let $(\Omega,d)$ be a (separable) complete  metric space.  A map 
$f:\Omega \rightarrow \Omega$ is said to be 
a proper contraction if there exists a constant $c$ and $c'$ with
 $0<c' \leq c <1$ such that
$0<  c'd(x,y) \leq d(f(x),f(y)) \le cd(x,y)$ for any  $x$, $y \in \Omega$.

We consider a family $\gamma = (\gamma_0,\dots,\gamma_{N-1})$ of $N$ proper
contractions on $\Omega$. We always assume that $N \geq 2$. Then there
exists a unique non-empty compact set $K \subset \Omega$ which is
self-similar in the sense that $K = \bigcup_{i=0}^{N-1} \gamma_i(K)$. See
Falconer \cite{F} and Kigami \cite {Kig} for more on fractal sets. 

In this paper, we usually forget an ambient space $\Omega$ as in \cite{KW1} 
and start with the following: Let $(K,d)$ be a compact metric set and  
 $\gamma = (\gamma_{0},\dots,\gamma_{N-1})$ be a family of $N$ proper contractions 
on $K$. We say that 
$\gamma$ is a self-similar map on $K$ if
$K = \bigcup_{i=0}^{N-1} \gamma_i(K)$.  Throughout the paper we assume that 
$\gamma$ is a self-similar map on $K$. 

\begin{defn}  \rm 
We say that $\gamma$ satisfies the open set condition if there exists an
open subset $V$ of $K$ such that $\gamma_j(V) \cap \gamma_k(V)=\phi$ for
$j \ne k$ and  $\bigcup_{i=0}^{N-1} \gamma_i(V) \subset V$.  See a book
 \cite{F} by Falconer. . 
\end{defn}

Let $\Sigma  = \{\,0,\dots,N-1\,\}$.  For $k \ge 1$, we put $\Sigma^k =
\{\,0,\dots,N-1\,\}^k$. For a self-similar map $\gamma$ on a compact 
metric space $K$, we
introduce the following subsets of $K$:
\begin{align*}
B_{\gamma} = &\{\,b \in K\,|\, b =\gamma_{i}(a)=\gamma_{j}(a), \,\,
\text{for some}\,\, a \in K\, \text{and } \, i \ne j\,\, \}, \\
C_{\gamma} = &\{\,a\in K\,|\,\gamma_{i}(a)=\gamma_{j}(a), \,\,
\text{for some}\,\, a \in K\, \text{and } \, i \ne j\,\, \} 
= \{\,a\in K\,|\,\gamma_j(a) \in B_{\gamma}\,\,\text{for
some}\,\,j\,\},  \\
P_{\gamma} = & \{\,a\in K\,|\, \exists k \ge 1,\,\exists (j_1,\dots,j_k)
\in \Sigma^k\,\,
  \text{such that}\,\, \gamma_{j_1}\circ \cdots \circ \gamma_{j_k}(a) \in
B_{\gamma}\}.  
\end{align*}

Any element $b$ in $B_{\gamma}$ is called a branched point and 
any element $c$ in $C_{\gamma}$ is called a branched value. 
We call $B_{\gamma}$ the branch set of $\gamma$, $C_{\gamma}$ the branch
value set of $\gamma$, and $P_{\gamma}$ the postcritical set of
$\gamma$. 

Throughout the paper, we assume that a self-similar map $\gamma$ on $K$
satisfies the following Assumption B: 
\begin{enumerate}
  \item There exists a continuous map $h$ of $K$ to $K$ which
        satisfies $h(\gamma_j(y))=y$ $(y\in K)$ for each $j$.
  \item The set $B_{\gamma}$ is a finite set.
  \item $B_{\gamma} \cap P_{\gamma} = \emptyset$. 
  
\end{enumerate}
The conditon (3) means that every orbit has no branched point or one branched point at most.

If (2) is replaced by a stronger condition 

(2)' The sets $B_{\gamma}$ and $P_{\gamma}$ are  finite sets, 

then it is exactly the assumption A studied in \cite{KW3}. 

Therefore $B_{\gamma}$ is the set of $b \in K$ such that $h$ is not 
locally homeomorphism at $b$, that is, $B_{\gamma}$ is the set of 
the branched points of $h$ in the usual sense. Moreover 
$C_{\gamma}= h(C_{\gamma})$ is the branch value set of $h$ and 
$\cap P_{\gamma}$ is the postcritical set of $h$ in the usual sense.

Many important examples including tent map and Sierpinski gasket 
satisfy the assumption B above.
If we assume that $\gamma$ satisfies the assumption B, then 
$K$ does not have any isolated points and $K$ is not countable.

{\bf Notation.}  
For fixed $b \in B_{\gamma}$, we denote by $e_{b}$ the number of $j$
such that $b =\gamma_{j}(h(b))$.  We call $e_{b}$ the branch index at
$b$. Then $e_{b}$ the branch index at $b$ for $h$ in the usual sense.  
We label these indices $j$ so that 
$$
\{ j \in \Sigma\ |\ b =\gamma_{j}(h(b))\} = 
 \{j(b,0),j(b,1),\dots, j(b,{e_b-1})\}
$$
satisfying $j(b,0)< j(b,1)<
\cdots <j(b,e_{b}-1)$. 

\begin{exam}\cite{KW1} 
 \label{ex:tent} {\rm (tent map) } 
Define a family $\gamma = (\gamma_0,\gamma_1)$ of proper contractions
on $K=[0,1]$ by $\gamma_0(y) = (1/2)y$ and $\gamma_1(y)= 1-(1/2)y$. Then 
$\gamma$ is a self-similar map. 
We see  that $B_{\gamma}=\{\,1/2\,\}$, $C_{\gamma} = \{\,1\,\}$. 
We define the ordinary tent map $h$ on $[0,1]$ by 
\[  h(x) = \begin{cases}
          2x & \quad  0 \le x \le 1/2  \\
          -2x + 2 & \quad 1/2 \le x \le 1
         \end{cases}
\]
Then $\gamma$ satisfies Assumption B. 
$\gamma = (\gamma_0,\gamma_1)$ is the inverse branches of the tent map $h$.
We see that $B_{\gamma}$ and $C_{\gamma}$ are finite sets.   
We note  that $h(1/2) = 1$, $h(1) = 0$, $h(0) =0$. Hence  $P_{\gamma} =
 \{\,0,1\,\}$ and $B_{\gamma} \cap P_{\gamma} = \emptyset$.    
\end{exam}

\begin{exam}\cite{KW1} {\rm (Sierpinski gasket)} Consider a regular 
triangle ${\mathcal T}$ on ${\bf R}^2$ with three
vertices ${\rm P} = (1/2, \sqrt{\,3}/2)$, ${\rm Q} = (0,0)$ and 
${\rm R} =(1,0)$.  
Let $S=(1/4,\sqrt{\,3}/4)$, ${\rm T}=(1/2,0)$ and ${\rm U}=(3/4,\sqrt{\,3}/4)$.
Define proper contractions $\tilde{\gamma}_0$, $\tilde{\gamma}_1$ and $\tilde{\gamma}_2$ on the triangle ${\mathcal T}$ by 
\[
\tilde{\gamma}_0(x,y) =
\left(\frac{x}{2}+\frac{1}{4},\frac{1}{2}y\right), \quad
\tilde{\gamma}_1(x,y)=\left(\frac{x}{2},\frac{y}{2}\right), \quad
\tilde{\gamma}_2(x,y) = \left(\frac{x}{2}+\frac{1}{2},\frac{y}{2}\right).
\]
Let $\alpha_{\theta}$ be a rotation by angle $\theta$.
We define $\gamma_0 = \tilde{\gamma}_0$, $\gamma_1 = \alpha_{-2\pi/3}\circ
\tilde{\gamma}_1$, $\gamma_2 = \alpha_{2\pi/3}\circ \tilde{\gamma}_2$.
We denote by $\Delta \rm{ABC}$ the regular triangle whose vertices
are A, B and C. Then  $\gamma_0(\Delta {\rm P}{\rm Q}{\rm R}) = \Delta {\rm P}{\rm S}{\rm U}$,
$\gamma_1(\Delta {\rm P}{\rm Q}{\rm R}) = \Delta {\rm T}{\rm S}{\rm Q}$ and
$\gamma_2(\Delta {\rm P}{\rm Q}{\rm R}) = \Delta {\rm T}{\rm R}{\rm U}$. 
Let 
$$
K = \bigcap_{n=1}^{\infty}\bigcap_{(j_1,\dots,j_n)\in \Sigma^n}
(\gamma_{j_1}\circ \cdots \circ \gamma_{j_n})({\mathcal T}). 
$$
The compact set $K$ is called a Sierpinski gasket. Then 
$\gamma = (\gamma_0, \gamma_1, \gamma_2)$ is a self-similar map on $K$.
Moreover 
$B_{\gamma}=\{\,{\rm S},{\rm T},{\rm U}\,\}$,
$C_{\gamma}=\{\,{\rm P},{\rm Q},{\rm R}\,\}$. Define a 
 continuous map $h$ by 
\[
  h(x,y) =
\begin{cases}
     &  \gamma_{0}^{-1}(x,y)  \quad (x,y) \in \Delta {\rm P}{\rm S}{\rm U} \cap K,\\
     &  \gamma_{1}^{-1}(x,y)  \quad (x,y) \in \Delta {\rm T}{\rm S}{\rm Q} \cap K, \\
     &  \gamma_{2}^{-1}(x,y)  \quad (x,y) \in \Delta {\rm T}{\rm R}{\rm
 U} \cap K. 
\end{cases}
\]
Then we have that $h({\rm S})={\rm Q}$, $h({\rm U})={\rm R}$, $h({\rm T})={\rm
 P}$, $h({\rm P})={\rm P}$, $h({\rm Q})={\rm R}$ and $h({\rm R})={\rm Q}$.  
Hence  $P_{\gamma}=\{\,{\rm P},{\rm Q},{\rm R}\,\}$ and the self-similar map
$\gamma$ on $K$ satisies the assumption B. 
\end{exam}

We recall a construction of a \cst-correspondence (or Hilbert \cst-bimodule) 
for the self-similar map
$\gamma=(\gamma_0,\dots,\gamma_{N-1})$ in \cite{KW1}.  
Let $A={\rm C}(K)$ be a coefficient algebra. Consider 
a cograph $\C_{\gamma} =\{\,(\gamma_j(y),y)\,|\,
j\in \Sigma,\, y \in K\,\}$. Let $X_{\gamma}={\rm C}(\C_{\gamma})$.
Define left and right $A$-module actions and an $A$-valued inner
product on $X_{\gamma}$ by,  for $f$, $g \in X_{\gamma}$ and $a$, $b \in A$, 
\begin{align*}
  & (a\cdot f\cdot b)(\gamma_j(y),y)=
  a(\gamma_j(y))f(\gamma_j(y),y)b(y) \quad  y \in K, \quad j=0,\dots,N-1, \\
  & (f|g)_A (y) = \sum_{j=0}^{N-1}
  \overline{f(\gamma_j(y),y)}g(\gamma_j(y),y).
\end{align*}
Recall that the "compact operators" $\K(X_{\gamma})$ 
is the \cst-algebra generated by the rank one operators 
$\{\theta_{x,y} \ | \ x,y \in X_{\gamma} \}$, 
where $\theta_{x,y}(z) = x(y|z)_A$ for $z \in X_{\gamma}$. 
When we do stress the role of the module $X$, we write $\theta_{x,y} =
\theta^X_{x,y}$. We denote by $\K(X_{\gamma})$ the set of "compact" operators on
$X_{\gamma}$, and by $\L(X_{\gamma})$ the set of adjointable operators
on $X_{\gamma}$. We define  an $*$-homomorphism $\phi$ of $A$ to
$\L(X_{\gamma})$ by $\phi(a)f=a\cdot f$. 


\begin{lemma} \cite{KW1} 
Let $\gamma=(\gamma_0,\dots,\gamma_{N-1})$ be a self-similar map 
on a compact set $K$. Then 
$X_{\gamma}$ is an $A$-$A$ correspondence and  full as a 
right Hilbert module. 
\end{lemma}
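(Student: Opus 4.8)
The plan is to verify the two defining properties of a \cst-correspondence separately --- that $X_\gamma$ is a right Hilbert $A$-module and that $\phi$ takes values in the adjointable operators $\L(X_\gamma)$ --- and then to establish fullness by a direct computation with the constant function. As a preliminary observation, note that the cograph $\C_\gamma$ is the image of the compact space $\Sigma \times K$ under the continuous map $(j,y) \mapsto (\gamma_j(y),y)$, hence is a compact subset of $K \times K$; thus $X_\gamma = C(\C_\gamma)$ is a genuine unital commutative \cst-algebra under the supremum norm $\|\cdot\|_\infty$, and every point of $\C_\gamma$ has the form $(\gamma_j(y),y)$.

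First I would check the right Hilbert module axioms. The map $y \mapsto (\gamma_j(y),y)$ is continuous, so $y \mapsto f(\gamma_j(y),y)$ is continuous for each $f \in X_\gamma$ and each $j$; summing $N$ such products shows $(f|g)_A \in C(K) = A$, so the inner product is well defined. Conjugate symmetry, right $A$-linearity and positivity are then immediate from the pointwise formula, and positive-definiteness follows because $(f|f)_A = 0$ forces $f(\gamma_j(y),y) = 0$ for all $j$ and $y$, i.e. $f \equiv 0$ on $\C_\gamma$. The only analytic point is completeness, which I would settle by comparing the module norm with the supremum norm: for every $y$,
\[
\sup_{0 \le j \le N-1} |f(\gamma_j(y),y)|^2 \le \sum_{j=0}^{N-1} |f(\gamma_j(y),y)|^2 \le N \sup_{0\le j \le N-1}|f(\gamma_j(y),y)|^2,
\]
and taking the supremum over $y$ gives $\|f\|_\infty^2 \le \|(f|f)_A\| \le N\|f\|_\infty^2$. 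Thus the Hilbert module norm $\|f\| = \|(f|f)_A\|^{1/2}$ is equivalent to $\|\cdot\|_\infty$, and since $C(\C_\gamma)$ is complete in $\|\cdot\|_\infty$ it is complete in the module norm; no further completion is needed.

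Next I would treat the left action. For $a \in A$ the operator $\phi(a)$ is pointwise multiplication by $a$ in the first coordinate, so $\phi(a)f \in C(\C_\gamma) = X_\gamma$, and a one-line computation from the inner product formula shows $(\phi(a)f|g)_A = (f|\phi(\bar a)g)_A$; hence each $\phi(a)$ is adjointable with $\phi(a)^* = \phi(\bar a)$, and $\phi$ is visibly a $*$-homomorphism $A \to \L(X_\gamma)$. This establishes that $X_\gamma$ is an $A$-$A$ correspondence.

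Finally, fullness is the easiest part. Let $1 \in X_\gamma$ denote the constant function and, for $a \in A$, let $1\cdot a$ be its image under the right action. Then $(1|1\cdot a)_A(y) = \sum_{j=0}^{N-1} a(y) = N a(y)$, so $a = \frac{1}{N}(1|1\cdot a)_A$ lies in the linear span of the inner products; hence this span already equals $A$ and $X_\gamma$ is full. I expect the completeness step --- specifically the norm-equivalence estimate above, together with the observation that $\C_\gamma$ is compact --- to be the only place requiring genuine care; the remaining verifications are routine pointwise manipulations.
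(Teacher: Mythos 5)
Your proof is correct. The paper itself gives no argument for this lemma --- it is quoted from \cite{KW1} --- and your verification (compactness of the cograph, the norm equivalence $\|f\|_\infty^2 \le \|(f|f)_A\| \le N\|f\|_\infty^2$ yielding completeness without passing to a completion, adjointability of the left multiplication, and fullness via $(1\,|\,1\cdot a)_A = Na$) is the standard and complete route; note only that the parametrization $(j,y)\mapsto(\gamma_j(y),y)$ need not be injective at branched points, which your argument never uses and which therefore causes no harm.
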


We write by $\O_{\gamma}= \O_{\gamma}(K)$ the Cuntz-Pimsner \cst -algebra 
(\cite{Pi})
associated with the \cst-correspondence $X_{\gamma}$. We call it 
the Cuntz-Pimsner algebra $\O_{\gamma}$ 
associated with a self-similar map ${\gamma}$. 
We usually identify $i(a)$ with $a$ in $A$.  
We also identify $S_{\xi}$ with $\xi \in X$ and simply write $\xi$
instead of $S_{\xi}$. The {\it gauge action}
$\alpha : {\mathbb R} \rightarrow \Aut \ {\mathcal O}_{\gamma}$ is 
defined by $\alpha_t(S_{\xi}) = e^{it}S_{\xi}$ for $\xi\in X_{\gamma}$ 
and $\alpha_t(a)=a$ 
for $a\in A$.

\begin{thm} \cite{KW1} 
Let  $\gamma$  be a self-similar map  on a compact metric space $K$.  
If $(K,\gamma)$ satisfies the  open set condition, then 
the associated Cuntz-Pimsner algebra
$\O_{\gamma}$ is simple and purely infinite.  
\end{thm}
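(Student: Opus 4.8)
The plan is to verify, for the particular correspondence $X_\gamma$, the general criteria for simplicity and pure infiniteness of a Cuntz--Pimsner algebra (in the spirit of Pimsner, Kajiwara--Pinzari--Watatani, and Katsura). Concretely, it suffices to check that (i) the left action $\phi:A\to\L(X_\gamma)$ is injective, (ii) $X_\gamma$ is full as a right Hilbert module, (iii) $X_\gamma$ is \emph{minimal}, i.e.\ $A=C(K)$ admits no nontrivial ideal invariant under the correspondence, and (iv) $X_\gamma$ is \emph{aperiodic}; simplicity will follow from (i)--(iv), and pure infiniteness will then be extracted from the hypothesis $N\ge 2$. Property (ii) is exactly the Lemma already quoted, and (i) is immediate, since $A=C(K)$ acts by multiplication, so $\phi(a)=0$ forces $a=0$.

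Next I would establish minimality (iii). Invariant ideals of $A=C(K)$ correspond to closed subsets $F\subseteq K$ via $I_F=\{a\in C(K):a|_F=0\}$, and I would unwind the invariance conditions for $X_\gamma$ using the explicit formulas for the left action $(a\cdot f)(\gamma_j(y),y)=a(\gamma_j(y))f(\gamma_j(y),y)$ and the inner product $(f|g)_A(y)=\sum_j \overline{f(\gamma_j(y),y)}\,g(\gamma_j(y),y)$. A direct computation of the vanishing sets of $\phi(I_F)X_\gamma$ and $X_\gamma I_F$ shows that $I_F$ is invariant precisely when $F$ satisfies the self-similarity relation $F=\bigcup_{j=0}^{N-1}\gamma_j(F)$. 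Since the $\gamma_j$ are proper contractions, Hutchinson's theorem guarantees that $K$ is the \emph{unique} nonempty compact set with this property; hence the only invariant closed sets are $\emptyset$ and $K$, which is minimality. This is the step where the contractive and self-similar structure (and, through it, the open set condition) enters.

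The main obstacle is the aperiodicity condition (iv), which says, roughly, that no tensor power $X_\gamma^{\otimes p}$ with $p\ge 1$ reduces to a copy of $A$ over a nontrivial invariant piece, equivalently that the induced backward dynamics carries no periodic obstruction. Here I would argue by contraction estimates: since each iterate $\gamma_{j_1}\circ\cdots\circ\gamma_{j_k}$ has image of diameter at most $c^{k}\,\mathrm{diam}(K)\to 0$, the tower of iterated cographs cannot close up periodically, and the open set condition ensures that the (finite, by Assumption~B) branch set $B_\gamma$ does not create spurious identifications that would restore periodicity. Making this precise --- controlling the behaviour at branch points and converting the diameter decay into the exact nonperiodicity statement demanded by the simplicity criterion --- is the delicate part of the argument.

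Finally, for pure infiniteness, once simplicity is in hand I would use $N\ge 2$: a finite frame for the right module $X_\gamma$ yields elements $S_{\xi_0},\dots,S_{\xi_{N-1}}\in\O_\gamma$ whose range projections are mutually orthogonal and together dominated by a projection strictly below the unit, so that $\O_\gamma$ contains a properly infinite projection. In a simple \cst-algebra the presence of one nonzero properly infinite projection forces every nonzero projection to be properly infinite, whence $\O_\gamma$ is purely infinite. Combining this with the simplicity established from (i)--(iv) gives that $\O_\gamma$ is simple and purely infinite, as claimed.
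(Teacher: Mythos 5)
First, a point of comparison: this paper does not actually prove the theorem --- it is quoted from \cite{KW1} --- so the proof you should be measured against is the one there, which verifies the simplicity and pure-infiniteness criteria of Kajiwara--Pinzari--Watatani for Hilbert bimodules (fullness, injectivity of the left action, minimality, and an (I)-freeness/aperiodicity condition), with the open set condition used precisely to manufacture elements of $X_{\gamma}$ supported in the pairwise disjoint pieces $\gamma_j(V)$. Your overall architecture is the same in spirit, and your steps (i)--(iii) are essentially right: injectivity and fullness are immediate, and for minimality the identification of invariant ideals with closed sets $F=\bigcup_j\gamma_j(F)$ plus Hutchinson uniqueness does work --- though note that uniqueness of the attractor needs only that the $\gamma_j$ are proper contractions, so your parenthetical claim that the open set condition ``enters through'' this step is misplaced; it is not needed there at all.

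The genuine gaps are in steps (iv) and the pure-infiniteness argument. For pure infiniteness you invoke ``a finite frame for the right module $X_{\gamma}$,'' but Section~3 of this paper states explicitly that when $B_{\gamma}\neq\emptyset$ the module $X_{\gamma}$ is \emph{not} finitely generated projective --- there is no finite frame for the tent map or the Sierpinski gasket, so the elements $S_{\xi_0},\dots,S_{\xi_{N-1}}$ with mutually orthogonal ranges cannot be produced this way. Indeed the naive candidates built from the branches fail to have orthogonal range projections exactly at a branched point $b=\gamma_i(a)=\gamma_j(a)$; repairing this is where the open set condition must actually be used (functions supported inside the disjoint sets $\gamma_j(V)$), and that argument is absent. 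Similarly, your aperiodicity step rests on diameter decay of the iterates, which by itself does not yield the required freeness condition: the issue is not that orbits ``close up'' metrically but that one must exhibit, for each $n$, bimodule elements separating $X_{\gamma}^{\otimes n}$ from $A$, and again the open set condition (not Assumption~B, which is not a hypothesis of the theorem and is not needed for it) is the tool. As written, both of these steps would fail at branched points, which are the only interesting case.
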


For example, if $\gamma$ is a self-similar map associated with the 
tent map or the 
Sierpinski gasket, then $\O_{\gamma}$ is simple and purely infinite.  

For the 
$n$-times inner tensor product $X_{\gamma}^{\otimes n}$ of $X_{\gamma}$, 
we denote by $\phi_n$ the left module action of $A$ on $X_{\gamma}^{\otimes n}$.
Define 
$$
\F^{(n)} = A \otimes I + \K(X_{\gamma})\otimes I + 
\K(X_{\gamma}^{\otimes 2})\otimes I + 
 \cdots + \K(X_{\gamma}^{\otimes n}) \subset \L(X_{\gamma}^{\otimes n})
$$
Then $\F^{(n)}$ is embedded to  $\F^{(n+1)}$ by $T \mapsto T\otimes I$ for 
$T \in \F^{(n)}$. Let 
$\F^{(\infty)} = \overline{\bigcup_{n=0}^{\infty}\F^{(n)}}$.
It is important to recall that Pimsner \cite{Pi} shows that 
we can identify $\F^{(n)}$ with the  
\cst-subalgebra of  $\O_{\gamma}$ generated by $A$ and $S_xS_y^*$ 
for $x,y \in X^{\otimes k}$, $k=1,\dots, n$ 
under identifying $S_xS_y^*$  with 
$\theta_{x,y}$. Therefore we see that the inductive limit algebra 
$\F^{(\infty)}$ can be identified with 
the fixed point subalgebra $\O_{\gamma}^{\mathbb T}$ of $\O_{\gamma}$
under the gauge action. We call it the {\it core} of $\O_{\gamma}$.
We also call $\F^{(n)}$ the {\it finite core} or the finite $n$-th core
of $\O_{\gamma}$.

\section
{Matrix representation over the coefficient algebra of cores}
\label{sec:Matrix}

For a Hilbert right $A$-module $X$, 
we denote by $\K_0(X)$ the set of  finite sums of "rank one "
operators $\theta_{x,y}$ for $x,y \in X$. If $X$ is finitely generated 
projectile module, then $\K_0(X) = \K(X)= \L(X)$. 
If a self-similar map $\gamma=(\gamma_0,\dots,\gamma_{N-1})$ 
has a branched point, then the Hilbert right $A$-module 
$X_{\gamma}$ is not finitely generated projectile module and 
$\K(X_{\gamma})\not= \L(X_{\gamma})$. But 
if the self-similar map $\gamma$ satisfies Assumption B, it is shown 
in \cite{KW5} that $X_{\gamma}$ is near to a finitely generated
projectile module in the following sense: 
$\K_0(X_{\gamma}) = \K(X_{\gamma})$. 
Moreover $\K(X_{\gamma})$ is realized as 
a subalgebra of the full matrix algebra 
$M_N(A) = C(K, M_N({\mathbb C})$ over $A = C(K)$  
consisting of matrix valued functions $f$ on $K$ such that 
their scalar matrices $f(c)$ live in certain  subalgebras 
$Q_{\gamma}(c)M_N({\mathbb C})Q_{\gamma}(c)$ cutted down by projections 
$Q_{\gamma}(c)$ 
for each $c$ in the finite set $C_{\gamma}$ and live in 
the full matrix algebra 
$M_N({\mathbb C})$ for other $c \notin C_{\gamma}$. Therefore 
$\K(X_{\gamma})$ has certain informations on the sigularity 
structure of branched points. 

We present only the outline of the matrix representation 
over the coefficient algebra of the finite
core of $\O_{\gamma}$ under Assumption B.  We refer the detail to the 
section two of \cite{KW5}.  

Let $Y_{\gamma}:=A^N$ be a free module over $A = {\rm C}(K)$. 
Then $\L(Y_{\gamma})$ 
is isomorphic to $M_N(A)$. 
Recall that  left and right $A$-module actions and an $A$-inner product 
on $Y_{\gamma}:=A^N$ are given as follows: 
\begin{align*}
& (a \cdot f \cdot b)_i(y) = a(\gamma_i(y))f_i(y)b(y),  \\
& (f|g)_A(y) = \sum_{i=0}^{N-1} \overline{f_i(y)}g_i(y),
\end{align*}
where $f=(f_0,\dots,f_{N-1})$, $g=(g_0,\dots,g_{N-1}) \in Y_{\gamma}$ and
$a$, $b \in A$.  Thus 
$Y_{\gamma}$ is an $A$-$A$ correspondence and $Y_{\gamma}$
is a finitely generated projective right module over $A$.

Let $\mathcal A$ be a subalgebra of 
$\L(Y_{\gamma}) = M_N(A) = C(K,M_N({\mathbb C}))$.  
For $z \in K$, the fiber ${\mathcal A}(z)$ of $\mathcal A$ 
at $z$ is defined by 
\[
{\mathcal A}(z) = \{T(z) \in M_N({\mathbb C}) \ | 
\ T \in {\mathcal A}  \subset C(K,M_N({\mathbb C})) \}.
\]  

We define a subset $Z_{\gamma}$ of $Y_{\gamma}$ by
\begin{align*}
& Z_{\gamma}:= \{\,f = (f_0,\dots,f_{N-1}) \in  A^N\,  |\, \\
& \text{ for any } 
 c \in C_{\gamma}, b \in B_{\gamma} \text { with } h(b) = c, \  
  f_{j(b,k)}(c)=f_{j(b,k')}(c) \quad 0\le k, k' \le
e_{b}-1 \,   \}.   
\end{align*}

Thus $i$-th component $f_i(c)$ of the 
vector $(f_1(c),\dots,f_N(c)) \in {\mathbb C} ^N$ is equal to the 
$i'$-th component $f_{i'}(c)$ of it for any $i,i'$ in the same index subset 
$$
\{ j \in \Sigma\ |\ b =\gamma_{j}(c)\} = 
 \{j(b,0),j(b,1),\dots, j(b,e_b-1)\}
$$
for each  $b \in B_{\gamma}$. 

In order to get the idea of rather complicated but important 
notations below, 
we consider a simplified example: 
Assume  that $N = 5, c \in C_{\gamma}$ and 
$h^{-1}(c) = \{b_1, b_2\} \subset B_{\gamma}$, 
\[
b_1 =  \gamma_0(c)= \gamma_1(c), 
\ \ b_2 = \gamma_2(c)=  \gamma_3(c)= \gamma_4(c). 
\]
that is, 
\[
b_1 \; \overset{\gamma_0, \gamma_1 }\Longleftarrow \; 
c \; \overset{\gamma_2, \gamma_3, \gamma_4}\Longrightarrow \; 
b_2 .
\]

Consider the following  degenerated subalgebra ${\mathcal A}$ 
of a full matrix algebra $M_5({\mathbb C})$: 
\[
{\mathcal A} = \{a = (a_{ij}) \in M_5({\mathbb C}) \ | \ 
a_{0j} = a_{1j},\  a_{i0} = a_{i1}, \ a_{2j} = a_{3j}=a_{4j},\  
a_{i2} = a_{i3} = a_{i4} \}. 
\]
Then 
\[
{\mathcal A} 
= \{ \begin{pmatrix}
  a & a & b & b & b  \\
  a & a & b & b & b  \\
  c & c & d & d & d  \\
  c & c & d & d & d  \\
  c & c & d & d & d  \\
\end{pmatrix} 
\ | \ a,b,c,d \in {\mathbb C} \} 
\]
is isomorphic to $M_2({\mathbb C})$. 
Consider the subspace 
\[
W = \{(x,x,y,y,y)\in {\mathbb C}^5 \ | 
\ x \in {\mathbb C}, y \in {\mathbb C}\}
\]
of ${\mathbb C}^5$. 
Let $u_1 = \frac{1}{\sqrt 2} \ ^t(1,1,0,0,0) \in W$ and  
$u_2 = \frac{1}{\sqrt 3} \ ^t(0,0,1,1,1) \in W$.  
Then $\{u_1,u_2\}$ is a basis of $W$. Let $Q_{\gamma}(c)$ 
be the projection onto the the subspace $W$. Then 
\[
{\mathcal A} = Q_{\gamma}(c)M_5({\mathbb C})Q_{\gamma}(c) 
\cong M_2({\mathbb C}). 
\]
Moreover 
$\{\theta^{W}_{u_i,u_j}\}_{i,j =1,2}$ is a 
matrix unit of ${\mathcal A}$ and 
\[
{\mathcal A} 
= \{ \sum_{i,j=1}^2 a_{ij} \theta^{W}_{u_i,u_j} 
\ | \  a_{ij} \in {\mathbb C}\}
= L(W). 
\]

In general, 
$w_c:= \dim (Z_{\gamma}(c))$ is equal to the cardinality  of $h^{-1}(c)$ 
without counting multiplicity. We can take the following basis 
$\{\,u^c_i\,\}_{i=1,\dots,w_c}$ of the fiber 
$W:= Z_{\gamma}(c) \subset {\mathbb C}^N$: 
Rename $h^{-1}(c) = \{b_1, \dots, b_{w_c}\}$. 
Then the $j$-th component of the vector $u^c_i$ is equal to 
$\frac{1}{\sqrt{e_{b_i}}}$ if 
$j \in \{ j \in \Sigma\ |\ b_i =\gamma_{j}(h(b_i))\} = 
 \{j(b_i,0),j(b_i,1),\dots, j(b_i,e_{b_i}-1)\}$ 
and is equal to 0 if $j$ is otherwise. 

Let $Q_{\gamma}(c)$ 
be the projection onto the the subspace $W$. Then 
\[
\L(W) = Q_{\gamma}(c)M_N({\mathbb C})Q_{\gamma}(c) 
\cong M_{w_c}({\mathbb C}). 
\]

Then $Z_{\gamma}$ is also an $A$-$A$ correspondence
with the $A$-bimodule structure and the $A$-valued inner product 
inherited from $Y_{\gamma}$.  
For $\xi \in X_{\gamma}$, we define $\varphi(\xi) \in Z_{\gamma}$ by 
 \[
  \varphi(\xi)(y) = (\xi(\gamma_0(y),y),\dots,\xi(\gamma_{N-1}(y),y)) \quad 
   y \in K.  
 \]
By Lemma 5 in \cite{KW5}, the \cst-correspondences $X_{\gamma}$ and
$Z_{\gamma}$ are isomorphic under $\varphi$.  
Identifying through  the map $\varphi$, 
we may regard $\K(X_{\gamma})$ as a closed subset
of $\L(Y_{\gamma}) = \L(A^N) = {\rm C}(M_{N}(\comp))$.

For a Hilbert $A$-module $W$, we denote by 
$\K_0(W)$ the set of ``finite rank operators'' 
(i,e, finite sum of rank one operators) on $W$, that is, 
\[
\K_0(W) = \{ \sum_{i=1}^n \theta^{W}_{x_i,y_i} \ | 
\ n \in {\mathbb N}, x_i,y_i \in W \}.
\]

As in \cite{KW5}, we define the following subalgebra $D^{\gamma}$ of  
$M_N(C(K))= C(K, M_N({\mathbb C}))$:  
\begin{align*}
  D^{\gamma} = & \{\, a = [a_{ij}]_{i,j}\in M_N(A)
= C(K, M_N({\mathbb C})) \,|\,
\text{ for } c \in C_{\gamma}, b \in B_{\gamma} 
\text{ with } h(b)=c,  \\
& a_{j(b,k),i}(c)=a_{j(b,k'),i}(c)
  \quad 0 \le k,k'\le e_{b}-1, 0 \le i \le N-1 \\
&  a_{i,j(b,k)}(c)=a_{i,j(b,k')}(c)
  \quad 0 \le k,k'\le e_{b}-1, 0 \le i \le N-1
\}, 
\end{align*}

The algebra $D^{\gamma}$ is a closed *subalgebra of 
$M_N(A)=\K(Y_{\gamma})$ and its fiber $D^{\gamma}(c)$ at $c$ 
is described as 

$$
D^{\gamma}(c) = Q_{\gamma}(c)M_N({\mathbb C})Q_{\gamma}(c)
\cong M_{w_c}({\mathbb C}), 
$$
where $Q_{\gamma}(c)$ is the projection of ${\mathbb C}^N$ 
onto the subspace $W = Z_{\gamma}(c)$ and $w_c = \dim Z_{\gamma}(c)$.

For $f = (f_0,\dots,f_{N-1}) \in  Z_{\gamma}$ and 
$g=(g_0,\dots,g_{N-1}) \in
Z_{\gamma}$, 
the rank one operator
$\theta_{f,g}^{Y_{\gamma}} \in \L(Y_{\gamma})$ 
is in $D^{\gamma}$ and represented by the operator matrix 
  \[
   \theta_{f,g}^{Y_{\gamma}} = [f_i\overline{g}_j]_{ij} \in M_N(A),   
  \]
by Lemma 7 in \cite{KW5}. 
Let $\K(Z_{\gamma} \subset Y_{\gamma}) \subset \L(Y_{\gamma})$  be the 
norm closure of 
\[
\K_0(Z_{\gamma} \subset Y_{\gamma}) 
: = \{ \sum_{i=1}^n \theta^{Y_{\gamma}}_{x_i,y_i} \in \L(Y_{\gamma}) \ | 
\ n \in {\mathbb N}, x_i,y_i \in Z_{\gamma} \}.
\]

The following is Lemma 9 in \cite{KW5}, and essential for matrix
representation of finite cores.  

\begin{lemma}\label{lem:finite-rank} 
Let $\gamma$ be a self-similar map on a compact metric space $K$ and 
satisfies Assumption B.  
Then $\K_0(X_{\gamma})=\K(X_{\gamma})$, 
$\K_0(Z_{\gamma})=\K(Z_{\gamma})$ and 
$\K_0(Z_{\gamma} \subset Y_{\gamma})= \K(Z_{\gamma} \subset Y_{\gamma})  
= D^{\gamma} \subset M_N(A)$.  
\end{lemma}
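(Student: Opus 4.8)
The plan is to establish the three equalities in the order (3), then (2), then (1), reducing everything to one C${}^*$-algebraic observation. For (3) the inclusion $\K_0(Z_\gamma \subset Y_\gamma)\subseteq D^\gamma$ is immediate from the formula $\theta^{Y_\gamma}_{f,g}=[f_i\overline{g}_j]$ (Lemma 7 of \cite{KW5}) together with the fact that $D^\gamma$ is a linear space; since $D^\gamma$ is moreover closed, this already yields $\K(Z_\gamma \subset Y_\gamma)=\overline{\K_0(Z_\gamma \subset Y_\gamma)}\subseteq D^\gamma$. Thus it suffices to prove the reverse inclusion $D^\gamma\subseteq \K_0(Z_\gamma \subset Y_\gamma)$, i.e. that every element of $D^\gamma$ is a \emph{finite} sum of rank one operators coming from $Z_\gamma$; the whole chain $D^\gamma\subseteq\K_0\subseteq\K\subseteq D^\gamma$ then collapses.

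The key point is that $D^\gamma$ is a C${}^*$-subalgebra of $M_N(A)$, so we may take square roots inside it. Let $\epsilon_0,\dots,\epsilon_{N-1}$ denote the standard basis of $Y_\gamma=A^N$, so that $\sum_k\epsilon_k\epsilon_k^*=I$. Given a positive element $a\in D^\gamma$, put $b=a^{1/2}\in D^\gamma$. Reading off the columns $b\epsilon_k$ of $b$, the identity $\sum_k\epsilon_k\epsilon_k^*=I$ gives
\[
a=bb^*=\sum_{k=0}^{N-1}\theta^{Y_\gamma}_{b\epsilon_k,\,b\epsilon_k}.
\]
Each column lies in $Z_\gamma$: for every $c\in C_\gamma$ the defining condition of $D^\gamma$ reads $b(c)=Q_\gamma(c)b(c)Q_\gamma(c)$, hence $(b\epsilon_k)(c)=b(c)\epsilon_k\in\mathrm{range}\,Q_\gamma(c)=Z_\gamma(c)$, which is exactly the requirement for membership in $Z_\gamma$. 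Therefore $a\in\K_0(Z_\gamma \subset Y_\gamma)$. Since an arbitrary element of $D^\gamma$ is a linear combination of four positive elements of $D^\gamma$, we conclude $D^\gamma\subseteq\K_0(Z_\gamma \subset Y_\gamma)$, proving (3). This argument sidesteps the absence of a finite frame for $Z_\gamma$: because the fibre projection $z\mapsto Q_\gamma(z)$ jumps at $C_\gamma$, there is no finite family $\{\xi_k\}\subset Z_\gamma$ with $\sum_k\theta^{Y_\gamma}_{\xi_k,\xi_k}=I$, so one cannot resolve the identity; taking the square root \emph{inside} $D^\gamma$ and using the cut down $b(c)=Q_\gamma(c)b(c)Q_\gamma(c)$ is what makes the columns land in $Z_\gamma$ without any such resolution.

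To pass from (3) to (2) I would compare the two rank one calculi through their Gram matrices. The assignment $\theta^{Z_\gamma}_{x,y}\mapsto\theta^{Y_\gamma}_{x,y}$ extends to a surjective $*$-homomorphism from $\K_0(Z_\gamma)$ onto $\K_0(Z_\gamma \subset Y_\gamma)$, and the essential claim is that it is isometric. Writing a finite sum $S=\sum_i\theta_{x_i,y_i}$ as $XY^*$ with $X=(x_1,\dots,x_n)$, $Y=(y_1,\dots,y_n)$ regarded as module maps $A^n\to Z_\gamma$ (resp. $A^n\to Y_\gamma$), one has
\[
\|S\|^2=\|X(Y^*Y)X^*\|=\big\|(Y^*Y)^{1/2}(X^*X)(Y^*Y)^{1/2}\big\|,
\]
an expression depending only on the Gram matrices $X^*X=[(x_i|x_j)_A]$ and $Y^*Y=[(y_i|y_j)_A]$ in $M_n(A)$. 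Since the inner product on $Z_\gamma$ is inherited from $Y_\gamma$, these Gram matrices are identical in the two modules, so the map is isometric, hence well defined and injective. Being an isometric $*$-isomorphism onto $\K_0(Z_\gamma \subset Y_\gamma)=D^\gamma$, which is complete, it forces $\K_0(Z_\gamma)$ to be complete as well, that is $\K_0(Z_\gamma)=\K(Z_\gamma)$, which is (2).

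Finally (1) follows from (2) by transport of structure. By Lemma 5 of \cite{KW5} the map $\varphi$ is an isomorphism of the correspondences $X_\gamma$ and $Z_\gamma$, so $\theta^{X_\gamma}_{\xi,\eta}\mapsto\theta^{Z_\gamma}_{\varphi(\xi),\varphi(\eta)}$ implements $*$-isomorphisms $\K_0(X_\gamma)\cong\K_0(Z_\gamma)$ and $\K(X_\gamma)\cong\K(Z_\gamma)$ compatibly; combined with (2) this gives $\K_0(X_\gamma)=\K(X_\gamma)$. The main obstacle in the whole argument is the reverse inclusion in (3): the statement is precisely that finite rank is already closed although $Z_\gamma$ is not finitely generated projective, and the crux is to produce finite (not merely norm convergent) expressions; the square root computation above is what resolves this, while the only point requiring care in (2) is the well definedness and isometry of the comparison map, which the Gram matrix identity supplies.
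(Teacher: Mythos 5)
Your argument is correct, and it is more than the paper itself supplies: the paper does not prove this lemma but simply imports it as Lemma~9 of \cite{KW5} (together with Lemmas~5, 7 and 8 there), so there is no in-text proof to compare against. Your route is a clean, self-contained substitute. The decisive step is (3): since $D^{\gamma}$ is a norm-closed (non-unital) $*$-subalgebra of $M_N(A)$, the square root $b=a^{1/2}$ of a positive $a\in D^{\gamma}$ stays in $D^{\gamma}$ by functional calculus with $t\mapsto t^{1/2}$ (which vanishes at $0$ --- worth noting, since $I\notin D^{\gamma}$ when $B_{\gamma}\neq\emptyset$), and the fibrewise condition $b(c)=Q_{\gamma}(c)b(c)Q_{\gamma}(c)$ forces the columns $b\epsilon_k$ into $Z_{\gamma}$, so $a=\sum_k\theta^{Y_{\gamma}}_{b\epsilon_k,b\epsilon_k}$ is a \emph{finite} sum of rank-one operators with entries in $Z_{\gamma}$. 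This correctly circumvents the absence of a finite frame for $Z_{\gamma}$ by borrowing the frame of the ambient free module and absorbing it into $b$. Your step (2) is essentially a re-proof of Lemma~8 of \cite{KW5} (the isomorphism $\delta$ the paper quotes): the Gram-matrix identity $\|XY^*\|^2=\|(Y^*Y)^{1/2}X^*X(Y^*Y)^{1/2}\|$ shows the norm of a finite-rank sum depends only on the inherited inner products, giving well-definedness, injectivity and isometry of $\theta^{Z_{\gamma}}_{x,y}\mapsto\theta^{Y_{\gamma}}_{x,y}$ simultaneously, and completeness of $D^{\gamma}$ then transports back to give $\K_0(Z_{\gamma})=\K(Z_{\gamma})$. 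Step (1) is immediate from the correspondence isomorphism $\varphi$. I see no gap; the only cosmetic point is that the ``surjective $*$-homomorphism'' in (2) should officially be defined only after the isometry on representatives is established, which is how your logic in fact runs.
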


For any $T \in \K(Z_{\gamma} \subset Y_{\gamma})$, we have 
$T(Z_{\gamma}) \subset Z_{\gamma}$ and by Lemma 8 in \cite{KW5}, 
the restriction map 
\[
\delta :\K(Z_{\gamma} \subset Y_{\gamma})\ni T 
\rightarrow T|_{Z_{\gamma}} \in \K(Z_{\gamma}) 
\]
is an onto *isomorphism such that 
\[
\delta(\sum_{i=1}^n \theta^{Y_{\gamma}}_{x_i,y_i}) = 
\sum_{i=1}^n \theta^{Z_{\gamma}}_{x_i,y_i}. 
\]
Then we have an isometric *-homomorphism $\delta^{-1}$ from 
$\K(Z_{\gamma})$ to $\K({Z_{\gamma} \subset Y_{\gamma}})$ 
satisfying 
\[
 \delta^{-1}(\sum_{i=1}^n \theta^{Z_{\gamma}}_{x_i,y_i}) = 
\sum_{i=1}^n \theta^{Y_{\gamma}}_{x_i,y_i}.  
\]

We can define an injective *-homomorphism  $\Omega^{(1)}$ of  
$\K(X_{\gamma})$ to $\K(Y_{\gamma})$ such that 
\[
  \Omega^{(1)}(\theta^{X_{\gamma}}_{f,g})
     = \theta^{Y_{\gamma}}_{\varphi(f),\varphi(g)}.  
\]
Then by Lemma \ref{lem:finite-rank}, the range of 
$\Omega^{(1)}$ is $D^{\gamma}$.  

Fix a natural number $n$. We shall describe  the matrix representation of 
$\K(X_{\gamma}^{\otimes n})$ for  $n$-times tensor product 
$X_{\gamma}^{\otimes n}$. 
We consider the composition of self-similar maps $\gamma$.  
Put $\gamma^n = \{ \gamma_{i_n}\circ \cdots \circ
\gamma_{i_1}\}_{(i_1,\dots,i_n) \in \Sigma^n}$.  
Then $\gamma^n$ is a self-similar map on
the same compact metric space $K$. For the description of the branched
points for $\gamma^n$, the following Lemmas are given in \cite{KW5}.  

\begin{lemma} \cite{KW5}
\label{lem:branch} 
Let  $\gamma$  be a self-similar map  on a compact metric space $K$ and 
satisfies Assumption B. 
Then $C_{\gamma^n}$ and $B_{\gamma^n}$ are finite sets and 
$C_{\gamma^n} \subset C_{\gamma^{n+1}}$ for each $n= 1,2,3,\dots$. 
The set of branched points $B_{\gamma^n}$ is given by
\[
  B_{\gamma^n} = \{\,(\gamma_{j_k}\circ \cdots \circ
 \gamma_{j_1})(b)\,|\, b \in B_{\gamma},\, (j_1,\dots,j_k) \in
  \Sigma^k, \, 0 \le k \le n-1 \, \}.
\]
Moreover, if $\gamma_{i_n}\circ \cdots \circ \gamma_{i_1}(c)
=\gamma_{j_n}\circ \cdots \circ \gamma_{j_1}(c)$ and 
$(i_1,\dots,i_n) \ne (j_1,\dots,j_n)$, then there exists unique $1 \le s
\le n$ such that $i_s \not= j_s$ and $i_{p}=j_{p}$ for $p \ne s$.
\end{lemma}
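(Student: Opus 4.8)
The plan is to exploit the left inverse $h$ furnished by Assumption~B. Since $h(\gamma_j(y)) = y$ for every $j$ and every $y$, the map $h$ strips off the \emph{outermost} contraction of any composition, independently of its label; iterating, $h^n$ is a left inverse of every length-$n$ composition $\gamma_{i_n} \circ \cdots \circ \gamma_{i_1}$. This single observation drives the whole argument.

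First I would prove the uniqueness (``differ in exactly one coordinate'') statement, which is the heart of the lemma. Suppose $\gamma_{i_n} \circ \cdots \circ \gamma_{i_1}(c) = \gamma_{j_n} \circ \cdots \circ \gamma_{j_1}(c)$ and set $a_k = \gamma_{i_k} \circ \cdots \circ \gamma_{i_1}(c)$ and $b_k = \gamma_{j_k} \circ \cdots \circ \gamma_{j_1}(c)$ for $0 \le k \le n$, with $a_0 = b_0 = c$. Applying $h$ repeatedly to the equality $a_n = b_n$, and using that $h$ peels off the outermost map, yields $a_k = b_k$ for all $k$. Hence $\gamma_{i_k}(a_{k-1}) = \gamma_{j_k}(a_{k-1})$ for each $k$, and whenever $i_k \ne j_k$ this forces $a_{k-1} \in C_{\gamma}$ and $a_k \in B_{\gamma}$. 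If the two words differed at two indices $s < t$, then $a_s \in B_{\gamma}$ while $a_t = \gamma_{i_t} \circ \cdots \circ \gamma_{i_{s+1}}(a_s) \in B_{\gamma}$ with $t - s \ge 1$, so also $a_s \in P_{\gamma}$, contradicting $B_{\gamma} \cap P_{\gamma} = \emptyset$. Thus at most one coordinate differs, which is the assertion.

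Next I would extract the explicit description of $B_{\gamma^n}$. For the inclusion $\subseteq$, given $\beta \in B_{\gamma^n}$ I write $\beta = a_n$ as above with the two witnessing words differing exactly at some index $s$; then $b := a_s \in B_{\gamma}$ and $\beta = \gamma_{i_n} \circ \cdots \circ \gamma_{i_{s+1}}(b)$ is a composition of length $n - s \le n - 1$. For $\supseteq$, given $b \in B_{\gamma}$ I choose $i \ne i'$ with $b = \gamma_i(h(b)) = \gamma_{i'}(h(b))$, and use the self-similarity $K = \bigcup_{\ell} \gamma_{\ell}(K)$ to back $h(b)$ up through $n - k - 1$ further contractions to some $c' \in K$; inserting $i$, respectively $i'$, produces two distinct length-$n$ words with common image $\beta = (\gamma_{j_k} \circ \cdots \circ \gamma_{j_1})(b)$, so $\beta \in B_{\gamma^n}$.

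The remaining bookkeeping is then routine. By the explicit formula together with finiteness of $B_{\gamma}$ (Assumption~B(2)), $B_{\gamma^n}$ is a finite union of single points, hence finite; since $C_{\gamma^n} = h^n(B_{\gamma^n})$, it too is finite. The inclusion $C_{\gamma^n} \subseteq C_{\gamma^{n+1}}$ follows by appending one outermost contraction $\gamma_{\ell}$ to a witnessing pair of length-$n$ words, which preserves both their distinctness and the common image, exhibiting the point in $C_{\gamma^{n+1}}$. I expect the uniqueness step to be the main obstacle—specifically the clean use of Assumption~B(3) to forbid two branch coordinates—whereas the two inclusions, the finiteness, and the monotonicity are direct consequences.
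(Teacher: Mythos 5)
Your proof is correct; note that the paper itself gives no proof of this lemma (it is quoted from \cite{KW5}), so there is nothing internal to compare against. Your argument --- using $h$ as a universal left inverse to force $a_k=b_k$ at every level, so that each index where the two words disagree produces a branched point on the common orbit, and then invoking $B_{\gamma}\cap P_{\gamma}=\emptyset$ to forbid two such indices --- is exactly the mechanism the paper relies on elsewhere (e.g.\ in Lemma~\ref{lem:index} and the proposition on $p$--$q$ orbits), and the remaining inclusions, finiteness, and monotonicity steps are all handled correctly.
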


And  the following lemma holds for $C_{\gamma^n}$:  

\begin{lemma} \cite{KW5}  \label{lem:C_gamma}
Let $a \in K$.  Then $a \in C_{\gamma^n}$ if and only if there exist
 $0 \le p \le n-1$ and $(i_1,i_2,\dots,i_p) \in \Sigma^{p}$ such that 
$\gamma_{i_{p}\circ \cdots \circ  \gamma_{i_1}}(a) \in C_{\gamma}$.  
 \end{lemma}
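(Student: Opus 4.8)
The plan is to prove both implications directly, the engine in each direction being the injectivity of the contractions $\gamma_i$ (immediate from $0 < c'd(x,y) \le d(\gamma_i(x),\gamma_i(y))$, so that $\gamma_i(x)=\gamma_i(y)$ forces $x=y$) together with the rigidity supplied by the last assertion of Lemma \ref{lem:branch}, namely that two distinct length-$n$ words whose compositions agree at a point differ in exactly one coordinate. Throughout I write $\gamma_{(i_1,\dots,i_k)} := \gamma_{i_k}\circ\cdots\circ\gamma_{i_1}$ for a word in $\Sigma^k$ (the empty word giving the identity), so that the claim reads: $a \in C_{\gamma^n}$ if and only if $\gamma_{i_p}\circ\cdots\circ\gamma_{i_1}(a) \in C_\gamma$ for some word of length $0 \le p \le n-1$.

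For the forward implication I would start from $a \in C_{\gamma^n}$, so that there are distinct tuples $(i_1,\dots,i_n) \ne (j_1,\dots,j_n)$ with $\gamma_{i_n}\circ\cdots\circ\gamma_{i_1}(a) = \gamma_{j_n}\circ\cdots\circ\gamma_{j_1}(a)$. By the uniqueness clause of Lemma \ref{lem:branch} these tuples differ at a single coordinate $s$, with $i_q = j_q$ for all $q \ne s$. I set $p := s-1$ and $c := \gamma_{i_{s-1}}\circ\cdots\circ\gamma_{i_1}(a)$ (so $c = a$ when $s=1$), giving $0 \le p \le n-1$. Applying the common prefix, the hypothesis collapses to $\gamma_{i_n}\circ\cdots\circ\gamma_{i_s}(c) = \gamma_{j_n}\circ\cdots\circ\gamma_{j_s}(c)$; since $i_q=j_q$ for $q>s$ the outer factor $G := \gamma_{i_n}\circ\cdots\circ\gamma_{i_{s+1}}$ (the identity when $s=n$) is common to both sides, and cancelling it by injectivity yields $\gamma_{i_s}(c) = \gamma_{j_s}(c)$ with $i_s \ne j_s$, i.e. $c \in C_\gamma$.

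For the converse I would take $c := \gamma_{i_p}\circ\cdots\circ\gamma_{i_1}(a) \in C_\gamma$ for some word of length $p \le n-1$, so by definition of $C_\gamma$ there are indices $i \ne j$ with $\gamma_i(c) = \gamma_j(c)$. The construction is then to build two length-$n$ tuples agreeing at $a$: keep the prefix $(i_1,\dots,i_p)$, place $i$ and $j$ respectively in coordinate $p+1$ (legitimate because $p+1 \le n$), and fill coordinates $p+2,\dots,n$ with one fixed identical tail. Applying both compositions to $a$ produces the common value $c$, then $\gamma_i(c)=\gamma_j(c)$, then the identical tail, so the two values coincide; as the tuples differ in coordinate $p+1$ they are distinct, whence $a \in C_{\gamma^n}$.

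The only genuinely delicate point is the cancellation in the forward direction, and this is exactly where the uniqueness of the differing coordinate in Lemma \ref{lem:branch} is indispensable: without it the tails beyond position $s$ could differ, and injectivity alone would not collapse the identity down to single-step branching in $C_\gamma$. Everything else is bookkeeping of the composition order and the boundary cases $p=0$ (where $c=a$) and $p=n-1$ (where no tail is filled), which I would dispatch as degenerate instances of the two arguments above.
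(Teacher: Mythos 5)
Your proof is correct. Note that the paper itself gives no argument for this lemma---it is quoted from \cite{KW5} without proof---so there is no in-paper proof to compare against; judged on its own, your two implications are complete and sound. The converse is the routine padding construction, and in the forward direction you have correctly located the one non-trivial ingredient: injectivity of the $\gamma_i$ (which follows from the lower contraction bound $c'd(x,y)\le d(\gamma_i(x),\gamma_i(y))$) only lets you cancel a \emph{common} suffix, and if you merely took the largest index $s$ with $i_s\ne j_s$ you would end up with $\gamma_{i_s}(x)=\gamma_{j_s}(y)$ for two possibly different points $x,y$, which is not membership in $C_\gamma$. It is exactly the uniqueness clause of Lemma \ref{lem:branch} (itself a consequence of Assumption B) that forces the two words to share both prefix and suffix, collapsing the identity to $\gamma_{i_s}(c)=\gamma_{j_s}(c)$ at a single point $c$. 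Your appeal to that clause is legitimate since the paper states it, and the boundary cases $p=0$ and $p=n-1$ are handled as you say.
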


We denote by $X_{\gamma^n}$ the $A$-$A$ correspondence for $\gamma^n$.
By Proposition 2.2 in \cite{KW1}, $X_{\gamma}^{\otimes n}$ and $X_{\gamma^n}$
are isomorphic as $A$-$A$ correspondence.  
Using it, we can get similar results for $X_{\gamma}^{\otimes n}$ and  
$Z_{\gamma}^{\otimes n}$.  For example, $X_{\gamma}^{\otimes n}$ is
isomorphic to a closed  submodule $Z_{\gamma^n}$ of $A^{N^n}$.  

For $\gamma^n$, we define a subset $D^{\gamma^n}$ of $M_{N^n}(A)$
as in the case of $\gamma$.  We also consider 
 $C_{\gamma^n}$ instead of $C_{\gamma}$.
We use the same notation $e_b$ for $b \in B_{\gamma^n}$ with $h^n(b)=c$
and $\{\,j(b,k)\,|\,0 \le k \le e_b-1\,\}$
for $\gamma^n$ as in $\gamma$ if there occurs no trouble.  Let 
\begin{align*}
    D^{\gamma^n}
  = \{ \,[a_{ij}]_{ij}\in M_{N^n}(A)  & \,|\,  
\text{ for $c \in C_{\gamma^n}$, $b \in B_{\gamma^n}$ with $h^n(b)=c$, } \\ 
& \text{ $a_{j(b,k),i}(c) = a_{j(b,k'),i}(c), \ \   
 a_{i,j(b,k)}(c) = a_{i, j(b,k')}(c)$  } \\
& \text{ for all $0\le k,k'\le e_b-1$, $0 \le i \le N^n-1$ }
  \, \}.       
\end{align*}

Then $\K(X_{\gamma}^{\otimes n})$ and 
$\K(Z_{\gamma}^{\otimes n})$ are  isomorphic, and $\K(Z_{\gamma}^{\otimes
n})$ is isometrically extended to the subalgebra $D^{\gamma^n}$ of
$Y_{\gamma^n}$.  
As Lemma \ref{lem:finite-rank}, we have the following Proposition:

\begin{prop} \cite{KW5} Let  $\gamma$  be a self-similar map  
on a compact metric space $K$ and 
satisfies Assumption B. Then $\K_0(X_{\gamma}^{\otimes n})$ coincides
with  $\K(X_{\gamma}^{\otimes n})$ and is isomorphic to the  closed
  *subalgebra  $D^{\gamma^n}$ of $M_{N^n}(A)$.
\end{prop}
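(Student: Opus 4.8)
The plan is to reduce the statement to the already-established single-map case, Lemma~\ref{lem:finite-rank}, applied to the composed self-similar map $\gamma^n$. The starting observation is that $\gamma^n = \{\gamma_{i_n}\circ\cdots\circ\gamma_{i_1}\}$ is again a self-similar map on the same space $K$, now with $N^n$ branches, and that by Proposition~2.2 of \cite{KW1} there is an isomorphism of $A$-$A$ correspondences $X_{\gamma}^{\otimes n}\cong X_{\gamma^n}$. This gives $\K(X_{\gamma}^{\otimes n})\cong\K(X_{\gamma^n})$ and $\K_0(X_{\gamma}^{\otimes n})\cong\K_0(X_{\gamma^n})$, so it suffices to prove the three assertions, namely $\K_0(X_{\gamma^n})=\K(X_{\gamma^n})$, the existence of the matrix representation, and its identification with $D^{\gamma^n}$, directly for the single self-similar map $\gamma^n$.

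First I would check that $\gamma^n$ fits the hypotheses of Lemma~\ref{lem:finite-rank}, i.e. that $\gamma^n$ satisfies Assumption~B with $N$, $h$, $B_{\gamma}$, $C_{\gamma}$ replaced by $N^n$, $h^n$, $B_{\gamma^n}$, $C_{\gamma^n}$. Condition~(1) is immediate: iterating $h(\gamma_j(y))=y$ yields $h^n((\gamma_{i_n}\circ\cdots\circ\gamma_{i_1})(y))=y$ for every multi-index, so $h^n$ is a continuous left inverse for each branch of $\gamma^n$. Condition~(2), the finiteness of $B_{\gamma^n}$ and of $C_{\gamma^n}$, is precisely Lemma~\ref{lem:branch}, together with its explicit description of $B_{\gamma^n}$. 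Condition~(3) I would verify by an orbit chase: if a point of $B_{\gamma^n}$ were postcritical for $\gamma^n$, then writing that point as $F(b)$ with $b\in B_{\gamma}$ and $F$ a $\gamma$-word of length $\le n-1$, and its image as $F'(b')$ with $b'\in B_{\gamma}$, one peels off the branches of $F'$ one at a time using $h$ (the branches are injective). Since the total $\gamma$-word on the left is strictly longer than $F'$, this produces a positive-length forward image of $b$ equal to $b'\in B_{\gamma}$, forcing $b\in P_{\gamma}$ and contradicting $B_{\gamma}\cap P_{\gamma}=\emptyset$; Lemma~\ref{lem:C_gamma} controls the occurrence of branch values along the way.

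With $\gamma^n$ recognized as a self-similar map of the same type, the constructions of Section~3 apply verbatim: one forms the free module $Y_{\gamma^n}=A^{N^n}$, the submodule $Z_{\gamma^n}$ cut out by the fiberwise coincidence relations at each $c\in C_{\gamma^n}$, and the subalgebra $D^{\gamma^n}\subset M_{N^n}(A)$. The isomorphism $\varphi$ of Lemma~5 in \cite{KW5} identifies $X_{\gamma^n}$ with $Z_{\gamma^n}$; the restriction map $\delta$ and its inverse identify $\K(Z_{\gamma^n}\subset Y_{\gamma^n})$ with $\K(Z_{\gamma^n})$; and the rank-one formula $\theta^{Y_{\gamma^n}}_{f,g}=[f_i\overline{g}_j]_{ij}$ places these operators inside $D^{\gamma^n}$. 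Running the argument of Lemma~\ref{lem:finite-rank} with these data gives $\K_0(X_{\gamma^n})=\K(X_{\gamma^n})\cong D^{\gamma^n}$, and transporting back along $X_{\gamma}^{\otimes n}\cong X_{\gamma^n}$ yields the Proposition.

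The main obstacle is not the formal reduction but confirming that the branch geometry of $\gamma^n$ is genuinely of the same combinatorial type as that of $\gamma$, so that the single-map machinery transfers without change. The delicate ingredient is the uniqueness clause of Lemma~\ref{lem:branch}: whenever two branches of $\gamma^n$ agree at a point $c$, they differ in exactly one of the $n$ coordinates. This is what guarantees that the coincidence relations defining $Z_{\gamma^n}$ partition the $N^n$ indices into disjoint blocks, so that the fiber is the clean corner $D^{\gamma^n}(c)=Q_{\gamma^n}(c)M_{N^n}(\comp)Q_{\gamma^n}(c)\cong M_{w_c}(\comp)$ rather than some larger or less structured algebra; without it the identification $\K(Z_{\gamma^n}\subset Y_{\gamma^n})=D^{\gamma^n}$ could fail at the branch values.
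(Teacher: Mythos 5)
Your proposal is correct and follows essentially the same route as the paper, which likewise deduces the statement from the single-map case (Lemma~\ref{lem:finite-rank}) applied to the composed system $\gamma^n$ via the isomorphism $X_{\gamma}^{\otimes n}\cong X_{\gamma^n}$ of Proposition~2.2 in \cite{KW1}, together with Lemma~\ref{lem:branch}. Your explicit verification that $\gamma^n$ satisfies Assumption~B (in particular the orbit chase showing $B_{\gamma^n}\cap P_{\gamma^n}=\emptyset$) is a detail the paper leaves implicit, and it is carried out correctly.
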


Let $0 \le r \le n$.  As in the case $r=1$, there exists a matrix 
representation $\Omega^{(r)}$ of  $\K(X_{\gamma}^{\otimes r})$ to
$M_{N^r}(A)$ satisfying 
\[
    \Omega^{(r)}(\theta^{X_{\gamma}^{\otimes r}}_{x,y})
   = \theta^{Y_{\gamma}^{\otimes r}}_{\varphi(x),\varphi(y)}
\]
for $x$, $y \in X_{\gamma}^{\otimes r}$, where $\varphi$ is an
isomorphism from $X_{\gamma}^{\otimes r}$ to $Z_{\gamma}^{\otimes r}$.    

We shall give a matrix representation of the finite core $\F^{(n)}$ in
$M_{N^n}(A)$. Let $n \ge 0$.  Fix $0 \le r \le n$.  Then 
there exist a representation $\Omega^{(n,r)}$ from
$\K(X_{\gamma}^{\otimes r})$ to $\L(Y_{\gamma}^{\otimes n}) =
M_{N^n}(A)$ such that 
\[
  \Omega^{(n,r)}(T) = \Omega^{(r)}(T) \otimes I_{n-r}
\]
for $T \in \K(X_{\gamma}^{\otimes r})$.  

We introduce some useful notations.  We fix $N$.  
For a sequence of indices $(i_1,\dots,i_s) \in \Sigma^{s}$, 
we define a non negative integer $i_1\dots i_s|_{(N,s)}$ by 
$N$-adic expansion 
\[
 i_1\dots i_s|_{(N,s)} = \sum_{j=1}^s i_j N^{s-j}.  
\]
We shall use this $N$-adic expansion as suffixes to refer 
to the entries in a big matrix.

Let $n$ be an integer, $p$ and $q$ be non negative integers with 
$n=p+q$.  For $T \in M_{N^q}(\comp)$ and $0 \le i \le N^{p}-1$, 
define $\pi_{q,i}(T) \in M_{N^n}(\comp)$ by
\[
  (\pi_{q,i}(T))_{a,b} 
     = \begin{cases}  &T_{a-iN^q, b-iN^q}   \quad (iN^q \le
	a,b \le (i+1)N^q -1)
  \\
    & 0 \quad ({\rm otherwise})
       \end{cases}.  
\]
Thus $(\pi_{q,i}(T))_{a,b}$ is a block diagonal matrix such that 
the only $i$-th block is $T$ and the other blocks are $0$. 

For mutually different
integers $0\le i_k \le N^p-1$ $(k=1,\dots,r)$, define 
a block diagonal matrix such that 
the  $i_1, \dots, i_r $-th blocks are the same $T$ 
and the other blocks are $0$ by 

\[
 \pi_{q,i_1,\dots, i_r}(T) = \sum_{k=1}^r \pi_{q,i_k}(T).  
\]

Let $0 \le i,j \le N^p-1$, $i \ne j$.  We also define $\sigma_{q,i,j}(T)
\in M_{N^n}(\comp)$ by 
\[
 (\sigma_{q,i,j}(T))_{a,b}
    = 
\begin{cases} 
  &  T_{a - iN^q,b -iN^q}  \quad  iN^q \le a,b \le (i+1)N^{q}-1 \\ 
  &  T_{a - iN^q,b -jN^q } \quad  iN^q \le a \le (i+1)N^{q}-1, \,\,
                   jN^q \le b \le (j+1)N^{q}-1      \\
  &  T_{a  -jN^q,b-iN^q}  \quad jN^q \le a \le (j+1)N^{q}-1,\,\,
                   iN^q \le b \le (i+1)N^{q}-1 \\
  &  T_{a  -jN^q,b-jN^q}  \quad jN^q \le a,b \le (j+1)N^{q}-1 \\
  & 0 \quad {\rm otherwise}.  
\end{cases}
\]

Thus $(\sigma_{q,i,j}(T))_{a,b}$ is a a block matrix such that 
the  $i-i, i-j, j-i, j-j $-th blocks are the same $T$ 
and the other blocks are $0$. 

By Lemma 15 in \cite{KW5}, the natural embedding
\[
 \L(Y_{\gamma}^{\otimes r}) \ni T \mapsto 
T \otimes I_{n-r} \in \L(Y_{\gamma}^{\otimes n}) ={\rm C}(K,M_{N^n}(\comp))
\]
has a matrix representation as a block diagonal matrix for each 
fiber at ${\rm P} \in K$: 
\[
(T \otimes  I_{n-r})({\rm P})
 =  \sum_{(i_1,\dots,i_{n-r}) \in \Sigma^{n-r}}
  \pi_{r,i_1 \dots i_{n-r}}|_{(N,n-r)}(T((\gamma_{i_{n-r}}\circ \cdots
 \circ \gamma_{i_1})({\rm P}))),  
\]  
where $T((\gamma_{i_{n-r}}\circ \cdots \circ \gamma_{i_1})
({\rm P})) \in M_{N^r}(\comp)$
is a fiber of $T$ at 
$(\gamma_{i_{n-r}}\circ \cdots \circ \gamma_{i_1})({\rm P})$ . 
Therefore $(T \otimes  I_{n-r})({\rm P})$ is a block diagonal 
matrix by these fibers. 

Therefore we can describe the form of 
\[
\Omega^{(n,r)} : \K(X_{\gamma}^{\otimes r}) 
\rightarrow \L(Y_{\gamma}^{\otimes n}) = M_{N^n}(A).  
\]
For $T\in \K(X_{\gamma}^{\otimes r})$ $(0 \le r
\le n-1)$, $\Omega^{(n,r)}(T)$ is expressed as:
\[
  \Omega^{(n,r)}(T)({\rm P}) =
  \sum_{(i_1,\dots,i_{n-r}) \in \Sigma^{n-r}}
  \pi_{r,i_1 \dots
  i_{n-r}}|_{(N,n-r)}(\Omega^{(r)}(T)((\gamma_{i_{n-r}}\circ \cdots
 \circ \gamma_{i_1})({\rm P}))
\]
for each ${\rm P} \in K$.  

As in \cite{KW5}, we have  the following Theorem:  

\begin{thm}  \cite{KW5} {\bf (matrix representation of the finite
 core)}
Let  $\gamma$  be a self-similar map  on a compact metric space $K$ and 
satisfies Assumption B. Then there exists an isometric $*$-homomorphism 
$\Pi^{(n)} :  \F^{(n)} \rightarrow M_{N^n}(A)$ such that,  
for $T=  \sum_{r=0}^{n}T_r \otimes I_{n-r} \in \F^{(n)}$ with  
$T_r \in \K(X_{\gamma}^{\otimes r})$,
\[
\Pi^{(n)}(T) =  \sum_{r=0}^n \Omega^{(n,r)}(T_r), 
\]
and if we identify $X_{\gamma}^{\otimes r}$ with 
$Z_{\gamma}^{\otimes r}$, then 
\[
\Omega^{(n,r)}(\theta^{Z_{{\gamma}^r}}_{x,y})
= \theta^{Y_{{\gamma}^r}}_{x,y} \otimes I_{n-r}. 
\]
The image $\Pi^{(n)}(T)$ is independent of the expression of $T=
\sum_{r=0}^{n}T_r \otimes I_{n-r} \in \F^{(n)}$. 

  Moreover the following diagram commutes:
\[
  \begin{CD}
   \F^{(n)} @> {\Pi^{(n)}}>>
    M_{N^n}(A)  \\
   @VVV @VVV \\
   \F^{(n+1)}  @>{\Pi^{(n+1)}}>>  M_{N^{n+1}}(A).
  \end{CD}
\]
In particular the core 
 $\F^{(\infty)}$ is represented in the $M_{N^{\infty}}\otimes A$ as a \cst
-subalgebra.
\end{thm}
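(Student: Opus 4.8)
The plan is to read $\Pi^{(n)}$ as the $*$-homomorphism of finite cores induced by the inclusion of $A$-$A$ correspondences $Z_\gamma\hookrightarrow Y_\gamma$. Because $Y_\gamma=A^N$ is finitely generated projective, $\K(Y_\gamma^{\otimes n})=\L(Y_\gamma^{\otimes n})=M_{N^n}(A)$, so the target is itself the full finite $n$-th core of the $Y_\gamma$-correspondence and already contains every $M_{N^r}(A)\otimes I_{n-r}$. The building blocks are the maps $\Omega^{(r)}\colon\K(X_\gamma^{\otimes r})\to M_{N^r}(A)$, which by the Proposition are isometric $*$-homomorphisms with image $D^{\gamma^r}$; amplifying by the identity gives injective $*$-homomorphisms $\Omega^{(n,r)}(T)=\Omega^{(r)}(T)\otimes I_{n-r}$ whose fibers are the block-diagonal matrices recorded in the displayed formula. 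It then remains to check that the prescription $\Pi^{(n)}(\sum_r T_r\otimes I_{n-r})=\sum_r\Omega^{(n,r)}(T_r)$ is (i) independent of the decomposition, (ii) multiplicative and $*$-preserving, and (iii) isometric, and finally to establish the square and pass to the limit.

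For (i) I would argue by induction on $n$, the case $n=0$ being the identity $A\to M_1(A)$. The sum $\F^{(n)}=\sum_{r=0}^n\K(X_\gamma^{\otimes r})\otimes I_{n-r}$ is not direct, the overlaps being governed by which $T\otimes I_{X_\gamma}$ become compact one level up; accordingly, assume $\sum_{r=0}^n S_r\otimes I_{n-r}=0$ and put $U=\sum_{r=0}^{n-1}S_r\otimes I_{(n-1)-r}\in\F^{(n-1)}$, so that $U\otimes I_{X_\gamma}=-S_n\in\K(X_\gamma^{\otimes n})$. Unwinding the amplifications and using the inductive hypothesis gives
\[
\sum_{r=0}^n\Omega^{(n,r)}(S_r)=\Pi^{(n-1)}(U)\otimes I_{Y_\gamma}+\Omega^{(n)}(S_n),
\]
so everything reduces to the single compatibility
\[
\Omega^{(n)}(U\otimes I_{X_\gamma})=\Pi^{(n-1)}(U)\otimes I_{Y_\gamma}\qquad(U\in\F^{(n-1)},\ U\otimes I_{X_\gamma}\in\K(X_\gamma^{\otimes n})),
\]
which is exactly the compact case of the square to be proved. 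I expect this identity to be the main obstacle: both sides lie in $M_{N^n}(A)=C(K,M_{N^n}(\comp))$, and one must match them fiber by fiber, expressing $\Omega^{(n)}(U\otimes I_{X_\gamma})$ through the $\gamma^n$-matrix representation and the right-hand side through the block-diagonal formula $\sum_{(i_1,\dots)}\pi_{r,\cdots}(\cdots)$; the bookkeeping of the blocks over the fibers at points of $C_{\gamma^n}$ is controlled by the description of $B_{\gamma^n}$ and $C_{\gamma^n}$ in Lemma \ref{lem:branch} and Lemma \ref{lem:C_gamma}, where Assumption B is used.

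Granting (i), property (ii) follows by reducing a product of two generators to a single top level: for $x,y\in X_\gamma^{\otimes r}$ and $u,v\in X_\gamma^{\otimes s}$ with $r\le s$ one has $(\theta^{X_\gamma^{\otimes r}}_{x,y}\otimes I_{s-r})\theta^{X_\gamma^{\otimes s}}_{u,v}\in\K(X_\gamma^{\otimes s})$, so the product again lies in the range of $\Omega^{(n,s)}$, and the identity $\Pi^{(n)}(ST)=\Pi^{(n)}(S)\Pi^{(n)}(T)$ is read off from the fact that $\Omega^{(s)}$ is a $*$-homomorphism together with the amplification rule; $*$-invariance is immediate from $\theta^*_{x,y}=\theta_{y,x}$. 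For (iii) I would note that, on each generator, the restriction of $\Pi^{(n)}(T)$ to the submodule $Z_\gamma^{\otimes n}\subset Y_\gamma^{\otimes n}$ recovers $T$ under the identification $X_\gamma^{\otimes n}\cong Z_\gamma^{\otimes n}$: indeed $\delta$ sends $\theta^{Y_\gamma^{\otimes r}}_{x,y}$ to $\theta^{Z_\gamma^{\otimes r}}_{x,y}$ and the left action on $Y_\gamma^{\otimes n}$ restricts to that on $Z_\gamma^{\otimes n}$. Hence $\Pi^{(n)}(T)=0$ forces $T=0$, so $\Pi^{(n)}$ is injective and therefore isometric.

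Finally, once $\Pi^{(n)}$ and $\Pi^{(n-1)}$ are known to be well-defined $*$-homomorphisms, the full square follows from the same amplification bookkeeping as in (i) but now without the compactness restriction: for $T=\sum_r T_r\otimes I_{n-r}\in\F^{(n)}$ one has $T\otimes I_{X_\gamma}=\sum_r T_r\otimes I_{(n+1)-r}$, whence $\Pi^{(n+1)}(T\otimes I_{X_\gamma})=\sum_r\Omega^{(r)}(T_r)\otimes I_{(n+1)-r}=\Pi^{(n)}(T)\otimes I_{Y_\gamma}$, where on the matrix side $B\mapsto B\otimes I_{Y_\gamma}$ is the block-diagonal embedding of Lemma 15 in \cite{KW5}. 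Since every square commutes and each $\Pi^{(n)}$ is an isometric embedding, the universal property of inductive limits yields an isometric $*$-homomorphism $\Pi^{(\infty)}=\varinjlim\Pi^{(n)}$ of $\F^{(\infty)}=\varinjlim\F^{(n)}$ into $\varinjlim_n M_{N^n}(A)$, which realizes the core as a \cst-subalgebra of $M_{N^\infty}\otimes A$.
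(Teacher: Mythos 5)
Your architecture is the one the paper intends: the paper itself offers no proof of this theorem (it is quoted from \cite{KW5}), but Section 3 assembles exactly the ingredients you invoke, namely $\Omega^{(r)}$ with range $D^{\gamma^r}$ (Lemma \ref{lem:finite-rank}), the restriction isomorphism $\delta:\K(Z_{\gamma}\subset Y_{\gamma})\to\K(Z_{\gamma})$, and the block-diagonal amplification $T\mapsto T\otimes I_{n-r}$ from Lemma 15 of \cite{KW5}. Your reduction of well-definedness to the single compatibility $\Omega^{(n)}(U\otimes I_{X_\gamma})=\Pi^{(n-1)}(U)\otimes I_{Y_\gamma}$ for $U\in\F^{(n-1)}$ with $U\otimes I_{X_\gamma}\in\K(X_\gamma^{\otimes n})$ is correct, and the arguments you give for multiplicativity, for injectivity (hence isometry, an injective $*$-homomorphism of $C^*$-algebras being automatically isometric), and for the commuting square and the passage to the inductive limit are all sound \emph{once that compatibility is granted}.

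The gap is that you explicitly decline to prove the compatibility, and it is not a formality --- it is the one place where the theorem could fail and where Assumption B actually enters. The natural route is: both sides restrict to the same operator on the submodule $Z_{\gamma^n}\cong X_\gamma^{\otimes n}$ of $Y_\gamma^{\otimes n}$, and $\delta$ is an isomorphism of $D^{\gamma^n}=\K(Z_{\gamma^n}\subset Y_{\gamma^n})$ onto $\K(Z_{\gamma^n})$, so the two sides coincide \emph{provided} one first shows that $\Pi^{(n-1)}(U)\otimes I_{Y_\gamma}$ actually lies in $D^{\gamma^n}$; agreement on the submodule alone does not force equality in $M_{N^n}(A)$ (compare $Q_{\gamma^n}(c)$ with $I$ at a fiber). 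That membership is the real content: at a fiber $c\in C_{\gamma^n}$ the amplification is block diagonal with blocks $\Pi^{(n-1)}(U)(\gamma_i(c))$, and one must verify the row/column matching conditions indexed by the branched points $b\in B_{\gamma^n}$ with $h^n(b)=c$, using the hypothesis that $U\otimes I_{X_\gamma}$ is compact together with the description of $B_{\gamma^n}$ and $C_{\gamma^n}$ in Lemmas \ref{lem:branch} and \ref{lem:C_gamma}. Until this fiberwise verification is carried out, $\Pi^{(n)}$ is not known to be well defined, and everything downstream of step (i) is conditional; so this step should be executed rather than described.
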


We note that for $T \in \K(X^{\otimes r})$ it holds that 
\[
 \Pi^{(n)}(T)({\rm P}) = \sum_{(i_1,\dots,i_{n-r}) \in \Sigma^{n-r}}
 \pi_{r,i_1 \dots i_{n-r}}|_{(N,n-r)}(\Pi^{(r)}(T))((\gamma_{i_{n-r}}\circ \cdots
 \circ \gamma_{i_{1}})({\rm P})).  
\]
Hence  $\Pi^{(n)}(T)({\rm P})$ is a block diagonal matrix consisting of 
$\Pi^{(r)}(T))((\gamma_{i_{n-r}}\circ \cdots \circ \gamma_{i_{1}})({\rm P})$.

\section
{Explicit form of finite cores in terms of singularity}
\subsection
{General calculation}
Let ${\bf \gamma} = (\gamma_0,\dots,\gamma_{N-1})$ be a 
self-similar map on $K$ satisfying the assumption B.  
We shall describe the structure of 
$\Pi^{(n)}(\F^{(n)})({\rm P})$ for ${\rm P} \in
P_{\gamma}$.  Let $0 \le q \le n$, and $p=n-q$.  
The elements of $\Pi^{(n)}(\K(X^{\otimes q}))({\rm P})$ are represented as
block diagonal matrices with blocks which consists  
of matrices in $M_{N^q}(\comp)$.  

\begin{lemma} \label{lem:index}
Let ${\rm P} \in K$.  Assume that $\gamma_{i_k}\circ \cdots \circ
 \gamma_{i_1}({\rm P}) \notin B_{\gamma}$ (for any $k=1,\dots,p-1$).  
If 
\[
\gamma_{i_{p-1}}\circ \gamma_{i_{p-2}}\circ \cdots \circ
 \gamma_{i_1}({\rm P}) = \gamma_{j_{p-1}}\circ \gamma_{j_{p-2}}\circ \cdots \circ
  \gamma_{j_1}({\rm P}),  
\]
then it holds $(i_1,i_2,\dots,i_{p-1})=(j_1,i_2,\dots,j_{p-1})$.  
\end{lemma}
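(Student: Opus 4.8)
The plan is to exploit the retraction $h$ from part (1) of Assumption B, which satisfies $h(\gamma_j(y)) = y$ for every $y \in K$ and every $j \in \Sigma$. From this single identity I would extract two elementary facts. First, applying $h$ to any equality $\gamma_a(x) = \gamma_b(y)$ forces $x = h(\gamma_a(x)) = h(\gamma_b(y)) = y$, regardless of whether $a = b$; so the footpoints must agree. Second, if $\gamma_a(x) = \gamma_b(x)$ holds with $a \ne b$, then by the very definition of $B_{\gamma}$ the common value is a branched point, i.e. $\gamma_a(x) \in B_{\gamma}$.

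First I would introduce the partial orbits $x_k := \gamma_{i_k} \circ \cdots \circ \gamma_{i_1}({\rm P})$ and $y_k := \gamma_{j_k} \circ \cdots \circ \gamma_{j_1}({\rm P})$ for $0 \le k \le p-1$, with the convention $x_0 = y_0 = {\rm P}$; the hypothesis is exactly $x_{p-1} = y_{p-1}$, and the assumption reads $x_k \notin B_{\gamma}$ for $1 \le k \le p-1$. The first step is a downward induction: applying $h$ to $x_{p-1} = y_{p-1}$ gives $x_{p-2} = h(x_{p-1}) = h(y_{p-1}) = y_{p-2}$, and iterating $h$ shows $x_k = y_k$ for all $0 \le k \le p-1$. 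Thus the whole trajectories coincide, not merely their endpoints.

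The second step recovers the indices. Fix $k$ with $1 \le k \le p-1$. Since $x_{k-1} = y_{k-1}$ and $x_k = y_k$, I get $\gamma_{i_k}(x_{k-1}) = x_k = y_k = \gamma_{j_k}(x_{k-1})$, an equality of the form $\gamma_{i_k}(x_{k-1}) = \gamma_{j_k}(x_{k-1})$ at one and the same footpoint. If $i_k \ne j_k$, the second elementary fact would force $x_k \in B_{\gamma}$, contradicting the hypothesis $x_k \notin B_{\gamma}$; hence $i_k = j_k$. Running this over all $k$ yields $(i_1,\dots,i_{p-1}) = (j_1,\dots,j_{p-1})$, which is the claim.

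The argument is short and I do not expect a serious obstacle; the only point requiring care is the order of the two steps. One cannot read off $i_{p-1} = j_{p-1}$ directly from $x_{p-1} = y_{p-1}$, because a priori the two outermost maps are applied to different footpoints $x_{p-2}$ and $y_{p-2}$; it is essential to first propagate the equality of points down the orbit via $h$, and only then compare indices level by level. This local statement may be viewed as the single-orbit refinement of the uniqueness assertion in Lemma \ref{lem:branch}, the no-branched-point hypothesis being precisely what rules out the branching that Lemma \ref{lem:branch} otherwise permits.
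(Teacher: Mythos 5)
Your proof is correct and follows the route the paper intends: the paper's entire proof is the one line ``It is clear from the definition of $B_{\gamma}$,'' and your two steps (propagating $x_k=y_k$ backwards via the retraction $h$, then concluding $i_k=j_k$ since otherwise $x_k\in B_{\gamma}$ by the definition of the branch set) are exactly the details being left implicit. Nothing further is needed.
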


\begin{proof} It is clear fronm the definition of $B_{\gamma}$. 
\end{proof}

\begin{defn} For a point ${\rm R} \in K$ and 
non-negative integers $p$ and $q$, the $p-q$ orbit $Orbit_{p-q}({\rm R})$ 
through  $R$ is defined by 

\begin{align*}
Orbit_{p-q}({\rm R})  
:=  \{ {\rm R}_0 \overset{\gamma_{i_1}}\to {\rm R}_1 
\overset{\gamma_{i_2}}\to {\rm R}_2 \dots \overset{\gamma_{i_{p+q}}}\to 
{\rm R}_{p+q}\ | 
& \ ({\rm R}_0, \dots, {\rm R}_{p+q}) \in K^{p+q+1},\\
& (i_1,i_2,\dots,i_{p+q})\in \Sigma^{p+q}, \ \gamma_{i_p}
({\rm R}_0) = {\rm R} \}
\end{align*}
\end{defn} 

The following proposition shows that $p-q$ orbit $Orbit_{p-q}({\rm R})$ 
through  ${\rm R}$ representes  the branched points structure. And this will 
make the explicit computation of the finite core possible. 

\begin{prop}Suppose the Assmption B . 
For non-negative integers $p$ and $q$, if ${\rm R}$ is a  branched point, 
then any element in the $p-q$ orbit $Orbit_{p-q}({\rm R})$ 
through  ${\rm R}$ satisfies the following: \\
The points ${\rm R}_0,{\rm R}_1, \dots, {\rm R}_{p-1}$ and a finite word 
$(i_1,\dots,i_{p-1})$ are uniquely determined, and 
$i_p$ is $j({\rm R},0),j({\rm R},1), \dots$  or $j({\rm R},{e_{\rm R}-1})$. 
Moreover the number of $Orbit_{p-q}({\rm R})$ is exactly $e_{\rm R} N^q$. 
\end{prop}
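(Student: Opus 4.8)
The plan is to recover each labelled path in $Orbit_{p-q}({\rm R})$ from the single datum that the branched point ${\rm R}$ sits at its $p$-th vertex, using the left inverse $h$ to reconstruct the backward part uniquely and then counting the forward continuations freely. I read an element of $Orbit_{p-q}({\rm R})$ as a labelled path with ${\rm R}_k = \gamma_{i_k}({\rm R}_{k-1})$ for $1 \le k \le p+q$ and ${\rm R}_p = {\rm R}$, so that two such elements coincide precisely when all of their vertices and all of their labels agree.

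For the backward part I would first apply property (1) of Assumption B repeatedly: from ${\rm R}_k = \gamma_{i_k}({\rm R}_{k-1})$ we get ${\rm R}_{k-1} = h({\rm R}_k)$, hence ${\rm R}_{p-m} = h^m({\rm R})$ for $1 \le m \le p$. This already pins down ${\rm R}_0, \dots, {\rm R}_{p-1}$ uniquely in terms of ${\rm R}$. The uniqueness of the labels $i_1, \dots, i_{p-1}$ is the step where condition (3) of Assumption B does the real work. The key claim is that ${\rm R}_k \notin B_{\gamma}$ for $0 \le k \le p-1$: indeed $\gamma_{i_p} \circ \cdots \circ \gamma_{i_{k+1}}({\rm R}_k) = {\rm R} \in B_{\gamma}$ uses $p-k \ge 1$ maps, so ${\rm R}_k \in P_{\gamma}$, and if ${\rm R}_k$ were also a branched point this would contradict $B_{\gamma} \cap P_{\gamma} = \emptyset$. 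Granting ${\rm R}_k \notin B_{\gamma}$, the relation ${\rm R}_k = \gamma_{i_k}(h({\rm R}_k))$ admits a unique index $i_k$, since a second index would exhibit ${\rm R}_k$ as a branched point; thus each $i_k$ with $1 \le k \le p-1$ is determined. (Equivalently, this uniqueness is Lemma \ref{lem:index} applied along the backward orbit.) I expect this non-branchedness claim to be the main obstacle, as it is exactly the point where the hypothesis that ${\rm R}$ is branched interacts with the postcritical condition; everything else is bookkeeping.

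Finally I would handle the $p$-th vertex and the forward tail to get the count. At the branch, ${\rm R} = \gamma_{i_p}(h({\rm R}))$ with ${\rm R} \in B_{\gamma}$, so by the definition of the branch index the label $i_p$ ranges over exactly $\{j({\rm R},0), \dots, j({\rm R},e_{\rm R}-1)\}$, giving $e_{\rm R}$ admissible values. For the tail, the labels $(i_{p+1}, \dots, i_{p+q}) \in \Sigma^q$ are unconstrained and determine ${\rm R}_{p+1}, \dots, {\rm R}_{p+q}$ by forward iteration, giving $N^q$ continuations. Since the backward data are fixed and distinct index sequences yield distinct labelled paths, these choices are independent, and the map sending a path to $(i_p, i_{p+1}, \dots, i_{p+q})$ is a bijection onto $\{j({\rm R},0), \dots, j({\rm R},e_{\rm R}-1)\} \times \Sigma^q$. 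Hence $Orbit_{p-q}({\rm R})$ has exactly $e_{\rm R} N^q$ elements, as claimed.
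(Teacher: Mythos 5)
Your proof is correct and follows essentially the same route as the paper's (which simply invokes $h$ to recover ${\rm R}_k = h^{p-k}({\rm R})$ and Lemma \ref{lem:index} for the uniqueness of $(i_1,\dots,i_{p-1})$, then counts the $e_{\rm R}$ choices at the branch and the $N^q$ free continuations). Your explicit verification that ${\rm R}_k \notin B_{\gamma}$ for $k < p$ via $B_{\gamma} \cap P_{\gamma} = \emptyset$ is exactly the hypothesis needed to apply that lemma, a step the paper leaves implicit.
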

\begin{proof}
By the Assmption B, ${\rm R}_k = h^k({\rm R})$ for $k =0,1,\dots,p-1$ and 
$(i_1,\dots,i_{p-1})$ is uniquely determined by Lemma \ref{lem:index}. Then 
the rest is clear. 
\end{proof}

We write the above situation by the following picture: 

\[
{\rm P}={\rm R}_0 \overset{\gamma_{i_1}}\to {\rm R}_1 
\overset{\gamma_{i_2}}\to {\rm R}_2 \dots 
\overset{\gamma_{i_{p-1}}}\to {\rm R}_{p-1}  
\overset{\gamma_{i_{j({\rm R},0)}},\gamma_{i_{j({\rm R},1)}}, \dots, 
\gamma_{i_{j({\rm R},e_{\rm R}-1)}}}
\Longrightarrow {\rm R}_p ={\rm R} 
\begin{matrix}
\ & \nearrow \\
\ & ...      \\
\ & \searrow
\end{matrix}
\begin{matrix}
\ & \nearrow \\
\ & ...      \\
\ & \searrow
\end{matrix}
\]

Let ${\rm P}$ be a point of $K$ and ${\rm R}$ be a branched point. 
Fix a natural number $n$. Then 
the $p-q$ orbit $Orbit_{p-q}({\rm R})$ through  ${\rm R}$
suggests us an important subalgebra $\tilde{{\rm C}}({\rm P},{\rm R},p)$
 of $\Pi^{(n)}(\F^{(n)})({\rm P})$,  
which represents  a building block of the finite core. 

\begin{defn}
Let ${\rm P}$ be a point of $K$ and ${\rm R}$ be a branched point. 
Fix a natural number $n$. Let $p$ be a non-negative integer with 
$h^p({\rm R}) = {\rm P}$ and $q = n-p$. 
Define a subalgebra $\tilde{{\rm C}}({\rm P},{\rm R},p)$ of $\Pi^{(n)}(\F^{(n)})({\rm P})$ by
\begin{align*}
    & \tilde{{\rm C}}({\rm P},{\rm R},p) \\
  = &\{\, \pi_{q,i_1\dots i_{p-1}j({\rm R},0)|_{(N,p)}, \,
i_1\dots i_{p-1}j({\rm R},1)|_{(N,p)},\, \dots, \, 
i_1\dots i_{p-1}j({\rm R},e_{\rm R}-1)|_{(N,p)}
}(A)\,|\, A \in M_{N^{q}}(\comp)\,\} \\
 = & \{\,
\sum_{0 \le l \le e_{\rm R}-1} 
\pi_{q,i_1\dots i_{p-1}j({\rm R},l)|_{(N,p)}}
(A)\,|\, A \in M_{N^{q}}(\comp) 
\,\}.  
\end{align*}
Any operator $T = \sum_{0 \le l \le e_{\rm R}-1} 
\pi_{q,i_1\dots i_{p-1}j({\rm R},l)|_{(N,p)}}
(A)$ in the subalgebra $\tilde{{\rm C}}({\rm P},{\rm R},p)$
is represented as a block diagonal matrix consisting of 
$A \in M_{N^{q}}(\comp)$ 
such that $A$ appears with multiplicity $e_{\rm R}$ 
in the diagonal positions of $i_1\dots i_{p-1} i_p|_{(N,p)}$ with 
$i_p = j({\rm R},0),j({\rm R},1), \dots, j({\rm R},{e_{\rm R}-1})$.
Therefore the subalgebra $\tilde{{\rm C}}({\rm P},{\rm R},p)$ 
can be read off from the singularity structure of branched points. 
\end{defn}

If we fix $n$, then 
the algebra $\tilde{\rm C}({\rm P},{\rm R},p)$ is uniquely determined by
${\rm P} \in K, {\rm R} \in
B_{\gamma}$ and $p$.  Moreover $\tilde{\rm C}({\rm P},{\rm R},p)$ is isomorphic to 
$M_{N^q}(\comp)$ as a \cst -algebra with $q = n-p$.  

The following Proposition characterizes matrices in
$\Pi^{(m)}(\K(X^{\otimes m}))({\rm P})$.  

\begin{prop} Let $m \ge 1$ be an integer.  
If ${\rm P} \notin C_{\gamma^m}$, then $\Pi^{(m)}(\K(X^{\otimes m}))({\rm P})
 =M_{N^m}(\comp)$. If ${\rm P} \in C_{\gamma^m}$, then  
a matrix $T \in M_{N^m}(\comp)$ is contained in $\Pi^{(m)}(\K(X^{\otimes m}))({\rm P})$
if and only if $T$ satisfies the following: \\
For any $1 \le p \le m$ and ${\rm R} \in B_{\gamma}$ with  $h^p({\rm R})={\rm P}$, take  $(i_1,\dots,i_{p-1}) \in \Sigma^{p-1}$, which is uniquely determined   as above. 
\[
T_{i_1\dots i_{p-1} j({\rm R},t) \tilde{i}_{p+1}\dots
 \tilde{i}_m|_{(N,m)}, 
\overline{j}_1\overline{j}_2 \dots \overline{j}_m{}|_{(N,m)}}
=  T_{i_1\dots i_{p-1} j({\rm R},t') \tilde{i}_{p+1}\dots
 \tilde{i}_m|_{(N,m)}, 
\overline{j}_1\overline{j}_2 \dots \overline{j}_m{}|_{(N,m)}}
\]
\[
T_{\overline{i}_1 \overline{i}_2 \dots \overline{i}_{m}|_{(N,m)}, 
 i_1 \dots i_{p-1} j({\rm R},s) \tilde{j}_{p+1}\dots
 \tilde{j}_{m}|_{(N,m)}}  
 =  T_{\overline{i}_1 \overline{i}_2 \dots \overline{i}_{m}|_{(N,m)}, 
 i_1 \dots i_{p-1} j({\rm R},s') \tilde{j}_{p+1}\dots 
 \tilde{j}_{m}|_{(N,m)}}, 
\]

for arbitrary $0 \le t,t',s,s' \le e_{\rm R}-1$ and arbitrary 
$(\tilde{i}_{p+1},\dots,\tilde{i}_{m})$, 
$(\tilde{j}_{p+1},\dots,\tilde{j}_{m}) \in \Sigma^{m-p}$, 
$(\overline{i}_1,\dots,\overline{i}_m)$,
$(\overline{j}_1,\dots,\overline{j}_m) \in \Sigma^{m}$. \\
Moreover
\[
\Pi^{(m)}(\K(X^{\otimes m}))({\rm P}) 
= Q_{\gamma^m}({\rm P})M_{N^m}(\comp)Q_{\gamma^m}({\rm P}), 
\]
where $Q_{\gamma^m}({\rm P}) \in M_{N^m}(\comp)$ is the projection 
onto the subspace $Z_{\gamma^m}({\rm P}) \subset {\mathbb C}^{N^m}$. 
 \end{prop}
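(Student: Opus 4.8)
The plan is to reduce the statement to computing the fibre at ${\rm P}$ of the image algebra $D^{\gamma^m}$, and then to rewrite the defining relations of $D^{\gamma^m}$, which are phrased through $B_{\gamma^m}$, in terms of the branched-point data of $\gamma$ itself. Concretely, taking $n=r=m$ in the matrix representation theorem gives $\Omega^{(m,m)}(T)=\Omega^{(m)}(T)$ (the tensor factor $I_{m-r}$ being trivial for $r=m$), so $\Pi^{(m)}$ agrees with $\Omega^{(m)}$ on $\K(X^{\otimes m})$; by the quoted Proposition of \cite{KW5} the range of $\Omega^{(m)}$ is exactly $D^{\gamma^m}$. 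Hence $\Pi^{(m)}(\K(X^{\otimes m}))({\rm P})=D^{\gamma^m}({\rm P})$, and the whole statement reduces to identifying this fibre.

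Next I would compute $D^{\gamma^m}({\rm P})$ by a bump-function argument, using that $C_{\gamma^m}$ is a finite, hence closed and discrete, set by Lemma \ref{lem:branch}. Given any $T\in M_{N^m}(\comp)$ that satisfies the pointwise relations the definition of $D^{\gamma^m}$ imposes at ${\rm P}$ (there are none if ${\rm P}\notin C_{\gamma^m}$), pick by Urysohn a continuous $\chi\colon K\to[0,1]$ with $\chi({\rm P})=1$ and $\chi\equiv 0$ on $C_{\gamma^m}\setminus\{{\rm P}\}$ (on all of $C_{\gamma^m}$ when ${\rm P}\notin C_{\gamma^m}$). Then $y\mapsto\chi(y)T$ lies in $D^{\gamma^m}$ and takes the value $T$ at ${\rm P}$ and $0$ at the remaining points of $C_{\gamma^m}$; this shows $D^{\gamma^m}({\rm P})$ is precisely the set of matrices obeying those relations, and equals all of $M_{N^m}(\comp)$ when ${\rm P}\notin C_{\gamma^m}$.

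The combinatorial heart, and the step I expect to be the main obstacle, is to match the relations coming from $B_{\gamma^m}$ with those in the statement. A branched point $b\in B_{\gamma^m}$ with $h^m(b)={\rm P}$ arises, via Lemma \ref{lem:branch} and the isomorphism $X^{\otimes m}\cong X_{\gamma^m}$, from some ${\rm R}\in B_{\gamma}$ with $h^p({\rm R})={\rm P}$ for a unique depth $1\le p\le m$. Assumption B(3) guarantees the orbit ${\rm P}={\rm R}_0\to\cdots\to{\rm R}_{p-1}\to{\rm R}_p={\rm R}$ meets $B_{\gamma}$ only at ${\rm R}$, so by Lemma \ref{lem:index} the prefix $(i_1,\dots,i_{p-1})$ carrying ${\rm P}$ to $h({\rm R})$ is uniquely determined, the branching step $i_p$ runs through $j({\rm R},0),\dots,j({\rm R},e_{\rm R}-1)$, and the tail $(\tilde i_{p+1},\dots,\tilde i_m)$ is shared by all indices with $\gamma^m$-image $b$. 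Thus the $\gamma^m$-branch class of $b$ is exactly $\{\,i_1\dots i_{p-1}\,j({\rm R},l)\,\tilde i_{p+1}\dots\tilde i_m|_{(N,m)}:0\le l\le e_{\rm R}-1\,\}$, and letting $({\rm R},p,\tilde i)$ vary reproduces precisely the families of equal rows and columns displayed in the proposition; Lemma \ref{lem:C_gamma} supplies the matching equivalence ${\rm P}\in C_{\gamma^m}\iff$ some such $({\rm R},p)$ exists. Keeping the unique prefix, the branching step, and the free tail correctly aligned across these identifications is where the care is needed.

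Finally I would identify the constrained matrices with $Q_{\gamma^m}({\rm P})M_{N^m}(\comp)Q_{\gamma^m}({\rm P})$ by linear algebra. The fibre $W=Z_{\gamma^m}({\rm P})$ consists of vectors constant on each $\gamma^m$-branch class at ${\rm P}$, so $W^{\perp}$ is spanned by the differences $e_{i}-e_{i'}$ over pairs $i,i'$ lying in a common class. The column relations say $T(e_i-e_{i'})=0$, i.e. $TQ_{\gamma^m}({\rm P})^{\perp}=0$ and so $T=TQ_{\gamma^m}({\rm P})$; the row relations say $T=Q_{\gamma^m}({\rm P})T$; together these are equivalent to $T=Q_{\gamma^m}({\rm P})TQ_{\gamma^m}({\rm P})$, which is the asserted description of the fibre.
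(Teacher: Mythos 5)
Your proposal is correct and follows exactly the route the paper intends: the paper's own proof is the single sentence ``It follows from the form of $D^{\gamma^m}$ and the description of $B_{\gamma^m}$,'' and your argument simply fills in those steps --- identifying the fibre of $D^{\gamma^m}$ at ${\rm P}$ via a bump function over the finite set $C_{\gamma^m}$, translating the $B_{\gamma^m}$-relations into the $({\rm R},p,\tilde{i})$ data using Lemmas \ref{lem:branch}, \ref{lem:C_gamma}, \ref{lem:index} and Assumption B(3), and recognizing the constrained matrices as $Q_{\gamma^m}({\rm P})M_{N^m}(\comp)Q_{\gamma^m}({\rm P})$. No gaps; the only difference is that you make explicit the details the authors leave to the reader.
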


\begin{proof} It follows from the form of $D^{\gamma^m}$ and the
 description of $B^{\gamma^m}$.  
\end{proof}

\begin{lemma} \label{lem:compact}
Let ${\rm P} \in P_{\gamma}$ and $n = p + q$. 
Assume that $(i_1,i_2,\dots,i_p)$ satisfies 
$\gamma_{i_k}\circ \cdots \circ
 \gamma_{i_1}({\rm P}) \notin B_{\gamma}$ for any $k=1,\dots,p$.  
Then it holds that 
\[
  \pi_{q,i_1\dots i_{p}|_{(N,p)}}(\Pi^{(q)}(\K(X^{\otimes
 q}))(\gamma_{i_p} \circ
 \dots \circ \gamma_{i_1}({\rm P}))) \subset   \Pi^{(n)}(\K(X^{\otimes
 n}))({\rm P}).  
\]
\end{lemma}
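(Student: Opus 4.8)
The plan is to verify the inclusion fiberwise, using the preceding Proposition characterizing $\Pi^{(m)}(\K(X^{\otimes m}))({\rm P})$ by explicit row- and column-equalities, applied twice: once at level $n$ and the point ${\rm P}$, and once at level $q$ and the point ${\rm Q} := \gamma_{i_p}\circ\cdots\circ\gamma_{i_1}({\rm P})$. Fix an arbitrary $S \in \Pi^{(q)}(\K(X^{\otimes q}))({\rm Q})$ and set $\tilde T := \pi_{q,i_1\dots i_p|_{(N,p)}}(S)$, the matrix supported on the single diagonal block $i_1\dots i_p|_{(N,p)}$, whose nonzero entries are $\tilde T_{i_1\dots i_p w|_{(N,n)},\, i_1\dots i_p w'|_{(N,n)}} = S_{w|_{(N,q)},\, w'|_{(N,q)}}$ for words $w,w' \in \Sigma^q$. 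If ${\rm P} \notin C_{\gamma^n}$ the right-hand side is all of $M_{N^n}(\comp)$ and there is nothing to prove (and indeed, by Lemma~\ref{lem:C_gamma}, this forces ${\rm Q} \notin C_{\gamma^q}$, so $S$ is unconstrained), so I may assume ${\rm P} \in C_{\gamma^n}$ and check that $\tilde T$ meets every equality demanded of an element of $\Pi^{(n)}(\K(X^{\otimes n}))({\rm P})$.

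First I would dispose of the conditions coming from levels $1 \le p' \le p$. Such a condition is attached to a branch point ${\rm R} \in B_{\gamma}$ with $h^{p'}({\rm R}) = {\rm P}$, reached from ${\rm P}$ along its uniquely determined word $(i'_1,\dots,i'_{p'-1})$, and it forces equality of entries as the $p'$-th index runs through $j({\rm R},0),\dots,j({\rm R},e_{\rm R}-1)$. Since every nonzero entry of $\tilde T$ has both row and column word beginning with $(i_1,\dots,i_p)$, the condition can be nonvacuous only if $(i'_1,\dots,i'_{p'-1}) = (i_1,\dots,i_{p'-1})$ and $i_{p'} \in \{j({\rm R},t)\}$. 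But the latter gives ${\rm R} = \gamma_{i_{p'}}(h({\rm R})) = \gamma_{i_{p'}}\circ\cdots\circ\gamma_{i_1}({\rm P})$, contradicting the standing hypothesis that $\gamma_{i_{p'}}\circ\cdots\circ\gamma_{i_1}({\rm P}) \notin B_{\gamma}$. Hence $i_{p'}$ is never among the varied indices, so all entries entering such a condition lie outside the support block and vanish, and the condition reads $0 = 0$.

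The remaining conditions come from levels $p < p' \le n$, and this step carries the real content. A level-$p'$ condition is attached to ${\rm R} \in B_{\gamma}$ with $h^{p'}({\rm R}) = {\rm P}$ and its determining word; it is vacuous on the support of $\tilde T$ unless that word begins with $(i_1,\dots,i_p)$, in which case the path from ${\rm P}$ to ${\rm R}$ factors through ${\rm Q} = {\rm R}_p$, so that $h^{p''}({\rm R}) = {\rm Q}$ with $p'' := p'-p \le q$ and the tail $(i'_{p+1},\dots,i'_{p'-1})$ is exactly the word determining ${\rm R}$ as seen from ${\rm Q}$ (Lemmas~\ref{lem:branch} and~\ref{lem:C_gamma}). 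Under the identification $i_1\dots i_p w|_{(N,n)} \leftrightarrow w|_{(N,q)}$ of the support block with $M_{N^q}(\comp)$, the $p'$-th index of a length-$n$ word corresponds to the $p''$-th index of $w \in \Sigma^q$, so the level-$p'$ equality demanded of $\tilde T$ is precisely the level-$p''$ equality of the entries $S_{w|_{(N,q)},\, w'|_{(N,q)}}$. As $S$ satisfies all such equalities by the same Proposition applied with $m = q$ at the point ${\rm Q}$, and entries whose row or column word does not begin with $(i_1,\dots,i_p)$ vanish on both sides, every level-$p'$ condition holds for $\tilde T$.

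Combining the two ranges of $p'$ shows that $\tilde T$ satisfies all the defining conditions, so $\tilde T \in \Pi^{(n)}(\K(X^{\otimes n}))({\rm P})$, which gives the claimed inclusion. I expect the main obstacle to be the index bookkeeping of the third paragraph: verifying that the determining word of a level-$p'$ branch point genuinely factors through ${\rm Q}$, and that the $N$-adic relabelling matches the two families of equalities position-by-position. The non-branch hypothesis does its essential work in the second paragraph, where it is exactly what makes the low-level conditions vacuous rather than genuinely constraining.
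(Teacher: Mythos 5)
Your proposal is correct and follows essentially the same route as the paper: both verify the inclusion by checking, entry by entry, that the block-diagonal matrix $\pi_{q,i_1\dots i_p|_{(N,p)}}(S)$ satisfies the row- and column-equality conditions characterizing $\Pi^{(n)}(\K(X^{\otimes n}))({\rm P})$, reducing the conditions at levels $p'>p$ to the corresponding conditions on $S$ at ${\rm Q}$ and observing that entries outside the support block vanish. If anything, your second paragraph is more explicit than the paper's proof in showing that the conditions at levels $p'\le p$ are vacuous precisely because of the hypothesis $\gamma_{i_k}\circ\cdots\circ\gamma_{i_1}({\rm P})\notin B_{\gamma}$, a point the paper leaves implicit.
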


\begin{proof} We shall show the left hand side is contained in the right hand
 side.  Put ${\rm Q}=\gamma_{i_p} \circ \cdots \circ \gamma_{i_1}({\rm P})$.  
Then ${\rm Q} \notin B_{\gamma}$.  $\Pi^{(q)}(\K(X^{\otimes q}))({\rm Q})$ is a
 subalgebra of $M_{N^q}(\comp)$.  A matrix $T \in M_{N^q}(\comp)$ is
 contained in $\Pi^{(q)}(\K(X^{\otimes q}))({\rm Q})$ if and only if the 
following holds: 

For any $r$ and ${\rm R}$ with 
$1 \le r \le q$ and ${\rm R}=\gamma_{i_{p+r}}\circ \cdots \gamma_{p+1}({\rm Q}) \in B_{\gamma}$, 
\begin{equation} \label{eq:row}
 T_{i_{i_{p+1}}\dots i_{p+r-1}j({\rm R},t)\tilde{i}_{p+r+1}\dots
 \tilde{i}_{n}|_{(N,q)}, \overline{j}_{p+1}\dots
 \overline{j}_{n}|_{(N,q)}}
 = T_{i_{i_{p+1}}\dots i_{p+r-1}j({\rm R},t')\tilde{i}_{p+r+1}\dots
 \tilde{i}_{n}|_{(N,q)}, \overline{j}_{p+1}\dots
 \overline{j}_{n}|_{(N,q)}}, 
\end{equation}
and 
\begin{equation} \label{eq:column}
 T_{\overline{i}_{p+1}\dots \overline{i}_{n}|_{(N,q)},
  i_{p+1}\dots i_{p+r-1}j({\rm R},s)\tilde{j}_{r+1}\dots
 \tilde{j}_{n}|_{(N,q)}} 
 = T_{\overline{i}_{p+1}\dots \overline{i}_{n}|_{(N,q)},
  i_{p+1}\dots i_{p+r-1}j({\rm R},s')\tilde{j}_{r+1}\dots
 \tilde{j}_{n}|_{(N,q)}},  
\end{equation}
for each $0 \le t,t',s,s' \le e_{{\rm R}}-1$ and for each
 $(\tilde{i}_{p+r+1},\dots,\tilde{i}_{n})$,
 $(\tilde{j}_{p+r+1},\dots,\tilde{j}_{n}) \in \Sigma^{q-r}$ , 
 $(\overline{i}_1,\dots,\overline{i}_{q})$,
 $(\overline{j}_1,\dots,\overline{j}_q) \in \Sigma^{q}$.  

Now assume that  $T \in M_{N^q}(\comp)$ satisfies the above condition 
of compact algebra. Put 
\[
   S = \pi_{q,i_1 \dots i_p}|_{(N,p)}(T) \in M_{N^n}(\comp).  
\]
We add $(i_1,\dots,i_p)$ to the row and the column of the condition 
\eqref{eq:row}, then we get the following:
\begin{align*}
 &  S_{i_1 \dots i_p i_{p+1} \dots i_{p+r-1}j({\rm R},t)\tilde{i}_{p+r+1}\dots
 \tilde{i}_{n}|_{(N,n)} , i_1 \dots i_p \overline{j}_{p+1} \dots
 \overline{j}_{n}|_{(N,n)}}  \\
 = & S_{i_1 \dots i_p i_{p+1} \dots i_{p+r-1}j({\rm R},t')\tilde{i}_{p+r+1}\dots
 \tilde{i}_{n}|_{(N,n)}, i_1 \dots i_p \overline{j}_{p+1} \dots
 \overline{j}_{n}|_{(N,n)}}.  
\end{align*}
To show that $S$ is in $\Pi^{(n)}(\K(X^{\otimes n}))({\rm P})$, it is 
sufficient to show that the 
equality holds for arbitrary column.  
For $(\overline{j}_1,\dots,\overline{j}_p) \ne (i_1,\dots,i_p)$, both
\[
 S_{i_1 \dots i_p i_{p+1} \dots i_{p+r-1}j({\rm R},t)\tilde{i}_{p+r+1}\dots
 \tilde{i}_{n}|_{(N,n)}, \overline{j}_1 \dots \overline{j}_p
 \overline{j}_{p+1} \dots \overline{j}_{n}|_{(N,n)}},  
\]
and 
\[
 S_{i_1 \dots i_p i_{p+1} \dots i_{p+r-1}j({\rm R},t')\tilde{i}_{p+r+1}\dots
 \tilde{i}_{n}|_{(N,n)}, \overline{j}_1 \dots \overline{j}_p
 \overline{j}_{p+1} \dots \overline{j}_{n}|_{(N,n)}} 
\]
are $0$, and the equality holds.  Then if ${\rm R} \in B_{\gamma}$,
 $h^{p+r}({\rm R})={\rm P}$, and $h({\rm R})=\gamma_{i_{p+r-1}}\circ \cdots
 \gamma_{i_1}({\rm P})$, it holds
\begin{align*}
&  S_{i_1 \dots i_pi_{p+1}\dots i_{p+r-1}j({\rm R},t)\tilde{i}_{p+r+1}\dots
 \tilde{i}_{n}|_{(N,n)},\overline{j}_1 \dots
 \overline{j}_p\overline{j}_{p+1}\dots \overline{j}_{n}|_{(N,n)}} \\
 = &  S_{i_1 \dots i_pi_{p+1}\dots i_{p+r-1}j({\rm R},t')\tilde{i}_{p+r+1}\dots
 \tilde{t}_{n}|_{(N,n)},\overline{j}_1 \dots
 \overline{j}_p\overline{j}_{p+1}\dots \overline{j}_{n}|_{(N,n)}}, 
\end{align*}
for each $0 \le t,t' \le e_{\rm R}-1$, for each 
 $(\tilde{i}_{p+r+1},\dots,\tilde{i}_{n}) \in \Sigma^{q-r}$, 
and for each $(\overline{j}_1,\dots,\overline{j}_{n}) \in \Sigma^n$.  
The argument also holds if we interchange columns and rows.  

Therefore 
\[
 S = \pi_{q,i_1 \dots i_p|_{(N,p)}}(T) \in \Pi^{(n)}(\K(X^{\otimes
 n})).  
\]
\end{proof}

If a block matrix $T$ with the following form 
\[ 
T = 
\begin{pmatrix}
  A & 0 & A   \\
  0 & 0 & 0   \\
  A & 0 & A   \\
\end{pmatrix} 
\]
is known to be block diagonal, then 
$T$ must be zero. The following lemma holds 
by a similar reason.

\begin{lemma} 
Let ${\rm P} \in P_{\gamma}$.  Assume that  
${\rm R} =\gamma_{i_p}\circ \cdots \circ \gamma_{i_1}({\rm P}) \in B_{\gamma}$ 
 for some $1 \le p \le n$. 
Then we have that 
\[
     \tilde{\rm C}({\rm P},{\rm R},p)  \cap \Pi^{(n)}(\K(X^{\otimes n}))({\rm P}) =
 \{\,0\,\}.  
\]
\end{lemma}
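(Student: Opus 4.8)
The plan is to read off the result directly from two ingredients already at hand: the block-diagonal description of a general element of $\tilde{\rm C}({\rm P},{\rm R},p)$, and the row/column symmetry conditions that characterize membership in $\Pi^{(n)}(\K(X^{\otimes n}))({\rm P})$ in the Proposition just above. The $3\times 3$ toy matrix displayed before the statement already encodes the whole mechanism: an element of $\tilde{\rm C}({\rm P},{\rm R},p)$ is block diagonal, while lying in the compact fiber forces one of its off-diagonal blocks to coincide with a diagonal one; the two requirements together can only be met by the zero block. As a preliminary remark I would note that the hypothesis ${\rm R}=\gamma_{i_p}\circ\cdots\circ\gamma_{i_1}({\rm P})\in B_{\gamma}$ forces ${\rm P}\in C_{\gamma^n}$: applying $h$ yields $\gamma_{i_{p-1}}\circ\cdots\circ\gamma_{i_1}({\rm P})=h({\rm R})\in C_{\gamma}$, so Lemma~\ref{lem:C_gamma} applies with $r=p-1\le n-1$. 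Hence the genuine (non-degenerate) characterization of the Proposition is the one in force.

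Next I would take an arbitrary $T$ in the intersection and, using the definition of $\tilde{\rm C}({\rm P},{\rm R},p)$, write
\[
T=\sum_{0\le l\le e_{\rm R}-1}\pi_{q,\,i_1\dots i_{p-1}j({\rm R},l)|_{(N,p)}}(A),\qquad A\in M_{N^q}(\comp),\ \ q=n-p.
\]
Thus $T$ is block diagonal, equal to the single block $A$ in the diagonal positions indexed by $i_1\dots i_{p-1}j({\rm R},l)$ for $0\le l\le e_{\rm R}-1$ and zero elsewhere; in particular any entry whose row and column block indices differ in their first $p$ coordinates is zero. Since ${\rm R}\in B_{\gamma}$ we have $e_{\rm R}\ge 2$, so some index $j({\rm R},t')$ with $t'\ne 0$ is available.

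The decisive step is to feed this $T$ into the row condition of the Proposition, specialized to this same ${\rm R}$ and $p$. For any fixed $(\tilde i_{p+1},\dots,\tilde i_n)$ and $(\tilde j_{p+1},\dots,\tilde j_n)$, the diagonal entry in the $j({\rm R},0)$ block is an entry of $A$, while the row condition (with $t=0$, $t'\ne 0$, column held fixed) equates it to
\[
T_{\,i_1\dots i_{p-1}j({\rm R},t')\tilde i_{p+1}\dots\tilde i_n|_{(N,n)},\ i_1\dots i_{p-1}j({\rm R},0)\tilde j_{p+1}\dots\tilde j_n|_{(N,n)}}.
\]
The row block index $i_1\dots i_{p-1}j({\rm R},t')$ of this entry differs from its column block index $i_1\dots i_{p-1}j({\rm R},0)$ in the $p$-th coordinate, so by the block-diagonality noted above it vanishes. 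Therefore every entry of $A$ is $0$, whence $A=0$ and $T=0$. I expect the only delicate point to be the multi-index bookkeeping, namely checking that replacing $j({\rm R},0)$ by $j({\rm R},t')$ in the row genuinely carries the entry into an off-diagonal block while the column stays in the $j({\rm R},0)$ block; this is precisely the content of the $3\times 3$ illustration, and the column condition furnishes the symmetric argument if one prefers to work on columns.
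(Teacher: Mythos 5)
Your proposal is correct and follows essentially the same route as the paper's proof: take a block-diagonal $T\in\tilde{\rm C}({\rm P},{\rm R},p)$, apply the row symmetry condition for membership in $\Pi^{(n)}(\K(X^{\otimes n}))({\rm P})$ to equate a diagonal-block entry of $A$ with an entry whose row block index $j({\rm R},t')$ differs from its column block index $j({\rm R},0)$, and conclude that entry vanishes by block-diagonality. The only difference is cosmetic (you fix $t=0$, $t'\ne 0$ where the paper writes generic $l$, $l'$, and you add the harmless preliminary observation that ${\rm P}\in C_{\gamma^n}$).
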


\begin{proof}
We chose $T \in \tilde{\rm C}({\rm P},{\rm R},p) \subset M_{N^n}(\comp)$.  
We show that if $T \in \Pi^{(n)}(\K(X^{\otimes n}))({\rm P})$, then $T=0$.  
It suffices to show that ($i_1\dots i_{p-1}j({\rm R},l)|_{(N,p)})$-th element
 of the block diagonal matrix $T$ is zero, 
i.e. it is sufficient to show that 
\[
 T_{i_1\dots,i_{p-1}j({\rm R},l)\tilde{i}_{p+1} \dots \tilde{i}_n|_{(N,n)}, 
  i_1\dots i_{p-1}j({\rm R},l)\overline{j}_{p+1}\dots
 \overline{j}_n|_{(N,n)}}
\]
is $0$ for each $(\tilde{i}_{p+1},\dots,\tilde{i}_{n})$, 
$(\overline{j}_{p+1},\dots,\overline{j}_n) \in \Sigma^{n-p}$.  
Since $T \in \Pi^{(n)}(\K(X^{\otimes n})({\rm P})$, 
it holds 
\begin{align*}
 &T_{i_1\dots,i_{p-1}j({\rm R},l)\tilde{i}_{p+1} \dots \tilde{i}_n|_{(N,n)}, 
  i_1\dots i_{p-1}j({\rm R},l)\overline{j}_{p+1}\dots \overline{j}_n|_{(N,n)}}  \\
= & T_{i_1\dots,i_{p-1}j({\rm R},l')\tilde{i}_{p+1} \dots \tilde{i}_n|_{(N,n)}, 
  i_1\dots i_{p-1}j({\rm R},l)\overline{j}_{p+1}\dots
 \overline{j}_n|_{(N,n)}}.   
\end{align*}
The right hand side is $0$ because $T$ is a block diagonal matrix.  
\end{proof}

\begin{lemma} Assume $1 \le p,p' \le n$, 
$h^p({\rm R})=h^{p'}({\rm R}')={\rm P}$, ${\rm R}$, ${\rm R}' \in B_{\gamma}$.  
If $p \ne p'$, or $p=p'$ and ${\rm R} \ne {\rm R}'$, then  we have that 
\[
\tilde{\rm C}({\rm P},{\rm R},p) \cap \tilde{\rm C}({\rm P},{\rm R}',p') = \{\,0\,\}.  
\]
\end{lemma}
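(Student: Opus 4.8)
The plan is to reduce the claim to the vanishing of the single matrix $A$ that parametrises an element of the first building block, by exhibiting a coarse diagonal block on which the first presentation equals $A$ while the second presentation contributes nothing. Recall that a general element of $\tilde{\rm C}({\rm P},{\rm R},p)$ has the form $\sum_{0\le l\le e_{\rm R}-1}\pi_{q,m_l}(A)$, where $q=n-p$, $m_l=i_1\dots i_{p-1}j({\rm R},l)|_{(N,p)}$, the word $(i_1,\dots,i_{p-1})$ is the one uniquely attached to the orbit from ${\rm P}$ to ${\rm R}$, and the same $A\in M_{N^q}(\comp)$ occupies each diagonal block indexed by $m_l$; likewise an element of $\tilde{\rm C}({\rm P},{\rm R}',p')$ reads $\sum_{0\le l'\le e_{{\rm R}'}-1}\pi_{q',m'_{l'}}(A')$ with $q'=n-p'$ and $m'_{l'}=i'_1\dots i'_{p'-1}j({\rm R}',l')|_{(N,p')}$. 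I would fix $T$ in the intersection, write it in both forms, and show $A=0$, whence $T=0$.

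First I would treat the case $p=p'$, where $q=q'$ and both presentations are block diagonal at the same granularity $N^p$; here it suffices to prove that the active index sets $\{m_l\}_l$ and $\{m'_{l'}\}_{l'}$ are disjoint, for then each active block of the first form sits over a zero block of the second and $A=0$ follows. Suppose instead $m_l=m'_{l'}$. Comparing the first $p-1$ symbols gives $(i_1,\dots,i_{p-1})=(i'_1,\dots,i'_{p-1})$, so ${\rm R}$ and ${\rm R}'$ have the same predecessor $\gamma_{i_{p-1}}\circ\cdots\circ\gamma_{i_1}({\rm P})$, which equals both $h({\rm R})$ and $h({\rm R}')$ by Assumption B(1); thus $h({\rm R})=h({\rm R}')=:c\in C_{\gamma}$. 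Comparing the last symbol gives an index $j=j({\rm R},l)=j({\rm R}',l')$ with $\gamma_j(c)={\rm R}$ and $\gamma_j(c)={\rm R}'$, forcing ${\rm R}={\rm R}'$, a contradiction.

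Next I would handle $p\ne p'$, taking $p<p'$ without loss of generality, so that the second presentation is block diagonal at the strictly finer granularity $N^{p'}$. Since $p\le p'-1$, every active fine block $m'_{l'}$ has the same length-$p$ prefix $(i'_1,\dots,i'_p)$, so the whole support of the second form lies inside the single coarse $N^p$-diagonal block $W=i'_1\dots i'_p|_{(N,p)}$. The first form, however, occupies the $e_{\rm R}\ge 2$ distinct coarse blocks $m_0,\dots,m_{e_{\rm R}-1}$, each carrying $A$. Picking $l^*$ with $m_{l^*}\ne W$ (possible because there are at least two distinct words $m_l$ while $W$ is a single word), the second form vanishes on the coarse block $m_{l^*}$, whereas the first form equals $A$ there; comparing entries yields $A=0$ and hence $T=0$.

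The one genuinely delicate input is the case $p=p'$: a priori the two branched points could share the same orbit prefix, and what rescues the argument is the elementary but essential fact that distinct branched points over a common branch value $c$ have disjoint fibres $\{\,j\in\Sigma : \gamma_j(c)={\rm R}\,\}$. The remaining steps are bookkeeping with the $N$-adic labelling of diagonal blocks, together with the single structural observation $e_{\rm R}\ge 2$ for a branched point, which is exactly what forces the first presentation to spread over at least two coarse blocks in the case $p\ne p'$.
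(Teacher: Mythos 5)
Your proof is correct, and in the case $p \ne p'$ it takes a genuinely different route from the paper's. For $p=p'$ the two arguments coincide: both reduce to disjointness of the index sets $\{(i_1,\dots,i_{p-1},j({\rm R},l))\}_l$ and $\{(i'_1,\dots,i'_{p-1},j({\rm R}',l'))\}_{l'}$ (the paper asserts this disjointness; you supply the short verification via $h({\rm R})=h({\rm R}')$ and the disjointness of the fibres over a common branch value). For $p\ne p'$ the paper normalizes to $p'<p$ and proves the stronger statement that the \emph{supports} of the two algebras are disjoint, splitting on whether the length-$(p'-1)$ prefixes agree and, in the agreeing case, invoking the claim $i_{p'}\ne j({\rm R}',l)$ — a point that silently uses Assumption B(3) (the orbit from ${\rm P}$ to ${\rm R}$ cannot pass through the branched point ${\rm R}'$ at an intermediate step). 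You instead normalize to $p<p'$ and exploit the rigidity of $\tilde{\rm C}({\rm P},{\rm R},p)$, namely that the \emph{same} matrix $A$ occupies all $e_{\rm R}\ge 2$ coarse diagonal blocks, while the finer algebra is confined to a single coarse block $W$; choosing a coarse block $m_{l^*}\ne W$ forces $A=0$. This buys you two things: you avoid any case split on the prefixes, and you do not need the delicate non-recurrence fact $i_{p'}\ne j({\rm R}',l)$ at all (indeed your argument works even if $W$ happened to coincide with one of the $m_l$). The trade-off is that you prove only that the intersection is zero, not the support disjointness the paper actually establishes — but only the zero intersection is claimed, so nothing is lost.
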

\begin{proof}
Assume $h({\rm R})=\gamma_{i_{p-1}}\circ \cdots \circ \gamma_{i_1}({\rm P})$, 
$h({\rm R}')=\gamma_{\tilde{i}_{p'-1}}\circ \cdots \circ
 \gamma_{\tilde{i}_1}({\rm P})$.  We note that 
$(i_1,\dots,i_{p-1})$ and $(\tilde{i}_1,\dots,\tilde{i}_{p'-1})$
are uniquely determined by ${\rm R}$, ${\rm R}'$, $p$, $p'$.  

First assume $p=p'$ and ${\rm R} \ne {\rm R}'$.  
Then it holds then 
\[
 \{\,(i_1,\dots,i_{p-1},j({\rm R},s))\,|\,0 \le s \le e_{\rm R}-1\,\}
 \cap 
 \{\,(\tilde{i}_1,\dots,\tilde{i}_{p-1},j({\rm R}',s'))\,|\, 0 \le s' \le
 e_{{\rm R}'}-1\,\}
= \emptyset.  
\]
Therefore the conclusion follows.  

Next assume that $p' \le p-1$.  

If $(i_1,\dots,i_{p'-1}) \ne
 (\tilde{i}_{1},\dots,\tilde{i}_{p'-1})$, it holds 
\begin{align*}
 \tilde{\rm C}({\rm P},{\rm R},p) \subset & \pi_{n-p'+1,i_1 \dots
 i_{p'-1}|_{(N,p'-1)}}(M_{N^{n-p'+1}}(\comp)) \\
 \tilde{\rm C}({\rm P},{\rm R}',p') \subset & \pi_{n-p'+1,\tilde{i}_1 \dots
 \tilde{i}_{p'-1}|_{(N,p'-1)}}(M_{N^{n-p'+1}}(\comp)).   
\end{align*}
If $(i_1,\dots,i_{p'-1}) = (\tilde{i}_1,\dots,\tilde{i}_{p'-1})$, 
for $0 \le l \le e_{{\rm R}'}-1$ it holds $\tilde{i}_{p'} \ne j({\rm R}',l)$.  Then 
it holds 
\begin{align*}
 \tilde{\rm C}({\rm P},{\rm R},p) \subset & \pi_{n-p',i_1\dots
 i_{p'-1}i_{p'}|_{(N,p')}}(M_{N^{n-p'}}(\comp)) \\
 \tilde{\rm C}({\rm P},{\rm R}',p) \subset & \sum_{0 \le l \le e_{{\rm R}'}-1}
  \pi_{n-p',i_1 \dots i_{p'-1}j({\rm R}',l)|_{(N,p')}}(M_{N^{n-p'}}(\comp)).  
\end{align*}
Since the non-zero positions are different, 
the conclusion follows.  
\end{proof}

\begin{lemma}  \label{lem:inclusion}
Let ${\rm P} \in P_{\gamma}$.  Assume $1 \le p' < p$ and 
${\rm R}' \in B_{\gamma}$ 
satisfy $h^{p'}({\rm R}')={\rm P}$.  We note that $(i_1,\dots,i_{p'-1})$ 
is uniquely 
determined by $h({\rm R}')
=\gamma_{i_{p'-1}}\circ \cdots \gamma_{i_1}({\rm P}) 
\in C_{\gamma}$.  Then we have that 
\begin{align*}
 & \sum_{(\overline{i}_{p'+1},\dots,\overline{i}_{p})
 \in \Sigma^{p-p'}, \,   
0 \le l \le
 e_{{\rm R}'}-1} 
 \pi_{q,i_1\dots,i_{p'-1} j({\rm R}',l) \overline{i}_{p'+1}\dots
 \overline{i}_p} \\
 &    (\Pi^{(q)}(\K(X^{\otimes q})(\gamma_{\overline{i}_p}\circ \cdots
\circ \gamma_{\overline{i}_{p'+1}} \circ 
   \gamma_{j({\rm R}',l)} \circ \gamma_{i_{p'-1}} \circ \cdots \circ  \gamma_{i_1}({\rm P}))) 
 \subset   \tilde{C}({\rm P},{\rm R}',p').  
\end{align*}
\end{lemma}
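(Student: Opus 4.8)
The plan is to follow a general element of the left--hand side through the two nested block decompositions of $M_{N^n}(\comp)$ and to recognize the result as a member of the defining form of $\tilde{C}({\rm P},{\rm R}',p')$. Throughout I write $q=n-p$, so that $\Pi^{(q)}(\K(X^{\otimes q}))(\,\cdot\,)\subset M_{N^q}(\comp)$ and the indices $i_1\dots i_{p'-1}j({\rm R}',l)\overline{i}_{p'+1}\dots\overline{i}_p$ are words of length $p$.

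First I would reduce the base points at which the compact blocks are evaluated. Since ${\rm R}'\in B_{\gamma}$, the labelling convention gives $\gamma_{j({\rm R}',l)}(h({\rm R}'))={\rm R}'$ for every $l$, while by hypothesis $h({\rm R}')=\gamma_{i_{p'-1}}\circ\cdots\circ\gamma_{i_1}({\rm P})$ with $(i_1,\dots,i_{p'-1})$ uniquely determined by Lemma~\ref{lem:index}. Hence
\[
\gamma_{\overline{i}_p}\circ\cdots\circ\gamma_{\overline{i}_{p'+1}}\circ\gamma_{j({\rm R}',l)}\circ\gamma_{i_{p'-1}}\circ\cdots\circ\gamma_{i_1}({\rm P})
=\gamma_{\overline{i}_p}\circ\cdots\circ\gamma_{\overline{i}_{p'+1}}({\rm R}'),
\]
which is independent of $l$. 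This is the decisive point: for each fixed $\overline{i}=(\overline{i}_{p'+1},\dots,\overline{i}_p)$ the fiber $\Pi^{(q)}(\K(X^{\otimes q}))(\,\cdot\,)$ attached to all $e_{{\rm R}'}$ blocks indexed by $l$ is one and the same subalgebra, so a general element of the left--hand side is of the form $T=\sum_{\overline{i}}\sum_{l}\pi_{q,\,i_1\dots i_{p'-1}j({\rm R}',l)\overline{i}_{p'+1}\dots\overline{i}_p}(A_{\overline{i}})$ with a single $A_{\overline{i}}\in \Pi^{(q)}(\K(X^{\otimes q}))(\gamma_{\overline{i}_p}\circ\cdots\circ\gamma_{\overline{i}_{p'+1}}({\rm R}'))$ inserted into every $l$.

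Next I would carry out the index bookkeeping relating the two decompositions. Setting $w_l:=i_1\dots i_{p'-1}j({\rm R}',l)|_{(N,p')}$, the $N$--adic identity
\[
i_1\dots i_{p'-1}j({\rm R}',l)\overline{i}_{p'+1}\dots\overline{i}_p|_{(N,p)}
= w_l\,N^{p-p'}+\overline{i}_{p'+1}\dots\overline{i}_p|_{(N,p-p')}
\]
shows, under the refinement $M_{N^{n-p'}}(\comp)=M_{N^{p-p'}}\bigl(M_{N^q}(\comp)\bigr)$, that the length--$p$ block at $i_1\dots i_{p'-1}j({\rm R}',l)\overline{i}_{p'+1}\dots\overline{i}_p$ is precisely the $\overline{i}$--th diagonal sub--block of the length--$p'$ block at $w_l$. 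Summing $A_{\overline{i}}$ over all $\overline{i}\in\Sigma^{p-p'}$ therefore fills the $w_l$--block with the single matrix $A:=\bigoplus_{\overline{i}\in\Sigma^{p-p'}}A_{\overline{i}}\in M_{N^{n-p'}}(\comp)$, and by the $l$--independence established above this $A$ does not depend on $l$.

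Finally, assembling the blocks gives $T=\sum_{0\le l\le e_{{\rm R}'}-1}\pi_{n-p',\,w_l}(A)$, which is exactly the defining form of an element of $\tilde{C}({\rm P},{\rm R}',p')$ (with a block--diagonal $A$, consistent with an inclusion rather than an equality); this yields the assertion. I expect the genuine obstacle to be the bookkeeping of the third paragraph --- keeping the nested $N$--adic expansions aligned and checking that the supports match exactly --- together with a careful use of Assumption~B in the first step, which is what guarantees both the uniqueness of $(i_1,\dots,i_{p'-1})$ and the collapse of the $e_{{\rm R}'}$ branches at ${\rm R}'$ onto a common evaluation point. Everything after that is routine.
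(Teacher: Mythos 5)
Your proof is correct and follows essentially the same route as the paper's: factor $\pi_{q,\,i_1\dots i_{p'-1}j({\rm R}',l)\overline{i}_{p'+1}\dots\overline{i}_p}$ through the nested block decomposition as $\pi_{n-p',\,w_l}\circ\pi_{q,\overline{i}}$, observe that $\gamma_{j({\rm R}',l)}\circ\gamma_{i_{p'-1}}\circ\cdots\circ\gamma_{i_1}({\rm P})={\rm R}'$ makes the evaluation point (hence the inserted block) independent of $l$, and recognize the result as $\sum_l\pi_{n-p',w_l}(A)$ for a single $A\in M_{N^{n-p'}}(\comp)$. Your write-up actually makes explicit two points the paper leaves implicit (the $l$-independence of the evaluation point and the $N$-adic index alignment), which is a welcome clarification rather than a deviation.
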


\begin{proof}
$\tilde{\rm C}({\rm P},{\rm R}',p')$ in the right hand side is 
\[
\tilde{\rm C}({\rm P},{\rm R}',p') 
 = \{\,\pi_{q',i_1\dots i_{p'-1}j({\rm R}',0)|_{(N,p')},\dots,
   i_1 \dots i_{p'-1}j({\rm R}',e_{{\rm R}'}-1)|_{(N,p')}}(A)\,|\, 
   A \in M_{N^{n-p'}}(\comp) \,\}.  
\]
The left hand side is expressed as follows: 
\begin{align*}
& \sum_{(\overline{i}_{p'+1},\dots,\overline{i}_{p}) \in \Sigma^{p-p'} }
 \sum_{0 \le l \le e_{{\rm R}'}-1} \\
 & \pi_{n-p',i_1 \dots i_{p'-1}j({\rm R}',l)|_{(N,p')}}
  (\pi_{q,\overline{i}_{p'+1} \dots \overline{i}_{p}}|_{(N,p-p')}
 (\Pi^{(q)}(\K(X^{\otimes q})((\gamma_{\overline{i}_p}\circ \cdots
 \circ 
 \gamma_{\overline{i}_{p'+1}}\circ
 \gamma_{j({\rm R}',l)}\circ\gamma_{i_{p'-1}} \circ \cdots \circ 
 \gamma_{i_1})({\rm P})).  
\end{align*}
Then 
\begin{align*}
 &  \sum_{0 \le l \le e_{{\rm R}'}-1} 
  \pi_{n-p',i_1 \dots i_{p'-1}j({\rm R}',l)|_{(N,p')}} \\
 & (\pi_{q,\overline{i}_{p'+1} \dots \overline{i}_{p}}|_{(N,p-p')}
 (\Pi^{(q)}(\K(X^{\otimes q})((\gamma_{\overline{i}_p}\circ \cdots
 \circ 
 \gamma_{\overline{i}_{p'+1}}\circ
 \gamma_{j({\rm R}',l)}\circ\gamma_{i_{p'-1}} \circ \cdots \circ 
 \gamma_{i_1})({\rm P})) 
\end{align*}
is contained in the right hand side for each
$(\overline{i}_{p'+1},\dots,\overline{i}_{p}) \in \Sigma^{p-p'}$.  
\end{proof}

The following thorem shows that the  matrix representation over the coefficient algebra of the $n$-th core is described by the fibres at $P$. They 
consist of two parts, that is,  "the compact operators" and the sum of block diagonal algebras $\tilde{\rm C}({\rm P},{\rm R},p)$, which remind 
the pictures of orbits through a branched point.
The key point of the proof is to understand the two effects 
of the branched points 
to "the compact operators" and the inclusion $\F^{(k)} \subset \F^{(k+1)}$. 
For example, these two are described by the following typical forms 
of matrices 
\[ 
\begin{pmatrix}
  A & 0 & A   \\
  0 & 0 & 0   \\
  A & 0 & A   \\
\end{pmatrix} 
\text{ and } 
\begin{pmatrix}
  B & 0 & 0   \\
  0 & 0 & 0   \\
  0 & 0 & B   \\
\end{pmatrix} 
\]

Therefore it is easy to see that 
any block diagonal algebra $\tilde{\rm C}({\rm P},{\rm R},p)$ 
protrudes from "the compact operators" 
$\Pi^{(n)}(\K(X^{\otimes n}))({\rm P})$. The following theorem means
that they are all which protrude from "the compact operators" 
$\Pi^{(n)}(\K(X^{\otimes n}))({\rm P})$.. 
 
\begin{thm} \label{th:representation}
Let ${\rm P} \in P_{\gamma}$. Then the fiber of $
\Pi^{(n)}(\F^{(n)}) \subset C([0,1], M_{2^n}({\mathbb C}))$ 
at ${\rm P}$ is given by 
\begin{align*}
\Pi^{(n)}(\F^{(n)})({\rm P})
   = &  \Pi^{(n)}(\K(X^{\otimes n}))({\rm P})
  + \left(\bigoplus_{1 \le p \le n,\, {\rm R} \in
 B_{\gamma},\,h^p({\rm R})={\rm P}}\tilde{\rm C}({\rm P},{\rm R},p) \right).   
\end{align*}
The first term and the second term is the sum as vector spaces and not
 as *-algebras.  
\end{thm}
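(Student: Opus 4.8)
The plan is to establish the stated equality by proving the two inclusions separately and then checking that the inner sum is direct. The computational engine is the block--diagonal fibre formula for a level--$r$ generator,
\[
\Omega^{(n,r)}(T_r)({\rm P}) = \sum_{(i_1,\dots,i_{n-r}) \in \Sigma^{n-r}} \pi_{r,\,i_1\dots i_{n-r}|_{(N,n-r)}}\bigl(\Omega^{(r)}(T_r)((\gamma_{i_{n-r}}\circ\cdots\circ\gamma_{i_1})({\rm P}))\bigr),
\]
which reduces the whole statement to tracking, for each length--$(n-r)$ path issuing from ${\rm P}$, whether and where that path meets $B_\gamma$. The decisive structural input is Assumption B(3): since $B_\gamma \cap P_\gamma = \emptyset$, any orbit through ${\rm P}$ meets $B_\gamma$ \emph{at most once}, and the forward $\gamma$--orbit of a branched point never re--enters $B_\gamma$.

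For the inclusion $\supseteq$ the compact summand is immediate, since $\K(X^{\otimes n}) = \K(X^{\otimes n}) \otimes I_0 \subset \F^{(n)}$. To place each $\tilde{\rm C}({\rm P},{\rm R},p)$ inside the fibre, I would first note that ${\rm R} \in B_\gamma$ forces ${\rm R} \notin C_{\gamma^q}$ for $q=n-p$: otherwise Lemma \ref{lem:C_gamma} would give a $\gamma$--composition of ${\rm R}$ lying in $C_\gamma$, hence one more $\gamma$ landing in $B_\gamma$, so ${\rm R} \in P_\gamma$, contradicting $B_\gamma \cap P_\gamma = \emptyset$. By the characterization Proposition, $\Pi^{(q)}(\K(X^{\otimes q}))({\rm R}) = M_{N^q}(\comp)$, so every $A \in M_{N^q}(\comp)$ occurs there. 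I would then choose a scalar bump function $\chi$ with $\chi({\rm R}) = 1$ supported in a neighbourhood of ${\rm R}$ missing $C_{\gamma^q}$ and all the finitely many other points $(\gamma_{i_p}\circ\cdots\circ\gamma_{i_1})({\rm P}) \neq {\rm R}$; then $\chi A \in D^{\gamma^q}$, and the fibre of the corresponding $S \otimes I_p \in \F^{(n)}$ at ${\rm P}$ collapses, by the uniqueness in the orbit Proposition, to exactly $\sum_{l} \pi_{q,\,i_1\dots i_{p-1}j({\rm R},l)|_{(N,p)}}(A)$, the general element of $\tilde{\rm C}({\rm P},{\rm R},p)$.

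For the inclusion $\subseteq$ I would expand $\Pi^{(n)}(T)({\rm P}) = \sum_{r=0}^n \Omega^{(n,r)}(T_r)({\rm P})$ through the displayed path sum and partition the length--$(n-r)$ paths according to Assumption B(3). Paths that never meet $B_\gamma$ contribute terms lying in $\Pi^{(n)}(\K(X^{\otimes n}))({\rm P})$ by Lemma \ref{lem:compact}. A path that does meet $B_\gamma$ has a unique branched point ${\rm R}'$, occurring at a unique step $p'$ with $h^{p'}({\rm R}')={\rm P}$; collecting all paths sharing a fixed $({\rm R}',p')$ — that is, summing over the branches $j({\rm R}',l)$ and over all continuations — and applying Lemma \ref{lem:inclusion} shows their combined contribution lies in $\tilde{\rm C}({\rm P},{\rm R}',p')$. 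Since this partition is exhaustive and disjoint, $\Pi^{(n)}(T)({\rm P})$ is a sum of one element of the compact summand and elements of the various $\tilde{\rm C}({\rm P},{\rm R}',p')$, as required.

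Finally, directness of the inner sum I would read off at the finest level: each element of $\tilde{\rm C}({\rm P},{\rm R},p)$ is block--diagonal, supported exactly on the length--$n$ paths from ${\rm P}$ passing through ${\rm R}$ at step $p$, and by Assumption B(3) every path carries at most one branched point, so these supports are pairwise disjoint across distinct $({\rm R},p)$ and the corresponding blocks are linearly independent. That the outer ``$+$'' with the compact summand is only a vector--space sum is precisely the earlier lemma giving $\tilde{\rm C}({\rm P},{\rm R},p) \cap \Pi^{(n)}(\K(X^{\otimes n}))({\rm P}) = \{0\}$, together with the failure of closure under products. The step I expect to be the main obstacle is the $\subseteq$ bookkeeping: verifying that the path partition is simultaneously exhaustive and disjoint and that the grouped branched contributions line up with the exact index pattern demanded by Lemma \ref{lem:inclusion}, all expressed in the $N$--adic suffix notation — this is where Assumption B(3) is used most delicately.
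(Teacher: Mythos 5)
Your proposal is correct and follows essentially the same route as the paper: expand the fibre over the generators $\K(X^{\otimes q})\otimes I$, partition the length-$p$ paths from ${\rm P}$ according to whether they miss $B_\gamma$, end at a branched point, or pass through one earlier, and dispatch the three cases via Lemma \ref{lem:compact}, the definition of $\tilde{\rm C}({\rm P},{\rm R},p)$, and Lemma \ref{lem:inclusion} respectively. Your explicit bump-function construction for the inclusion $\supseteq$ and the disjoint-support argument for directness merely fill in details the paper compresses into ``the equality holds for $B^p$'' and its earlier pairwise-intersection lemma, so this is a refinement rather than a different proof.
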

\begin{proof}
We shall show the left hand side is contained in the right hand side. 
Let $0 \le q \le n$.  It is enough to show that 
 $\Pi^{(n)}(\K(X^{\otimes q})({\rm P})$
are contained in the right hand side because 
\[
   \Pi^{(n)}(\F^{(n)})({\rm P}) = \sum_{q=0}^n \Pi^{(n)}(\K(X^{\otimes
 q}))({\rm P}).  
\]
Fix $0 \le q \le n$ and put $p=n-q$.  Then it holds
\[
 \Pi^{(n)}(\K(X^{\otimes q}))({\rm P}) 
   = \sum_{(i_1,\dots,i_{p}) \in \Sigma^{p}} 
   \pi_{q,i_1\dots i_p|_{(N,p)}}(\K(X^{\otimes q})((\gamma_{i_{p}} \circ
 \cdots \circ \gamma_{i_1})({\rm P}))).  
\]

We divide $\Sigma^p$ into three groups as follow:
\begin{align*}
 A^p =& \{\, (i_1,\dots,i_p) \in \Sigma^{p}\,|\, 
   \gamma_{i_k}\circ \dots \circ \gamma_{i_1}({\rm P}) \notin B_{\gamma} 
      \quad \text{ for any } k \ (1 \le k \le p)\,\}, \\
 B^p = & \{\, (i_1,\dots,i_p) \in \Sigma^{p}\,|\, 
    \gamma_{i_{p}}\circ \dots \circ \gamma_{i_1}({\rm P}) \in
 B_{\gamma}\,\}, \\
 C^p = & \{\, (i_1,\dots,i_p) \in \Sigma^{p}\,|\, 
  \gamma_{i_{p'}}\circ \gamma_{i_{p'-1}}\circ \cdots \circ \gamma_{i_1}({\rm P}) \in
 B_{\gamma},
 \quad \text{ for some  } p' \ 
 (1 \le p' \le p-1) \,\}.  
\end{align*}

Let ${\rm Q}= \gamma_{i_p}\circ \dots \circ \gamma_{i_1}({\rm P})$. 
Then $A^p$  corresponds to the case that  
there exists no branched point before ${\rm Q}$.  
$B^p$ corresponds to the case that $Q$ is a branched point.  
$C^p$  corresponds to the case that  
there exists a branched point before ${\rm Q}$. 

First we investigate the case $A^p$.  
By Lemma \ref{lem:compact}, it holds
\[
 \sum_{(i_1,\dots,i_{p}) \in A^{p}} 
   \pi_{q,i_1\dots i_p|_{(N,p)}}\Pi^{(n)}(\K(X^{\otimes
 q})(\gamma_{i_{p}} \circ
 \cdots \circ \gamma_{i_1}({\rm P}))) \subset  \Pi^{(n)}(\K(X^{\otimes
 n}))({\rm P}).  
\]

Next we investigate the case $B^p$.  We assume that ${\rm R}_1$, $\dots$, ${\rm R}_m
\in B_{\gamma}$ satisfy $h^p({\rm R}_k)={\rm P}$ for $k=1,\dots,m$.  Then it holds 
\[
  h({\rm R}_k) = (\gamma_{i^k_1}\circ \cdots \circ \gamma_{i^k_{p-1}})({\rm P}) \quad 
 k=1,\dots,m.  
\]
$(i^k_1,\dots,i^k_{p-1})$ $(k=1,\dots,m)$ are uniquely determined.  
Then the set of indices $B_p$ is described as: 
\[
 B_p =\bigcup_{k=1}^{m}\{\,(i^k_1,\dots,i^k_{p-1},j({\rm R}_k,l))\,|\,
 l=0,\dots,e_{{\rm R}_k}-1\,\}.  
\]
Then it holds 
\begin{align*}
&  \sum_{(i_1,\dots,i_{p}) \in B^{p}} 
   \pi_{q,i_1\dots i_p|_{(N,p)}}\Pi(\K(X^{\otimes q})(\gamma_{i_{p}} \circ
 \cdots \circ \gamma_{i_1}({\rm P})))  \\
 = & \sum_{k=1}^m \sum_{l=0}^{e_{{\rm R}_k}-1} 
  \pi_{q,i^k_1 \dots i^k_{p-1}j({\rm R}_k,l)}(\Pi^{(q)}(\K(X^{\otimes
 q})({\rm R}_k))\\
 = & \{\,\sum_{k=1}^m \sum_{l=0}^{e_{{\rm R}_k}-1} \pi_{q,i^k_1 \dots
 i^k_{p-1}j({\rm R}_k,l)}(A_k)\,|\, A_k \in M_{N^q}(\comp) \,\} \\
 = & \sum_{k=1}^m \tilde{\rm C}({\rm P},{\rm R}_k,p).  
\end{align*}

Last we investigate the case $C^p$.  It holds
\begin{align*}
&  \sum_{(i_1,\dots,i_{p}) \in C^{p}} 
   \pi_{q,i_1\dots i_p|_{(N,p)}}\Pi(\K(X^{\otimes q})(\gamma_{i_{p}} \circ
 \cdots \circ \gamma_{i_1}({\rm P})))  \\
= & \sum_{1 \le p' \le p-1,} \sum_{{\rm R} \in B_{\gamma}, h^{p'}({\rm R})={\rm P},}
    \sum_{0 \le l \le  e_{{\rm R}}-1,} \sum_{(i_{p'+1,\dots,i_{p}}) \in \Sigma^{p-p'}} \\
 &     \pi_{q,i_1 \dots i_{p'-1}j({\rm R},l)i_{p'+1 \dots i_p|_{(N,p)}}}
  (\Pi^{(q)}(\K(X^{\otimes q}))(\gamma_{i_p}\circ \cdots \circ
 \gamma_{i_{p'+1}}({\rm R}))) \\
 &  \subset   \sum_{1 \le p' \le p-1,} \sum_{{\rm R} \in B_{\gamma},
 h^{p'}({\rm R})={\rm P}} \tilde{\rm C}({\rm P},{\rm R},p').  
\end{align*}
The last inclusion follows form Lemma \ref{lem:inclusion}.  

The right hand side is contained in the left hand side, because 
the equality holds for $B^p$.  
\end{proof}

\begin{defn}  \rm

Since $\Pi^{(n)}(\K(X^{\otimes n}))({\rm P})$ and 
  $\left(\bigoplus_{1 \le p \le n,\, {\rm R} \in
 B_{\gamma},\,h^p({\rm R})={\rm P}}\tilde{\rm C}({\rm P},{\rm R},p) \right)$ 
are not orthogonal, we shall modify $\tilde{\rm C}({\rm P},{\rm R},p)$ 
to make them orthogonal.  
We define the modification ${{\rm C}}({\rm P},{\rm R},p) $ by 
\[
{{\rm C}}({\rm P},{\rm R},p) 
:= (I-Q_{\gamma^n}({\rm P}))\tilde{\rm C}({\rm P},{\rm R},p) \not= 0, 
\]
where $Q_{\gamma^n}({\rm P}) \in M_{N^n}(\comp)$ is the projection 
onto the subspace $Z_{\gamma^n}({\rm P}) \subset {\mathbb C}^{N^n}$. 
Since the subspace $Z_{\gamma^n}({\rm P})$ is invariant 
under any operator in $\tilde{\rm C}({\rm P},{\rm R},p)$, 
the projection $Q_{\gamma^n}({\rm P})$ commutes with any operator in 
$\tilde{\rm C}({\rm P},{\rm R},p)$. Therefore 
${{\rm C}}({\rm P},{\rm R},p) $ is a $C^*$-algebra and 
is isomorphic to $M_{N^q}(\comp)$. 
We may write it as follows:
\begin{align*}
    & {{\rm C}}({\rm P},{\rm R},p) 
 =  \{\,
\sum_{0 \le l \le e_{\rm R}-1} 
\pi_{q,i_1\dots i_{p-1}j({\rm R},l)|_{(N,p)}}
(A) \\
 &  - \frac{1}{e_{\rm R}}\sum_{0 \le m,n \le e_{\rm R}-1} \sigma_{q,i_1\dots
 i_{p-1}j({\rm R},m)|_{(N,p)},i_1\dots i_{p-1}j({\rm R},n)|_{(N,p)}(A) }
\,|\, A \in M_{N^{q}}(\comp)
\,\}.  
\end{align*}
\end{defn}

\begin{thm}Let ${\rm P} \in P_{\gamma}$. Then the fiber of $
\Pi^{(n)}(\F^{(n)}) \subset C([0,1], M_{2^n}({\mathbb C}))$ 
at ${\rm P}$ is given by 
\begin{align*}
\Pi^{(n)}(\F^{(n)})({\rm P})
   = &  \Pi^{(n)}(\K(X^{\otimes n}))({\rm P})
   \oplus \left(\bigoplus_{1 \le p \le n,\, {\rm R} \in
 B_{\gamma},\,h^p({\rm R})={\rm P}}{\rm C}({\rm P},{\rm R},p) \right),    
\end{align*}
and the right hand side is a direct sum as *-algebras. 
\end{thm}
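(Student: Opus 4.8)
The plan is to build on the previous Theorem~\ref{th:representation} by recognizing that $Q := Q_{\gamma^n}({\rm P})$ is a \emph{central} projection in the fiber algebra $\A := \Pi^{(n)}(\F^{(n)})({\rm P})$. From the Proposition we have $\Pi^{(n)}(\K(X^{\otimes n}))({\rm P}) = QM_{N^n}(\comp)Q$, so every $S$ in the compact part satisfies $QS=S=SQ$; and the Definition records that $Q$ commutes with each $\tilde{\rm C}({\rm P},{\rm R},p)$ since $Z_{\gamma^n}({\rm P})$ is invariant under that *-algebra. Because Theorem~\ref{th:representation} presents $\A$ as the linear span of these two kinds of pieces, $Q$ commutes with all of $\A$.

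First I would use the centrality of $Q$ to split
\[
\A = Q\A \oplus (I-Q)\A
\]
as an orthogonal direct sum of *-algebras. This is routine: both summands are closed under multiplication and adjoint because $Q$ is a central self-adjoint projection, they are orthogonal since $(Qa)((I-Q)b)=Q(I-Q)ab=0$, and $Q\A\cap(I-Q)\A=\{0\}$ because $Qx=x$ on the first summand while $Qx=0$ on the second.

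Next I would identify the two summands with the desired pieces. For the first, $Q$ fixes the compact part, and for $T\in\tilde{\rm C}({\rm P},{\rm R},p)$ one has $QT=QTQ\in QM_{N^n}(\comp)Q$; hence $Q\A=\Pi^{(n)}(\K(X^{\otimes n}))({\rm P})$. For the second, $(I-Q)$ kills the compact part (there $S=QSQ$) and carries each $\tilde{\rm C}({\rm P},{\rm R},p)$ onto ${\rm C}({\rm P},{\rm R},p)$ by the very definition of the modification; hence $(I-Q)\A=\sum_{p,{\rm R}}{\rm C}({\rm P},{\rm R},p)$. This already yields the decomposition $\A=\Pi^{(n)}(\K(X^{\otimes n}))({\rm P})\oplus\sum_{p,{\rm R}}{\rm C}({\rm P},{\rm R},p)$ as a direct sum of *-algebras.

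The remaining and main point is to refine $\sum_{p,{\rm R}}{\rm C}({\rm P},{\rm R},p)$ into an internal direct sum $\bigoplus_{p,{\rm R}}$ of mutually orthogonal *-algebras. Here I would invoke the earlier Lemma, whose proof shows that for distinct data the block index sets $\{i_1\dots i_{p-1}j({\rm R},l)\}_l$ are pairwise disjoint. From the explicit formula for ${\rm C}({\rm P},{\rm R},p)$, every nonzero block of any of its elements lies in rows and columns indexed by that single set: the $\pi$-terms occupy its diagonal blocks and the correcting $\sigma$-terms connect only positions within the same set, so the modification does not enlarge the support. Consequently distinct ${\rm C}({\rm P},{\rm R},p)$ have disjoint block supports, their products vanish, and their sum is direct as *-algebras. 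The hardest part is precisely this bookkeeping through the modification---verifying that compression by $I-Q$ together with the off-diagonal $\sigma$-correction keeps each ${\rm C}({\rm P},{\rm R},p)$ confined to its own family of blocks, which is exactly what makes the summands orthogonal.
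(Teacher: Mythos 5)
Your proposal is correct and follows essentially the same route as the paper: the paper's proof also splits each $T \in \tilde{\rm C}({\rm P},{\rm R},p)$ as $Q_{\gamma^n}({\rm P})T + (I-Q_{\gamma^n}({\rm P}))T$ and absorbs $Q_{\gamma^n}({\rm P})T = Q_{\gamma^n}({\rm P})TQ_{\gamma^n}({\rm P})$ into $\Pi^{(n)}(\K(X^{\otimes n}))({\rm P})$, which is exactly your central-projection decomposition. Your additional bookkeeping verifying that the $\sigma$-corrections keep each ${\rm C}({\rm P},{\rm R},p)$ supported on its own disjoint family of blocks is a useful elaboration of what the paper leaves implicit (via the earlier lemma on $\tilde{\rm C}({\rm P},{\rm R},p)\cap\tilde{\rm C}({\rm P},{\rm R}',p')=\{0\}$), but it is not a different argument.
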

\begin{proof} 
For any $T \in {\tilde{\rm C}}({\rm P},{\rm R},p)$,
$$
T = Q_{\gamma^n}({\rm P})T  + (I-Q_{\gamma^n}({\rm P}))T, 
$$ 
and $Q_{\gamma^n}({\rm P})T = Q_{\gamma^n}({\rm P})TQ_{\gamma^n}({\rm P})$  
is in  $\Pi^{(n)}(\K(X^{\otimes n}))({\rm P})$. 
These imply the conclusion. 
\end{proof}

The following Proposition will be used later to describe the model 
traces in terms of matrix representations. 

\begin{prop}\label{prop:orthogonal} In the situation of the above Theorem,
for $q = n-p$ with $q+1 \leq s \leq n$, 
${\rm C}({\rm P},{\rm R},p)$ is orthogonal to 
$\Pi^{(n)}(\K(X^{\otimes s})\otimes I)({\rm P})$. 
\end{prop}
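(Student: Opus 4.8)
The plan is to read ``orthogonal'' as the vanishing of all products $ST$ and $TS$ with $S \in {\rm C}({\rm P},{\rm R},p)$ and $T \in \Pi^{(n)}(\K(X^{\otimes s})\otimes I)({\rm P})$, and to localize the whole computation to a single diagonal block of the level-$s$ decomposition. First I would fix the word $(i_1,\dots,i_{p-1})$ uniquely determined by $h({\rm R})=\gamma_{i_{p-1}}\circ\cdots\circ\gamma_{i_1}({\rm P})$, and record that the hypothesis $q+1\le s$ together with $q=n-p$ gives $n-s\le p-1$. Reading off the explicit $\pi_{q,\cdot}$, $\sigma_{q,\cdot,\cdot}$ formula for ${\rm C}({\rm P},{\rm R},p)$, every nonzero entry $S_{J,J'}$ of an element $S$ has both its row word $J$ and its column word $J'$ beginning with $(i_1,\dots,i_{p-1})$; since $n-s\le p-1$, both words then begin with $w^{\ast}:=(i_1,\dots,i_{n-s})$. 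Hence $S$ is supported in the single diagonal block of $M_{N^n}(\comp)$ at position $w^{\ast}|_{(N,n-s)}$ of the level-$s$ block decomposition, i.e.\ $S=\pi_{s,\,w^{\ast}|_{(N,n-s)}}(\hat S)$ for some $\hat S\in M_{N^s}(\comp)$. This is where $s\ge q+1$ is essential: for $s\le q$ the support of $S$ spreads across several level-$s$ blocks and the statement fails.

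Next I would analyze the two within-block pieces at $w^{\ast}$. Put ${\rm Q}=\gamma_{i_{n-s}}\circ\cdots\circ\gamma_{i_1}({\rm P})$, so that ${\rm Q}={\rm R}_{n-s}$ in the orbit picture and $h^{s-q}({\rm R})={\rm Q}$. On one side, $T\in\Pi^{(n)}(\K(X^{\otimes s})\otimes I)({\rm P})$ is block diagonal for the level-$s$ decomposition, and by the displayed formula for $\Pi^{(n)}$ on $\K(X^{\otimes s})$ its block at $w^{\ast}$ equals $\Pi^{(s)}(T_0)({\rm Q})$ for the corresponding $T_0\in\K(X^{\otimes s})$; by the Proposition characterizing $\Pi^{(m)}(\K(X^{\otimes m}))$ this block lies in $Q_{\gamma^s}({\rm Q})\,M_{N^s}(\comp)\,Q_{\gamma^s}({\rm Q})$. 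On the other side, re-indexing the $\pi_{q,\cdot}$, $\sigma_{q,\cdot,\cdot}$ data of $\hat S$ relative to the block at $w^{\ast}$ identifies $\hat S$ as an element of ${\rm C}({\rm Q},{\rm R},s-q)$ at level $s$; here I would check that $(i_{n-s+1},\dots,i_{p-1})$ is the word determined by $h({\rm R})=\gamma_{i_{p-1}}\circ\cdots\circ\gamma_{i_{n-s+1}}({\rm Q})$, so that the indices $j({\rm R},l)$ and the branch index $e_{\rm R}$ are unchanged. Since ${\rm C}({\rm Q},{\rm R},s-q)=(I-Q_{\gamma^s}({\rm Q}))\,\tilde{\rm C}({\rm Q},{\rm R},s-q)$ and $Q_{\gamma^s}({\rm Q})$ commutes with every element of $\tilde{\rm C}({\rm Q},{\rm R},s-q)$, this gives $\hat S=(I-Q_{\gamma^s}({\rm Q}))\,\hat S\,(I-Q_{\gamma^s}({\rm Q}))$.

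Finally I would conclude. Writing $Q:=Q_{\gamma^s}({\rm Q})$ and $\hat T:=\Pi^{(s)}(T_0)({\rm Q})$, the block at $w^{\ast}$ satisfies $\hat T=Q\hat T Q$ while $\hat S=(I-Q)\hat S(I-Q)$, whence $\hat S\hat T=(I-Q)\hat S(I-Q)Q\hat T Q=0$ and likewise $\hat T\hat S=0$. Because $T$ is block diagonal and $S$ is supported in the single block $w^{\ast}$, the products $ST$ and $TS$ are supported in that same block and reduce there to $\hat S\hat T$ and $\hat T\hat S$; hence $ST=TS=0$, which yields the asserted orthogonality (and in particular the vanishing of the trace pairing needed later). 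The main obstacle is the bookkeeping in the first paragraph: tracking, through the $N$-adic index notation, which coordinates are outer (block) and which are inner, and thereby seeing that $s\ge q+1$ is exactly the condition confining $S$ to one level-$s$ block. Once the problem is localized to that block, orthogonality is immediate from the range/co-range splitting provided by $Q_{\gamma^s}({\rm Q})$.
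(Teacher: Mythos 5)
Your proposal is correct and follows essentially the same route as the paper's own proof: localize to the single level-$s$ diagonal block at $(i_1,\dots,i_{n-s})$, identify the block of ${\rm C}({\rm P},{\rm R},p)$ with ${\rm C}({\rm Q},{\rm R},s-q)$ and the block of $\Pi^{(n)}(\K(X^{\otimes s})\otimes I)({\rm P})$ with $\Pi^{(s)}(\K(X^{\otimes s}))({\rm Q})$, and conclude from their orthogonality at level $s$. You merely make explicit, via the projection $Q_{\gamma^s}({\rm Q})$, the within-block orthogonality that the paper cites from its earlier direct-sum decomposition, and you correctly isolate $s\ge q+1$ as the condition confining the support to one block.
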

\begin{proof}
Put $Q = \gamma_{i_{n-s}} \circ
 \cdots \circ \gamma_{i_1}({\rm P})$. Consider 
${\rm C}({\rm Q},{\rm R},s-q)$ in 
$\Pi^{(s)}(\F^{(s)})({\rm Q})$. Then we see that 
${\rm C}({\rm Q},{\rm R},s-q)$ is orthogonal  to
$\Pi^{(s)}(\K(X^{\otimes s})({\rm Q})$.

Since there exists no branched points,
$$
{\rm C}({\rm P},{\rm R},p) = 
\pi_{s,i_1\dots i_{n-s}|_{(N,n-s)}}({\rm C}({\rm Q},{\rm R},s-q)).
$$
Keep the $i_1\dots i_{n-s}|_{(N,n-s)}$-th block-diagonal component of 
$\Pi^{(n)}(\K(X^{\otimes s})\otimes I)({\rm P})$ and 
make all the other components zero. Then we get the matrix 
$$
\pi_{s,i_1\dots i_{n-s}|_{(N,n-s)}}\Pi^{s}(\K(X^{\otimes s})(Q). 
$$
Therefore ${\rm C}({\rm P},{\rm R},p)$ is orthogonal to 
$\pi_{s,i_1\dots i_{n-s}|_{(N,n-s)}}\Pi^{s}(\K(X^{\otimes s})(Q)$.
Moreover it is clear that ${\rm C}({\rm P},{\rm R},p)$ is orthogonal
to any other block-diagonal component of 
$\Pi^{(n)}(\K(X^{\otimes s})\otimes I)({\rm P})$. 
Therefore
${\rm C}({\rm P},{\rm R},p)$ is orthogonal to 
$\Pi^{(n)}(\K(X^{\otimes s})\otimes I)({\rm P})$.
\end{proof}

\subsection
{Tent map}
We describe the matrix representation for the case of tent map.  
Since proofs are given in general situation, we only present the
results.  

{\rm (Case of the fiber at $x=1$ )}: \\
  We fix $n$ and consider the fiber $\Pi^{(n)}(\F^{(n)})(1)$ of 
$\Pi^{(n)}(\F^{(n)})$ at $x=1$.  
We investigate $p-q$ orbit through the branched point $\frac{1}{2}$. 

\[
{\rm 1} 
\overset{\gamma_0,\gamma_1}
\Longrightarrow {\rm \frac{1}{2}}
\begin{matrix}
\ & \nearrow \\
\ & ...      \\
\ & \searrow
\end{matrix}
\begin{matrix}
\ & \nearrow \\
\ & ...      \\
\ & \searrow
\end{matrix}
\]

The following is the condition that a matrix is contained in the
image of a compact algebra.  
\begin{align}
& \Pi^{(n)}(\K(X^{\otimes n}))(1)   \nonumber \\
    = &\{T \in M_{2^n}(\comp)\,|\,T_{0i_2i_3\dots i_n,j_1j_2\dots j_n}
=T_{1i_2i_3\dots i_n,j_1j_2\dots j_n}, \quad
  \text{ for each } (i_2,i_3,\dots, i_n) \text{ and }
   (j_1,j_2,j_3,\dots, j_n) \nonumber \\
  & \qquad \qquad \qquad \,\, T_{i_1i_2i_3\dots i_n,0j_2\dots j_n}
=T_{i_1i_2i_3\dots i_n,1j_2\dots j_n} 
  \qquad \text{ for each } (i_1,i_2,i_3,\dots, i_n) \text{ and }
   (j_2,j_3,\dots, j_n) \,\,
 \}  \nonumber \\
= & \left\{ \begin{pmatrix} B & B \\  
\label{eq:eq9}       B & B
     \end{pmatrix}
 \,|\, B \in M_{2^{n-1}}(\comp)
\right\}.  
\end{align}
Since $\gamma_{0}(1)=\gamma_{1}(1)=1/2 \in B_{\gamma}$, only 
$\tilde{\rm C}(1,1/2,1)$ appears for $x=1$.  
Put 
\[
 \tilde{\rm C}(1,1/2,1) = \{\,\pi_{n-1,0|_{(2,1)},1|_{(2,1)}}(A)\,|\,A
 \in 
 M_{2^{n-1}}(\comp)\,\} = \left\{ \begin{pmatrix} A & O \\ O & A
	 	\end{pmatrix} \,|\,
    A \in M_{2^{n-1}}(\comp) \right\}, 
\]
and put 
\begin{align*}
 {\rm C}(1,1/2,1) & =  \{\,\pi_{n-1,0|_{(2,1)},1|_{(2,1)}}(A)
- \frac{1}{2}\sigma_{n-1,0|_{(2,1)},1|_{(2,1)}}(A)
\,|\,A
 \in 
 M_{2^{n-1}}(\comp)\,\} \\
& = 
\left\{ \begin{pmatrix} (1/2)A & -(1/2)A \\ -(1/2)A & (1/2)A
	 	\end{pmatrix} \,|\,
    A \in M_{2^{n-1}}(\comp) \right\}.  
\end{align*}
Then we get the following matrix representation at $x=1$: 
\begin{align*}
 &  \Pi^{(n)}(\F^{(n)}(1)) \\
  = &\Pi^{(n)}(\K(X^{\otimes n}))(1) \oplus C(1,1/2,1) \\
  = &\left\{ \begin{pmatrix} (1/2)A & (1/2)A \\
	      (1/2)A & (1/2)A
	    \end{pmatrix} \,|\, A \in M_{2^{n-1}}(\comp)\right\} 
    \oplus 
\left\{ \begin{pmatrix} (1/2)A & -(1/2)A \\
	      -(1/2)A & (1/2)A
	    \end{pmatrix} \,|\, A \in M_{2^{n-1}}(\comp)\right\} 
\end{align*}

{\rm (Case of the fiber at $x=0$ )}.\\
We investigate $p-q$ orbit through the branched point $\frac{1}{2}$. 

\[
{\rm 0} \overset{\gamma_0}\to {\rm 0} 
\overset{\gamma_0}\to {\rm 0} \dots {\rm 0}
\overset{\gamma_{1}}\to {\rm 1}  
\overset{\gamma_0,\gamma_1}
\Longrightarrow {\rm \frac{1}{2}}
\begin{matrix}
\ & \nearrow \\
\ & ...      \\
\ & \searrow
\end{matrix}
\begin{matrix}
\ & \nearrow \\
\ & ...      \\
\ & \searrow
\end{matrix}
\]

We consider $\Pi^{(n)}(\K(X^{\otimes n}))(0)$.  
The point $0$ is contained in $P_{\gamma}$ and is not contained in
$C_{\gamma}$.

The range of the compact algebra $\Pi^{(n)}(\K(X^{\otimes
n}))(0)$ is
the set of matrices in $M_{2^n}(\comp)$ which satisfy the following 
conditions for rows and columns:
\begin{align*}
& T_{0\cdots 010,j_1\cdots j_n}=  T_{0\cdots 011,j_1\cdots j_n}, \quad 
 T_{i_1\cdots i_n, 0\cdots 010,}=  T_{i_1\cdots i_n, 0\cdots 011},   \\
& T_{0\cdots 010i_n,j_1\cdots j_n}=  T_{0\cdots 011i_n,j_1\cdots j_n}, \quad 
 T_{i_1\cdots i_n, 0\cdots 010j_n,}=  T_{i_1\cdots i_n, 0\cdots
 011j_n},   \\
 & \dots \\
& T_{0\cdots 010i_{k}\cdots i_n,j_1\cdots j_n}=  T_{0\cdots 011i_k\cdots
 i_n,j_1\cdots j_n}, \quad 
 T_{i_1\cdots i_n, 0\cdots 010j_{k}\cdots j_n,}=  T_{i_1\cdots i_n, 0\cdots
 011j_k\cdots j_n},   \\
 & \dots \\
& T_{010i_{3}\cdots i_n,j_1\cdots j_n}=  T_{011i_3\cdots
 i_n,j_1\cdots j_n}, \quad 
 T_{i_1\cdots i_n, 010j_{3}\cdots j_n,}=  T_{i_1\cdots i_n, 011j_3\cdots
 j_n},  \\
 &T_{10i_{2}\cdots i_n,j_1\cdots j_n}=  T_{11i_2\cdots
 i_n,j_1\cdots j_n}, \quad 
 T_{i_1\cdots i_n, 10j_{2}\cdots j_n,}=  T_{i_1\cdots i_n, 11j_2\cdots
 j_n}.  
\end{align*}

We note that for $2 \le p \le n$ 
$\gamma_{i_{p-1}}\circ \gamma_{i_{p-2}} \circ \dots \circ
\gamma_{i_1}(0) \in C_{\gamma}$ if and only if 
$(i_1,i_2,\dots,i_{p-2},i_{p-1})=(0,0,\dots,0,1)$.  

For each $2 \le p \le n$, we define
\[
 \tilde{\rm C}(0,1/2,p) = \{\, \pi_{n-p,0\dots 010|_{(2,p)},0\dots
 011|_{(2,p)}}(A)\,|\,
A \in M_{2^{n-p}}(\comp)\,\}, 
\]
and also define 
\begin{align*}
{\rm C}(0,1/2,p)= & \{ \pi_{n-p,0\dots 010|_{(2,p)},0\dots
 011|_{(2,p)}}(A) - \sigma_{n-p,0\dots 010|_{(2,p)},0\dots
 011|_{(2,p)}}((1/2)A)\,|\, A \in M_{2^{n-p}}(\comp)\,\} \\
 = & 
\left\{\,\begin{pmatrix} O & O & O & O\\
                        O & (1/2)A & -(1/2)A & O \\
                        O & -(1/2)A & (1/2)A & O \\
                        O & O & O & O
	\end{pmatrix}\,|\, A \in M_{2^{n-p}}(\comp)
\,\right\}.  
\end{align*}

{\rm (Matrix representation)}: \\

The expression of the total algebra is as follows:
\begin{align*}
 \Pi^{(n)}(\F^{(n)})(x) & = M_{2^n}(\comp) \qquad 0<x<1,  \\
 \Pi^{(n)}(\F^{(n)})(1) & = \Pi^{(n)}(\K(X^{\otimes n}))(1) \oplus 
                   {\rm C}(1,1/2,1) 
\simeq M_{2^{n-1}}(\comp)\oplus 
                      M_{2^{n-1}}(\comp),(n \geq 1) 
\\ 
 \Pi^{(n)}(\F^{(n)})(0) & =  \Pi^{(n)}(\K(X^{\otimes n}))(0) \oplus 
                   \bigoplus_{p=2}^{n} {\rm C}(0,1/2,p)
                 \simeq M_{2^{n-1}+1}(\comp) \oplus
 \bigoplus_{p=0}^{n-2}M_{2^{p}}(\comp).  
\end{align*}
Moreover we have the following:
\begin{align*}
\Pi^{(0)}(\F^{(0)})(x) & 
= \Pi^{(0)}(A)(x)\simeq \comp \qquad 0 \leq x \leq1,  \\
\Pi^{(1)}(\F^{(1)})(x) & \simeq M_{2}(\comp) \qquad 0\leq x<1,  \\
\Pi^{(1)}(\F^{(1)})(1) & \simeq \comp \oplus \comp,  \\
\Pi^{(2)}(\F^{(2)})(0) & \simeq M_{3}(\comp) \oplus \comp        \\
\Pi^{(2)}(\F^{(2)})(1) & \simeq M_{2}(\comp) \oplus M_{2}(\comp) \\
\Pi^{(3)}(\F^{(3)})(0) & \simeq M_{5}(\comp) \oplus \comp \oplus M_{2}(\comp) \\
\Pi^{(3)}(\F^{(3)})(1) & \simeq M_{4}(\comp) \oplus M_{4}(\comp) 
\end{align*}

\subsection
{Sierpinski Gasket case}
Recall the set $B_{\gamma}=\{\,{\rm S},{\rm T},{\rm U}\,\}$ of 
branched points and the set 
$C_{\gamma}=\{\,{\rm P},{\rm Q},{\rm R}\,\}$ of branch values. 
We list the image of branched points and branch values under
the contractions $(\gamma_0,\gamma_1,\gamma_2)$ for the convenience 
of computation. .  
\begin{align*}
 & \gamma_0({\rm P}) ={\rm P}, \quad \gamma_1({\rm P}) = {\rm T}, \quad \gamma_2({\rm P}) = {\rm T}, \\
 & \gamma_0({\rm Q}) ={\rm S}, \quad \gamma_1({\rm Q}) = {\rm S}, \quad \gamma_2({\rm Q}) = {\rm {\rm R}}, \\
 & \gamma_0({\rm R}) ={\rm U}, \quad \gamma_1({\rm R}) = {\rm Q}, \quad
 \gamma_2({\rm R}) = {\rm U}.  
\end{align*}

{\rm (Picture of the fiber at $P$ )}.\\
We investigate $p-q$ orbits through the branched point $T$. 

\[
{\rm P}\overset{\gamma_0}\to {\rm P}
\overset{\gamma_0}\to {\rm P} \dots 
\overset{\gamma_0}\to {\rm P}  
\overset{\gamma_1, \gamma_2}
\Longrightarrow {\rm T} 
\begin{matrix}
\ & \nearrow \\
\ & ...      \\
\ & \searrow
\end{matrix}
\begin{matrix}
\ & \nearrow \\
\ & ...      \\
\ & \searrow
\end{matrix}
\]

The beginning $\gamma_{0}$ may be omitted. \\

{\rm (Picture of the fiber at $Q$ )}.\\
We investigate $p-q$ orbits through the branched point $S$. 

\[
{\rm Q} \overset{\gamma_2}\to {\rm R} 
\overset{\gamma_1}\to {\rm Q} 
\overset{\gamma_2}\to {\rm R}
\overset{\gamma_1}\to {\rm Q}
\dots 
\overset{\gamma_2}\to {\rm R} 
\overset{\gamma_1}\to {\rm Q}
\overset{\gamma_0, \gamma_1}
\Longrightarrow {\rm S} 
\begin{matrix}
\ & \nearrow \\
\ & ...      \\
\ & \searrow
\end{matrix}
\begin{matrix}
\ & \nearrow \\
\ & ...      \\
\ & \searrow
\end{matrix}
\]

We investigate $p-q$ orbits through the branched point $U$. 

\[
{\rm Q} \overset{\gamma_2}\to {\rm R} 
\overset{\gamma_1}\to {\rm Q} 
\overset{\gamma_2}\to {\rm R}
\overset{\gamma_1}\to {\rm Q}
\dots 
\overset{\gamma_1}\to {\rm R}
\overset{\gamma_0, \gamma_2}
\Longrightarrow {\rm U} 
\begin{matrix}
\ & \nearrow \\
\ & ...      \\
\ & \searrow
\end{matrix}
\begin{matrix}
\ & \nearrow \\
\ & ...      \\
\ & \searrow
\end{matrix}
\]

The picture of the fiber at ${\rm R}$ is similar with 
the picture of the fiber at ${\rm Q}$. 

Let $1 \le p \le n$.  
Elements in $\Pi^{(n)}(\K(X^{\otimes n}))({\rm P})$ are described as
elements in $M_{3^n}(\comp)$ satisfying the following all conditions:
\begin{itemize}
 \item $(1,i_2,\dots,i_n)$th row and $(2,i_2,\dots,i_n)$th row are equal,
       and $(1,i_2,\dots,i_n)$th column and $(2,i_2,\dots,i_n)$th column
       are equal.  
 \item $(0,1,i_3,\dots,i_n)$th row and $(0,2,i_3,\dots,i_n)$th row are
       equal and $(0,1,i_3,\dots,i_n)$th column and
       $(0,2,i_3,\dots,i_n)$th column are equal.  
 \item $(0,\dots,0,1,i_{k+1},\dots,i_n)$th row and 
       $(0,\dots,0,2,i_{k+1},\dots,i_n)$th row are equal and \\
       $(0,\dots,0,1,i_{k+1},\dots,i_n)$th column and 
       $(0,\dots,0,2,i_{k+1},\dots,i_n)$th column are equal for $(1 \le
       k \le n-1)$.  
 \item $(0,\dots,0,1)$th row and $(0,\dots,0,2)$th row are equal and 
       $(0,\dots,0,1)$th column and $(0,\dots,0,2)$th column are equal.  
\end{itemize}
The condition of compactness at ${\rm Q}$ and ${\rm R}$ are similar, and the
dimension of the image under matrix representation of the compact
algebras are equal for each branch value.  We note that 
$\gamma_{i_{p-1}} \circ \gamma_{i_{p-2}}\circ \dots \circ
\gamma_{i_1}({\rm P}) \in C_{\gamma}$ if and only if
$(i_1,\dots,i_{p-2},i_{p-1}) = (0,\dots,0,0)$.  
Put 
\[
 \tilde{\rm C}({\rm P},{\rm T},p)= \{\,\pi_{n-p,00\dots
 1|_{(3,p)},00\dots 2|_{(3,p)}}(A)
          \,|\, A \in M_{3^{n-p}}(\comp)\,\}.    
\]
We also put 
\[
 {\rm C}({\rm P},{\rm T},p) =  \left\{\, \pi_{n-p,00\dots
 1|_{(3,p)},00\dots
 2|_{(3,p)}}(A) - (1/2)\sigma_{n-p,00\dots 1|_{(3,p)}, 00\dots 2|_{(3,p)}}
 (A)\,|\, A \in M_{3^{n-p}}(\comp)\,
\right\}.    
\]
Then by the definition of ${\rm C}({\rm P},{\rm T},p)$ it holds
\[
 \Pi^{(n)}(\K(X^{\otimes n}))({\rm P}) + \tilde{\rm C}({\rm P},{\rm
 T},p)
   =  \Pi^{(n)}(\K(X^{\otimes n}))({\rm P}) \oplus {\rm C}({\rm P},{\rm T},p).  
\]
Then it holds
\[
  \Pi^{(n)}(\F^{(n)})({\rm P}) = \Pi^{(n)}(\K(X^{\otimes n}))({\rm P})
  \oplus  \bigoplus_{p=0}^{n-1}{\rm C}({\rm P},{\rm T},p)
\]

Next we consider the cases Q and R.  
Since Q and R are symmetric, we only consider the 
case Q.  Let $1 \le p \le n-1$.  Then $\gamma_{i_{p-1}}\circ \dots \circ 
\gamma_{i_1}({\rm Q}) \in C_{\gamma}$ if and only if $(i_1,\dots,i_{p-1}) =
(2,1,2,1,\dots)$
We note that the form changes according to the the parity of $p$.  
For the case $p$ is even, put 
\[
  \tilde{\rm C}({\rm Q},{\rm U},p) 
   = \{\,\pi_{n-p,2121\dots 20|_{(3,p)},2121\dots 22|_{(3,p)}}
    (A) \,|\, A \in M_{3^{n-p}}(\comp)\,
 \}.  
\]
For the case $p$ is odd, put
\[
  \tilde{\rm C}({\rm Q},{\rm S},p) 
   = \{\,\pi_{n-p,2121\dots 10|_{(3,p)},2121\dots 11|_{(3,p)}}
    (A) \,|\, A \in M_{3^{n-p}}(\comp)\,
 \}.  
\]
Moreover, for $p$ even, put
\[
  {\rm C}({\rm Q},{\rm U},p) 
   = \{\,\pi_{n-p,2121\dots 20|_{(3,p)},2121\dots 22|_{(3,p)}}
    (A) - \sigma_{n-p,2121\dots 20|_{(3,p)},2121\dots 22|_{(3,p)}}
    ((1/2)A) \,|\, A \in M_{3^{n-p}}(\comp)\,
 \}, 
\]
and for $p$ odd, put
\[
  {\rm C}({\rm Q},{\rm S},p) 
   = \{\,\pi_{n-p,2121\dots 20|_{(3,p)},2121\dots 22|_{(3,p)}}
    (A) - \sigma_{n-p,2121\dots 10|_{(3,p)},2121\dots 11|_{(3,p)}}
    ((1/2)A) \,|\, A \in M_{3^{n-p}}(\comp)\,
 \}.  
\]

We put 
\[
 {\rm H}^p = 
\begin{cases}
 & {\rm C}({\rm Q},{\rm U},p) \quad (p \text{ is even }),  \\
 & {\rm C}({\rm Q},{\rm S},p) \quad (p \text{ is odd }).  
\end{cases}
\]
Then also by Theorem \ref{th:representation}, 
\[
 \Pi^{(n)}(\F^{(n)})({\rm Q})  
   =  \Pi^{(n)}(\K(X^{\otimes n}))({\rm Q}) \oplus 
        \bigoplus_{p=1}^{n} {\rm H}^{p}.  
\]
We consider the case of R.  For $p$ even, put 
\[
  {\rm C}({\rm R},{\rm S},p) 
   = \{\,\pi_{n-p,1212\dots 10|_{(3,p)},1212\dots 11|_{(3,p)}}
    (A)
 - \sigma_{n-p,1212\dots 10|_{(3,p)},1212\dots 11|_{(3,p)}}
    ((1/2)A) \,|\, A \in M_{3^{n-p}}(\comp)\,
 \}, 
\]
and for $p$ odd, put
\[
  {\rm C}({\rm R},{\rm U},p) 
   = \{\,\pi_{n-p,1212\dots 20|_{(3,p)},1212\dots 22|_{(3,p)}}
    (A) - 
\sigma_{n-p,1212\dots 20|_{(3,p)},1212\dots 22|_{(3,p)}}
    ((1/2)A) \,|\, A \in M_{3^{n-p}}(\comp)\,
 \}.  
\]

Similarly we put 
\[
 {\rm I}^p = 
\begin{cases}
 & {\rm C}({\rm R},{\rm S},p) \quad (p \text{ is even }),  \\
 & {\rm C}({\rm R},{\rm U},p) \quad (p \text{ is odd }).   
\end{cases}
\]
Then by Theorem \ref{th:representation}, it holds that
\[
 \Pi^{(n)}(\F^{(n)})({\rm R})
   =  \Pi^{(n)}(\K(X^{\otimes n}))({\rm R}) \oplus
        \bigoplus_{p=1}^{n} {\rm I}^p.  
\]

Therefore we get the matrix representation of $\F^{(n)}$ as follows:
\begin{align*}
 \Pi^{(n)}(\F^{(n)})({\rm X}) \simeq &M_{3^n}(\comp),  \quad {\rm X} \ne
 {\rm P}, \, {\rm Q}, \, {\rm R} \\
  \Pi^{(n)}(\F^{(n)})({\rm P}) = &\Pi^{(n)}(\K(X^{\otimes n}))({\rm P})
  \oplus  \bigoplus_{p=1}^{n}{\rm C}({\rm P},{\rm T},p)
 \simeq M_{(1/2)(3^{n}+1)}(\comp) \oplus
 \bigoplus_{p=1}^{n}M_{3^{p-1}}(\comp), 
 \\
 \Pi^{(n)}(\F^{(n)})({\rm Q})=& \Pi^{(n)}(\K(X^{\otimes n}))({\rm Q}) \oplus
        \bigoplus_{p=1}^{n}{\rm H}^{n-p+1}\simeq M_{(1/2)(3^{n}+1)}(\comp)
 \oplus \bigoplus_{p=1}^{n}M_{3^{p-1}}(\comp), 
\\
 \Pi^{(n)}(\F^{(n)})({\rm R})=&\Pi^{(n)}(\K(X^{\otimes n}))({\rm R}) \oplus
        \bigoplus_{p=1}^{n}{\rm I}^{n-p+1} \simeq
 M_{(1/2)(3^{n}+1)}(\comp) \oplus
 \bigoplus_{p=1}^{n}M_{3^{p-1}}(\comp).  
\end{align*}
Three algebras are isomorphic as \cst -algebras.  But the realizations
of them in $M_{3^n}(\comp)$ are different.

\section
{Calculation of K-groups of the cores}
\subsection
{Tent map case}
From the explicit calculation of matrix representation, we have  that
\begin{align*}
 \Pi^{(n)}(\F^{(n)})(x) & = M_{2^n}(\comp), \qquad 0<x<1,  \\
 D(1):= & \Pi^{(n)}(\F^{(n)})(1) = \Pi^{(n)}(\K(X^{\otimes n}))(1) \oplus 
 {\rm C}(1,1/2,1) 
 \simeq   M_{2^{n-1}}(\comp)\oplus M_{2^{n-1}}(\comp). \\
 D(0):= \Pi^{(n)}(\F^{(n)})(0) & =  \Pi^{(n)}(\K(X^{\otimes n}))(0) \oplus 
                   \bigoplus_{p=0}^{n-2}{\rm C}(0,1/2,p)
                 \simeq M_{2^{n-1}+1}(\comp) \oplus
 \bigoplus_{p=0}^{n-2}M_{2^{p}}(\comp). 
\end{align*}

We introduce the following notation:
\begin{align*} 
  J = & \{\,T \in {\rm C}([0,1],M_{2^n}(\comp))\,|\,T(0)=O,\,
 T(1)=O\,\},    \\
  B = & \{\,T \in {\rm C}([0,1],M_{2^n}(\comp))\,|\,T(0) \in D(0),\, T(1)
 \in D(1)\,\}= \Pi^{(n)}(\F^{(n)}),    \\
  C = & D(0) \oplus D(1).  
\end{align*}
Then we have the following exact sequence: 
\[
 \{0\}  \to   J  \to B  \to C  \to \{0\}.  
\]
From this, we have the following 6-term exact sequence of K-groups:  
\[
\begin{CD}
   {\rm K}_0(J) @>{}>> {\rm K}_0(\F^{(n)}) @>{}>> {\rm K}_0(C) \\
   @A{{\rm ind}}AA
    @.
     @VV{{\rm exp}}V \\
   {\rm K}_1(C) @<<{}< {\rm K}_1(\F^{(n)}) @<<{}< {\rm K}_1(J). 
\end{CD}  
\]
Substituting known K-groups, we have that 
\[
\begin{CD}
   \{\mathbf 0\} @>{}>> {\rm K}_0(\F^{(n)}) @>{}>> {\mathbb Z}^{2}\oplus
 {\mathbb Z}^{n}  \\
   @A{{\rm ind}}AA
    @.
     @VV{{\rm exp}}V \\
   \{\mathbf 0\}  @<<{}< {\rm K}_1(\F^{(n)}) @<<{}< {\rm K}_1(J). 
\end{CD}  
\]
We calculate the exponential map (exp map) from ${\rm K}_0(C)$ to ${\rm
K}_1(J)$.  
Use 12.2 in  Rodam {\it et al.} \cite{Ro}, for example.  
Let $p_1$ be a minimal projection in 
$\Pi^{(n)}(\K(X^{\otimes n}))(1) 
                    \simeq M_{2^{n-1}}(\comp)$.
There exists an Hermite
element $T \in M_{2^n}(C[0,1])$ with $T(1)=p_1$, $T(0)=0$.  
Then $\delta(([p_1],0)) =-[{\rm exp}(2\pi i T)]=-1$.  
Thus  $-1$ is the image of $[p_1]$ in ${\rm K}_1(J)$ under the exponential
map.  On the other hand, let $p_2$ be a minimal projection in 
${\rm C}(1,1/2,1)  \simeq M_{2^{n-1}}(\comp)$. Then 
There exists an Hermite element $T \in M_{2^n}(C[0,1])$ with $T(1)=0$,
$T(0)=p_2$.  
Thus $\delta((0,[p_2]))=1 \in {\rm K}_1(J)$.  
For a positive element in ${\mathbb Z}^2 \oplus {\mathbb Z}^n$, 
exp map is described as
\[
 {\rm exp}((m_1,m_2,r_1,\dots,r_n)) = -m_1-m_2 + r_1 + \cdots +r_n.  
\]
By the extension of exp map by homomorphism property, it is shown that ${\rm
exp}$ is given on the whole part of ${\mathbb Z}^2 \oplus {\mathbb
Z}^n$.  

${\rm K}_1(J)\simeq {\mathbb Z}$ and the exp map is surjective.  
Then for each $n$, it holds that ${\rm K}_1(\F^{(n)})=\{\mathbf 0\}$.  
On the other hand, it holds that 
\[
{\rm K}_0(\F^{(n)}) \simeq {\rm ker}({\rm exp})
   \simeq 
\{\,((m_1,m_2),(r_1,\dots,r_n)) \in {\mathbb Z}^{2} \oplus {\mathbb Z}^n\,|\,
        m_1 + m_2 = r_1 + \cdots +r_n \,\} \simeq {\mathbb Z}^{n+1}.  
\]

Next, we calculate the embeddings of $\{\,{\rm
K}_0(\F^{(n)})\,\}_{n=0,1,2,\dots}$.  Let $p^1_1$ be a minimal
projection of ${\rm C}(1,1/2,1)$
and $p^1_2$ be a minimal projection of $\Pi^{(n)}(\K(X))(1)$.  
Then it holds that 
\[
 {\rm K}_0(D(1)) = \{\,m_1[p^1_1]+m_2[p^1_2]\,|\,m_1,m_2 \in {\mathbb
 Z}\,\} \simeq  \{\,(m_1,m_2)\,|\,m_1,m_2 \in {\mathbb
 Z}\,\}.  
\]
Let $p_i^0$ be a minimal projection of ${\rm C}(0,1/2,i+1)$ for $1 \le i
\le n-1$ and $p_n^0$ be a minimal projection of 
$\Pi^{(n)}(\K(X^{\otimes n
}))(0)$.  Then it holds 
\[
  {\rm K}_0(D(0)) = \{\,r_1[p^0_1]+\dots + r_n[p^0_n]\,|\,r_i \in {\mathbb
  Z}\} \simeq  \{\,(r_1,\dots,r_n)\,|\,r_i \in {\mathbb  Z}\}.  
\]
Let $1\le i \le n$, and we calculate the embedding of 
$((1,0),(0,\dots,1_{i},\dots,0)) \in {\mathbb Z}^2 \oplus {\mathbb
Z}^n$.  We write as $\Psi^{n,n+1}$ the embedding map from ${\rm
K}_0(\F^{(n)})$ to ${\rm K}_0(\F^{(n+1)})$. 

We denote by 
$r^n_1$ a minimal projection of ${\rm C}(1,1/2,1)$, and by $q^n_i$ a
minimal projection of ${\rm C}(0,1/2,n-i+1)$.  We take $T\in \F^{(n)}$ such
that $T(0)=q^n_i$, $T(1)=r^n_1$.  We calculate the forms of
$\Pi^{(n+1)}(T)(0)$ and $\Pi^{(n+1)}(T)(1)$.  
It holds that 
\[
\Pi^{(n+1)}(T)(0) = \diag( \Pi^{(n)}(T)(0),\Pi^{(n)}(T)(1)),  \quad 
\Pi^{(n+1)}(T)(1) =  \diag( \Pi^{(n)}(T)(1/2),\Pi^{(n)}(T)(1/2)).  
\]
Then it holds that 
\[
 \Psi^{(n,n+1)}((1,0),(0,\dots,1_{i},\dots,0))
   = ((1,1),(1_{1},\dots,1_{i+1},\dots,0)).  
\]
Similarly, it holds that
\begin{align*}
 \Psi^{(n,n+1)}((0,1),(0,\dots,1_{i},\dots,0))
   = &((1,1),(0,\dots,1_{i+1},\dots,1_{n+1})) \\
 \Psi^{(n,n+1)}((1,0),(0,\dots,0,\dots,1_n))
   = &((1,1),(1_{1},\dots,0,\dots,1_{n+1})) \\
 \Psi^{(n,n+1)}((0,1),(0,\dots,0,\dots,1_n))
   = &((1,1),(0,\dots,0,\dots,2_{n+1})).  
\end{align*}
The algebra ${\rm K}_0(\F^{(\infty)})$ can be described as the inductive
limit algebra under the embedding given by $\Psi^{n,n+1}$.  

As an basis in ${\rm K}_0(\F^{(n)})$, we can take
\[
  e_i^n =((1,0),(0,\dots,0,1_i,0,\dots,0)) \quad (i=1,\dots,n), \quad 
  e_{n+1}^n =((0,1),(1,0,\dots,0,0)).  
\]
We note that 
\[
  ((0,1),(0,\dots,0,1_i,0,\dots,0)) = e^n_{n+1} + e^n_i -e^n_1.  
\]
Then the action of $\Psi^{(n,n+1)}$ on the bases is expressed as
\begin{align*}
  \Psi^{(n,n+1)}(e^n_i) = & ((1,1),(1_1,0,\dots,0,1_{i+1},0,\dots,0))
                        =  e^{n+1}_{i+1} + e^{n+1}_{n+2} \quad
 (i=1,\dots,n), \\
  \Psi^{(n,n+1)}(e^n_{n+1}) = & 
      ((1,1),(0,\dots,2_{n+1})) \\
       = &((1,0),(0,\dots,0,1_{n+1})) + ((0,1),(1_1,\dots,0))
 -((1,0),(1_1,\dots,0))  \\ 
         & + ((1,0),(0,\dots,1_{n+1}))
       = - e^{n+1}_1+  2e^{n+1}_{n+1} + e^{n+1}_{n+2}.  
\end{align*}
The matrix representation $A^{(n,n+1)}$ of $\Psi^{n,n+1}$using the bases
is expressed as
\[
 A^{(n,n+1)}
 = \begin{pmatrix}
     0 & 0 & 0 & \cdots & 0 & -1  \\
     1 & 0 & 0 & \cdots & 0 & 0   \\
     0 & 1 & 0 & \cdots & 0 & 0  \\
     \vdots & \vdots & \vdots & \ddots & \vdots & \vdots \\
     0  & 0 &  0 & \cdots &  1 & 2 \\
     1  & 1 & 1 & \cdots & 1  & 1 
   \end{pmatrix}.  
\]
The matrix $A^{(n,n+1)}$ is an $(n+2)\times (n+1)$ matrix whose entries are
integers.  

\begin{prop}\label{prop:K-group}  The ${\rm K}$ group of the core of 
the \cst -algebra
 associated with the tent map is given as follows:
\begin{align*}
  {\rm K}_0(\F^{(\infty)}) = & \lim_{n \to \infty}\left({\mathbb Z}^{n+1}
 \overset{A^{(n,n+1)}}{\to}
 {\mathbb Z}^{n+2} \right) \\
  {\rm K}_1(\F^{(\infty)}) =& \{\,0\,\}.  
\end{align*}
\end{prop}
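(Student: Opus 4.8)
The plan is to obtain the proposition directly from the continuity of $\rm K$-theory under inductive limits, since all the genuinely computational work has already been assembled above. Recall from Section~2 that the core is the inductive limit $\F^{(\infty)} = \overline{\bigcup_{n=0}^{\infty}\F^{(n)}}$, the connecting $*$-homomorphism $\F^{(n)} \hookrightarrow \F^{(n+1)}$ being the canonical embedding $T \mapsto T \otimes I$. Because the functors ${\rm K}_0$ and ${\rm K}_1$ commute with inductive limits of \cst-algebras (see, for instance, \cite{Ro}), I would first record the identity
\[
 {\rm K}_i(\F^{(\infty)}) = \lim_{n \to \infty}\, {\rm K}_i(\F^{(n)}), \qquad i = 0,1,
\]
in which the connecting maps of the limit on the right are exactly those induced on $\rm K$-theory by the embeddings $\F^{(n)} \hookrightarrow \F^{(n+1)}$.

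The case $i=1$ is then immediate: the $6$-term exact sequence computed above gives ${\rm K}_1(\F^{(n)}) = \{0\}$ for every $n$, so the inductive limit of the trivial groups is trivial and ${\rm K}_1(\F^{(\infty)}) = \{0\}$. For $i=0$ I would combine the two facts established earlier, namely that ${\rm K}_0(\F^{(n)}) \simeq {\mathbb Z}^{n+1}$ (realized concretely through the projection classes $e^n_1,\dots,e^n_{n+1}$) and that, with respect to these bases, the map induced on ${\rm K}_0$ by the embedding is $\Psi^{(n,n+1)}$, whose matrix is $A^{(n,n+1)}$. Substituting into the displayed identity yields ${\rm K}_0(\F^{(\infty)}) = \lim_{n\to\infty}\bigl({\mathbb Z}^{n+1} \overset{A^{(n,n+1)}}{\to} {\mathbb Z}^{n+2}\bigr)$, which is the assertion.

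The only step requiring care — and the sole place where the concrete model is indispensable — is the verification that the embedding $\F^{(n)} \hookrightarrow \F^{(n+1)}$ induces precisely $\Psi^{(n,n+1)}$ on ${\rm K}_0$. Here I would track a projection $T \in \F^{(n)}$, specified by its fibres $T(0)$, $T(1)$ over the endpoints, through the matrix representation $\Pi^{(n+1)}$, using the relations $\Pi^{(n+1)}(T)(0) = \diag(\Pi^{(n)}(T)(0), \Pi^{(n)}(T)(1))$ and $\Pi^{(n+1)}(T)(1) = \diag(\Pi^{(n)}(T)(1/2), \Pi^{(n)}(T)(1/2))$, and then read off $[\Pi^{(n+1)}(T)] \in {\rm K}_0(\F^{(n+1)}) \simeq {\rm K}_0(D(0)) \oplus {\rm K}_0(D(1))$ in the chosen bases. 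Carrying this bookkeeping out on each basis vector $e^n_i$ reproduces the columns of $A^{(n,n+1)}$; once this identification is secured, the proposition is nothing more than the continuity of $\rm K$-theory.
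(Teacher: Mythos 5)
Your proposal is correct and follows essentially the same route as the paper: the proposition is obtained by combining continuity of ${\rm K}$-theory under inductive limits with the computations already carried out (the six-term exact sequence giving ${\rm K}_1(\F^{(n)})=\{0\}$ and ${\rm K}_0(\F^{(n)})\simeq{\mathbb Z}^{n+1}$, and the fibrewise identities $\Pi^{(n+1)}(T)(0)=\diag(\Pi^{(n)}(T)(0),\Pi^{(n)}(T)(1))$, $\Pi^{(n+1)}(T)(1)=\diag(\Pi^{(n)}(T)(1/2),\Pi^{(n)}(T)(1/2))$ identifying the induced map on ${\rm K}_0$ with $\Psi^{(n,n+1)}$, whose matrix is $A^{(n,n+1)}$). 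This is precisely how the statement is justified in the paper, where the proposition serves as a summary of the preceding calculation.
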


\begin{lemma} 
The map $\Phi^{(n,n+1)}$ is
injective for each $n$.  
\end{lemma}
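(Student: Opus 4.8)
The plan is to read off injectivity directly from the integer matrix $A^{(n,n+1)}$ computed just above: since $\Phi^{(n,n+1)}$ is (under the evident identification $\Phi^{(n,n+1)} = \Psi^{(n,n+1)}$) a homomorphism between the free abelian groups ${\rm K}_0(\F^{(n)}) \cong {\mathbb Z}^{n+1}$ and ${\rm K}_0(\F^{(n+1)}) \cong {\mathbb Z}^{n+2}$ whose matrix in the chosen bases $\{e^n_i\}$ and $\{e^{n+1}_i\}$ is precisely $A^{(n,n+1)}$, injectivity is equivalent to showing that this matrix has trivial kernel over ${\mathbb Z}$, i.e. that its $n+1$ columns are linearly independent.

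First I would suppose $v = {}^t(v_1,\dots,v_{n+1}) \in {\mathbb Z}^{n+1}$ lies in the kernel and decode the resulting equations row by row. The subdiagonal block of $A^{(n,n+1)}$ (rows $2$ through $n$, each carrying a single entry $1$ in a distinct column) immediately forces $v_1 = v_2 = \cdots = v_{n-1} = 0$. The top row, whose only nonzero entry is the $-1$ in the last column, gives $v_{n+1} = 0$. Finally the row $(0,\dots,0,1,2)$ gives $v_n + 2v_{n+1} = 0$, whence $v_n = 0$. Thus $v = 0$, and $\Phi^{(n,n+1)}$ is injective.

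I do not expect any genuine obstacle here: the argument is a one-pass elimination, and the only point requiring care is to transcribe the matrix $A^{(n,n+1)}$ correctly from the formulas for $\Psi^{(n,n+1)}(e^n_i)$ and $\Psi^{(n,n+1)}(e^n_{n+1})$ established above. It is worth noting that the all-ones bottom row (row $n+2$) is not even needed for the conclusion, and that since the elimination produces the exact equalities $v_i = 0$ rather than relations only up to torsion, the statement holds verbatim over ${\mathbb Z}$ (indeed over any commutative ring), so no separate argument about the cokernel or about torsion is required.
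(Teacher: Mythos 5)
Your proof is correct and is essentially the paper's argument: the paper simply asserts that the explicit matrix $A^{(n,n+1)}$ has full column rank (it misprints the rank as $n$ rather than $n+1$), while you carry out the row-by-row elimination that verifies this. The only caveat is the bookkeeping you already flag, namely the identification $\Phi^{(n,n+1)}=\Psi^{(n,n+1)}$ and the correct transcription of the matrix, both of which you handle properly.
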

\begin{proof}
Since the matrix $A^{(n,n+1)}$ is of rank $n$, $\Phi^{(n,n+1)}$ is
injective. 
\end{proof}

\begin{thm} \label{thm:dimension-group} 
Let $\F^{(\infty)}$ be the core of the \cst -algebra associated
with tent map, then ${\rm K}_0(\F^{(\infty)})$ is isomorphic to 
the countably generated 
free abelian group ${\mathbb Z}^{\infty}\cong {\mathbb Z}[t]$ 
as an abstract abelian group. 
Moreover for the tracial state ${\tau}^{(\infty)}$ on $\F^{(\infty)}$ 
corresponding to the Hutchinson measure, we have that 
${\tau}^{{(\infty)}*}({\rm K}_0(\F^{(\infty)})) = {\mathbb Z}[\frac{1}{2}]$. 
 
\end{thm}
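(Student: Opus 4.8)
The statement has two parts: that $\mathrm{K}_0(\F^{(\infty)})$ is a free abelian group of countably infinite rank, and that the Hutchinson trace detects exactly $\mathbb{Z}[\tfrac12]$ on it. By Proposition~\ref{prop:K-group} the group is the inductive limit of the system $\mathbb{Z}^{n+1}\xrightarrow{A^{(n,n+1)}}\mathbb{Z}^{n+2}$ with injective connecting maps, so it is automatically countable and torsion-free; the real content is that it is \emph{free}. The plan is to show that each connecting map is a split injection with free cokernel, whence the limit is a countable direct sum of copies of $\mathbb{Z}$.

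The key computation I would carry out is to identify the image of $A^{(n,n+1)}$ inside $\mathbb{Z}^{n+2}$. Writing the columns of $A^{(n,n+1)}$ in the standard basis $e_1,\dots,e_{n+2}$, the image is spanned by $e_{i+1}+e_{n+2}$ ($1\le i\le n$) together with $-e_1+2e_{n+1}+e_{n+2}$. Solving the resulting linear system shows that a vector $w=(w_1,\dots,w_{n+2})$ lies in the image if and only if $w_{n+2}=w_1+\cdots+w_{n+1}$. Thus $\mathrm{Im}(A^{(n,n+1)})$ is exactly the kernel of the surjection $\mathbb{Z}^{n+2}\to\mathbb{Z}$, $w\mapsto w_{n+2}-(w_1+\cdots+w_{n+1})$, so the cokernel is isomorphic to $\mathbb{Z}$, in particular free. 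Consequently each connecting map is a split injection realizing $\mathbb{Z}^{n+1}$ as a direct summand of $\mathbb{Z}^{n+2}$ with free complement. The inductive limit is then the increasing union of free groups, each a direct summand of the next, and concatenating bases adapted to this filtration exhibits a basis of the limit. Since the ranks $n+1$ tend to infinity and the group is countable, $\mathrm{K}_0(\F^{(\infty)})$ is free of countably infinite rank, i.e.\ $\mathbb{Z}^{\infty}\cong\mathbb{Z}[t]$.

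For the trace I would work through the matrix representation $\Pi^{(n)}$. Because the Hutchinson measure for the tent map is Lebesgue measure $dx$ on $[0,1]$ and obeys the self-similarity identity $\int f\,dx=\tfrac12\int f\circ\gamma_0\,dx+\tfrac12\int f\circ\gamma_1\,dx$, the functionals $\tau_n(T)=2^{-n}\int_0^1 \mathrm{Tr}(\Pi^{(n)}(T)(x))\,dx$ are compatible with the embeddings $\F^{(n)}\hookrightarrow\F^{(n+1)}$ (using that $\Pi^{(n+1)}(T\otimes I)(x)$ is block-diagonal with blocks $\Pi^{(n)}(T)(\gamma_0 x)$ and $\Pi^{(n)}(T)(\gamma_1 x)$) and so assemble to the tracial state $\tau^{(\infty)}$. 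Now for a projection $T\in\F^{(n)}$ the fibre algebra is all of $M_{2^n}(\comp)$ for $0<x<1$, and $x\mapsto\Pi^{(n)}(T)(x)$ is a norm-continuous projection-valued function on $[0,1]$, hence of constant rank $k\in\mathbb{Z}$. Therefore $\tau_n(T)=2^{-n}\int_0^1 k\,dx=k/2^{n}$, giving $\tau^{(\infty)*}(\mathrm{K}_0(\F^{(\infty)}))\subseteq\mathbb{Z}[\tfrac12]$.

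For the reverse inclusion it suffices to realize $2^{-n}$ at each level. Comparing ranks at the endpoints, the constant rank $k$ equals both $m_1+m_2$ at $x=1$ and $r_1+\cdots+r_n$ at $x=0$; this is precisely the constraint cutting out $\mathrm{K}_0(\F^{(n)})\cong\{((m_1,m_2),(r_1,\dots,r_n)):m_1+m_2=\sum_i r_i\}$, since every minimal projection of every block of $D(0)$ and $D(1)$ maps to a rank-one projection of $M_{2^n}(\comp)$ (checked block by block from the explicit forms of $\Pi^{(n)}(\K(X^{\otimes n}))$ and of $\mathrm{C}(\cdot,\tfrac12,\cdot)$). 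The common value $k$ runs over all of $\mathbb{Z}$, so $\tau_n^*(\mathrm{K}_0(\F^{(n)}))=2^{-n}\mathbb{Z}$; taking the union over the increasing tower yields $\tau^{(\infty)*}(\mathrm{K}_0(\F^{(\infty)}))=\bigcup_n 2^{-n}\mathbb{Z}=\mathbb{Z}[\tfrac12]$. I expect the freeness assertion to be the main obstacle: torsion-freeness is far from sufficient, and everything hinges on the explicit image computation showing the cokernels are free. Once that is in hand, both the abstract group identification and the scaling behaviour of the trace follow cleanly.
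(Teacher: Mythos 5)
Your argument is correct, and for the main assertion (freeness) it takes a genuinely different route from the paper. The paper proves freeness by evaluating the entire family of extreme model traces $\{\tau^{(r)}\}_{r\ge 0}$ on ${\rm K}_0(\F^{(\infty)})$: it defines $\varphi([p])=(\tau^{(r)}(p))_r\in\prod_r{\mathbb R}$, computes $\varphi$ on the bases $\{e^n_k\}$ of each ${\rm K}_0(\F^{(n)})$ to show that $\varphi\circ i_{n*}$ is injective, and identifies the image as the free group on the explicit elements $c_n=\varphi([p^n])$; this requires importing the classification of traces on the core from \cite{KW3} and carefully describing how each $\tau^{(r)}$ restricts to $\F^{(n)}$ through the matrix representation. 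You instead work directly with the connecting matrices $A^{(n,n+1)}$ of Proposition \ref{prop:K-group}: your identification of ${\rm Im}(A^{(n,n+1)})$ as the kernel of $w\mapsto w_{n+2}-(w_1+\cdots+w_{n+1})$ checks out (both inclusions follow by solving for the coefficients, with $b=-w_1$, $a_{j-1}=w_j$ for $2\le j\le n$ and $a_n=w_{n+1}+2w_1$), so the cokernel is ${\mathbb Z}$, each injection splits, and the limit is free by concatenating bases along the filtration --- elementary linear algebra over ${\mathbb Z}$ that avoids the trace classification entirely for this part. For the trace statement your constant-rank argument (continuity of $x\mapsto\Pi^{(n)}(T)(x)$ on the connected space $[0,1]$ forces constant rank $k$, whence $\tau^{(\infty)}(T)=k/2^{n}$, with $k=1$ realized at every level) is a clean substitute for the paper's computation with the generators $c_n$; the one point you should make explicit is the identification of $\tau^{(\infty)}|_{\F^{(n)}}$ with integration of the normalized matrix trace against Lebesgue measure, which follows from the paper's description $\tau^{(\infty)}_i=\pi_i(\mu_H)$ together with the self-similarity of Lebesgue measure that you already invoke to check compatibility with the embeddings. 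What the paper's heavier approach buys is extra structure: the embedding $\varphi$ exhibits ${\rm K}_0(\F^{(\infty)})$ inside $\prod_r{\mathbb R}$ with the distinguished generating set $\{c_n\}$ satisfying $\beta([p^n])=[p^{n+1}]$, which is exactly what the final section uses to identify the canonical endomorphism with the unilateral shift; your proof establishes the theorem as stated but would need to be supplemented by such a normal form to recover that later result.
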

\begin{proof}
Since each $n$ $\Phi^{(n,n+1)}$ is injective, the canonical map 
of ${\rm K}_0(\F^{(n)})$ to ${\rm K}_0(\F^{(\infty)})$ 
is injective, for example, see Exercises 6.7 in Rodam {\it et al.} \cite{Ro}. 
By Theorem 7.8. of \cite{KW3} we know that the extreme tracial states on 
the core of the \cst -algebra
associated with the tent map are exactly the model traces 
\[
\{\tau^{(\infty)}\} \bigcup
\{\tau^{(1/2,r)}\,|\,r=0,1,2,\dots \}.
\]
where $1/2$ is the unique branched point of the tent map. 
Put $\tau^{(r)} = \tau^{(1/2,r)}$. Then they are  described 
in Example 7.1. of \cite{KW3} as follows: 
We define discrete measures $\mu_i^{(r)}$on $[0,1]$ for $0 \le i \le r$ by
\[
 \mu_i^{(r)}(f) = \frac{1}{2^{r-i}}\sum_{(j_1,\dots,j_{r-i}) \in
\{\,1,2\,\}^{r-i}}
    f(\gamma_{j_1}\circ \cdots \circ \gamma_{j_{r-i}}(1/2)), 
\ \ \ (f \in C[0,1]).
\]
It is enough to define traces $\tau^{(r)}$ on $\F^{(\infty)}$ for $r \ge 0$ 
by the restriction 
$\tau^{(r)}_i = \tau^{(r)}|_{K(X_{\gamma}^{\otimes i})}$ as 
 \[
  \tau^{(r)}_i =
\begin{cases}
 & \pi_i(\mu^{(r)}_i) \quad (0 \le i \le r) \\
 & 0 \quad \quad (r+1 \le i)
\end{cases}
 \]
for each $i$, where $\pi_i(\mu^{(r)}_i)$ is the trace on 
$K(X_{\gamma}^{\otimes i})$ corresponding to $\mu^{(r)}_i$ 
using Morita equivalence. 
We can also define a trace $\tau^{(\infty)}$ on $\F^{(\infty)}$ by
\[
 \tau^{(\infty)}_i = {\pi}_i(\mu_{\infty}),
\]
for each $i$ where $\mu_{\infty} = \mu_H $ 
is the normalized Lebesgue measure on
$[0,1]$ and coincides with the Hutchinson measure.  
The  set of extreme traces on the core of $C^*$-algebras 
associated with the tent map on $[0,1]$ is exactly 
 $\{\,\tau^{(0)},\tau^{(1)},\dots,\tau^{(\infty)}\,\}$.

Fix an integer $n \geq 0$ for a while. 
We need to describe the restriction 
$\tau^{(r)}|_{\F^{(n)}}$ of these model traces $\tau^{(r)}$ 
on $\F^{(n)}$ in termes of matrix representations. It is 
clear that $\tau^{(r)}|_{K(X_{\gamma}^{\otimes n})} = 0$ 
for $n \geq r+1$ by definition. 
We denote  by $tr$ the normalized trace on the full matrix algebra.

Let $n = 0$. Then $\F^{(0)} = A$ and 
$\tau^{(0)}|_{\F^{(0)}}(a) = a(\frac{1}{2})$ for $a \in A$.

Let $n = 1$. Then $\F^{(1)} = A\otimes I + K(X_{\gamma})$ 
and  $\Pi^{(1)}(\F^{(1)}) \subset M_{2}(A)$. And 
$$
\tau^{(1)}|_{\F^{(1)}}(T) = tr(\Pi^{(1)}(T) (\frac{1}{2}))
$$ 
for 
$T \in \F^{(1)}$. Let $E^{(1)}$ be the projection of the fiber 
$$\Pi^{(1)}(\F^{(1)})(1)
= \Pi^{(1)}(K(X_{\gamma}))(1) \oplus C(1,\frac{1}{2},1)
$$ 
onto 
$C(1,\frac{1}{2},1) = {\mathbb C}$. Then 
$$
\tau^{(0)}|_{\F^{(1)}}(T) = E^{(1)}(\Pi^{(1)}(T)(1)).
$$ In fact, 
if $T = a \otimes I$, then 
$E^{(1)}(\Pi^{(1)}(T)(1))= a(\gamma_0(1))= a(\frac{1}{2})$. If 
$T$ is in $K(X_{\gamma})$, then 
$\tau^{(0)}(T) = 0 = E^{(1)}(\Pi^{(1)}(T)(1))$ by definition.

Let $n \geq 2$ in general. By induction, we can show the following: 
Recall that $\Pi^{(n)}(\F^{(n)}) \subset M_{2^n}(A)$ . Then 
$$
\tau^{(n)}|_{\F^{(n)}}(T) = tr(\Pi^{(n)}(T) (\frac{1}{2}))
$$ for 
$T \in \F^{(n)}$. 
Let $E^{(n-1)}$ be the projection of the fiber 
$$
\Pi^{(n)}(\F^{(n)})(1)
= \Pi^{(n)}(K(X_{\gamma}^{\otimes n}))(1) \oplus C(1,\frac{1}{2},1)
$$ 
onto the direct sum component
$C(1,\frac{1}{2},1) = M_{2^{n-1}}(\mathbb C)$. Then 
$$
\tau^{(n-1)}|_{\F^{(n)}}(T) = tr(E^{(n-1)}(\Pi^{(n)}(T)(1))), 
$$
for $T \in \F^{(n)}$.  
Take $r = n-p$ for some $p$ with $2 \leq p \leq n$. 
Let $E^{(n-p)}$ be the projection of the fiber 
$$
\Pi^{(n)}(\F^{(n)})(0)
= \Pi^{(n)}(K(X_{\gamma}^{\otimes n}))(0) \oplus 
\oplus_{p=2}^n C(0,\frac{1}{2},p)
$$
onto the direct sum component 
$C(0,\frac{1}{2},p) = M_{2^{n-p}}(\mathbb C)$.
Then 
$$
\tau^{(n-p)}|_{\F^{(n)}}(T) = tr(E^{(n-p)}(\Pi^{(n)}(T)(0))).
$$ 
for $T \in \F^{(n)}$. In fact, if $T = S\otimes I$ for some 
$S \in L(X_{\gamma}^{\otimes n-p})\cap \F^{(n-p)}$, then 
$ E^{(n-p)}(\Pi^{(n)}(T)(0))= \Pi^{(n-p)}(S)(\frac{1}{2})$, 
since $p-(n-p)$ orbit through the branched point $\frac{1}{2}$ 
is 
\[
{\rm 0} \overset{\gamma_0}\to {\rm 0} 
\overset{\gamma_0}\to {\rm 0} \dots {\rm 0}
\overset{\gamma_{1}}\to {\rm 1}  
\overset{\gamma_0,\gamma_1}
\Longrightarrow {\rm \frac{1}{2}}
\begin{matrix}
\ & \nearrow \\
\ & ...      \\
\ & \searrow
\end{matrix}
\begin{matrix}
\ & \nearrow \\
\ & ...      \\
\ & \searrow
\end{matrix}
\]
if $T = S\otimes I$ for some 
$S \in K(X_{\gamma}^{\otimes k})\cap \F^{(k)}$ with $k \geq n-p$,  
then 
$ E^{(n-p)}(\Pi^{(n)}(T)(0))= 0$, because 
$\Pi^{(n)}(T)(0)$ and $C(0,\frac{1}{2},p)$ are orthogonal by 
Proposition 
\ref{prop:orthogonal}. 
Therefore 
$$
\tau^{(n-p)}|_{\F^{(n)}}(T) = tr(E^{(n-p)}(\Pi^{(n)}(T)(0))).
$$ 
for $T \in \F^{(n)}$.

We define a map 
$$
\varphi : {\rm K}_0(\F^{(\infty)}) 
\rightarrow \prod _{n =0}^{\infty} {\mathbb R}
$$ 
by $\varphi ([p]) = (\tau^{(n)}(p))_n$ for projections $p$ in 
$\F^{(\infty)}$. 
Define  projections $p^0 = I \in A$ and $p^n \in K(X_{\gamma}^{\otimes n})$ 
for $n = 1,2,3,\dots $ by constant matrix function of $J_n$ 
such that for any $x \in [0,1]$ 
$\Pi^{(n)}(p^n)(x)$ is the rank-one projection $J_n$, where    
any entry of $J_n$ is $\frac{1}{2^n}$. Put 
$$
c_n := \varphi ([p^n]) 
= (0,0,\dots,0,\frac{1}{2^n}, \frac{1}{2^n}, \frac{1}{2^n}, \dots ).
$$
We shall show that
$\varphi ({\rm K}_0(\F^{(\infty)}) )$ is isomorphic to 
the countably generated 
free abelian group ${\mathbb Z}^{\infty}$. In fact it is 
enough to show that  
$\varphi ({\rm K}_0(\F^{(\infty)}) )$ is 
generated by these ${\mathbb Z}$-independent elements 
$\{ c_n \in \prod _{n =0}^{\infty} {\mathbb R}\ 
 | \ n = 0,1,2,\dots \}$. We note that 
$\varphi ({\rm K}_0(\F^{(\infty)}) )$ contains 
$c_n -2c_{n+1} = (0,0,\dots,0, \frac{1}{2^n},0,\dots )$ and 
$$
\tau^{(n)}_*({\rm K}_0(\F^{(\infty)}) ) 
= \{\frac{m}{2^n} \ | \ m \in {\mathbb Z}\}. 
$$
Consider the canonical inclusion $i_n : \F^{(n)} \rightarrow \F^{(\infty)}$, 
then 
$$
\varphi ({\rm K}_0(\F^{(\infty)}) ) 
 = \cup_{n=0}^{\infty}\varphi(i_n*({\rm K}_0(\F^{(n)}))).
$$ 
We shall also show that $\varphi$ is one to one. Therefore 
it is enough to show that 
$\varphi(i_n*({\rm K}_0(\F^{(n)})))$ is contained in the 
subgroup generated by 
$\{ c_m \in \prod _{n =0}^{\infty} {\mathbb R}
 | \  m = 0,1,2, \dots \}$ and 
$\varphi \circ i_n*: {\rm K}_0(\F^{(n)}) \rightarrow 
\prod _{n =0}^{\infty} {\mathbb R}$ is one to one  
for each $n$ by Proposition 6.2.5 in \cite{Ro}. 
Let $\{e_1^n, e_2^n, \dots, e_{n+1}^n\}$ be a basis 
of ${\rm K}_0(\F^{(n)})$ chosen before. Then for $k = 1,2,\dots,n+1$, 
we have 
$$\tau^{(0)}(e_k^n) = \delta_{k,2}, \ \  
\tau^{(1)}(e_k^n) = \frac{1}{2}\delta_{k,3},
$$
$$ 
\tau^{(2)}(e_k^n) = \frac{1}{2^2}\delta_{k,4},\ \ 
\dots, 
\tau^{(n-2)}(e_k^n) = \frac{1}{2^{n-2}}\delta_{k,n}, \ \ 
\tau^{(n-1)}(e_k^n) = \frac{1}{2^{n-1}}\delta_{k,n+1}, 
$$ and 
$$\tau^{(n)}(e_k^n) = \frac{1}{2^{n}}.
$$ Moreover for any 
$m \geq n+1$, 
$$
\tau^{(m)}(e_k^n) = \frac{1}{2^{n}}.
$$ 
Hence the image of the basis 
$\{e_1^n, e_2^n, \dots, e_{n+1}^n\}$ under $\varphi$ 
is independent over ${\mathbb Z}$. Thus 
$\varphi \circ i_n*: {\rm K}_0(\F^{(n)}) \rightarrow 
\prod _{n =0}^{\infty} {\mathbb R}$ is one to one. 
We also have that $\varphi(i_n*({\rm K}_0(\F^{(n)})))$ is 
contained in the subgroup generated by 
generated by 
$\{ \ c_m \in \prod _{n =0}^{\infty} {\mathbb R}
 | \  m = 0,1,2, \dots \}$. 
Therefore   
${\rm K}_0(\F^{(\infty)})$ is isomorphic to  ${\mathbb Z}^{\infty}$. 
It is easy to see that 
${\tau}^{{\infty}*}({\rm K}_0(\F^{\infty}) = {\mathbb Z}[\frac{1}{2}]$.

\end{proof}

\begin{rem}Compare the tent map with the following 
self-similar map.  Let $K=[0,1]$, $\gamma_0(y)=(1/2)y$, $\gamma_1(y) =
 (1/2)y+1/2$.  Then $\gamma = (\gamma_0,\gamma_1)$ has no branched points.  
For this $\gamma$, $\O_{\gamma} \simeq \O_{2}$, and $\F^{(\infty)}$ is the UHF \cst
 -algebra $M_{2^{\infty}}(\comp)$, and it is well known that ${\rm
 K}_0(\F^{(\infty)})$ is the group ${\mathbb Z}[\frac{1}{2}]$ of
 $2$-adic integers and totally ordered. Note that 
${\mathbb Z}[\frac{1}{2}]$ is not isomorphic to ${\mathbb Z}^{\infty}$. 
In ${\mathbb Z}[\frac{1}{2}]$, for any $a$, $2x = a$ has a solution $x$. 
But in ${\mathbb Z}^{\infty}$, $2x = c_1$ has no solution $x$. 
\end{rem}

\subsection
{Sierpinski Gasket case}
We calculate the K-group of the core of the \cst -algebra associated with
the self similar map which gives Sierpinski Gasket $S$.  

\begin{lemma} Let $S$ be the Sierpinski Gasket $S$. Then we have that 
\[
   {\rm K}_0({\rm C}(S)) = {\mathbb Z}, \quad    {\rm K}_1({\rm C}(S)) =
 {\mathbb Z}^{\infty},  
\]
\end{lemma}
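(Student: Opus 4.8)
The plan is to compute the topological ${\rm K}$-theory of the Sierpinski gasket $S$ and transport it to operator ${\rm K}$-theory through Gelfand duality, exploiting the self-similar tower of approximations. Since ${\rm C}(S)$ is commutative we have ${\rm K}_i({\rm C}(S))\cong {\rm K}^i(S)$. I would realize $S$ as the nested intersection $S=\bigcap_{n}S_n$, where $S_n=\bigcup_{(j_1,\dots,j_n)\in\Sigma^n}(\gamma_{j_1}\circ\cdots\circ\gamma_{j_n})(\mathcal T)$ is the union of the $3^n$ solid level-$n$ triangles, with bonding inclusions $S_{n+1}\hookrightarrow S_n$. Restriction of functions then exhibits ${\rm C}(S)$ as the inductive limit $\varinjlim_n {\rm C}(S_n)$: the union of the images of the ${\rm C}(S_n)$ is a unital $*$-subalgebra of ${\rm C}(S)$ that separates points, hence is dense by Stone--Weierstrass. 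By continuity of ${\rm K}$-theory with respect to inductive limits (see \cite{Ro}), ${\rm K}_i({\rm C}(S))=\varinjlim_n {\rm K}_i({\rm C}(S_n))$, so the whole computation reduces to the finite approximations and their connecting maps.

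Next I would pin down the homotopy type of each $S_n$. Each $S_n$ is a connected compact planar region whose complement inside $\mathcal T$ consists of the open triangles removed at levels $1,\dots,n$, whose number is $1+3+\cdots+3^{n-1}=(3^n-1)/2$. Such a planar set is homotopy equivalent to a wedge of $(3^n-1)/2$ circles, so $\check H^0(S_n)=\mathbb Z$, $\check H^1(S_n)=\mathbb Z^{(3^n-1)/2}$, and $\check H^k(S_n)=0$ for $k\ge 2$. Consequently ${\rm K}^0(S_n)\cong\mathbb Z$, generated by the class of the unit, while ${\rm K}^1(S_n)\cong\mathbb Z^{(3^n-1)/2}$, with one generator for each removed triangle, represented by a loop encircling that hole.

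Finally I would analyze the bonding maps $\iota_n^{*}\colon {\rm K}^{i}(S_n)\to {\rm K}^{i}(S_{n+1})$ induced by $S_{n+1}\hookrightarrow S_n$. On ${\rm K}^0$ they carry the unit to the unit and are isomorphisms, so $\varinjlim_n {\rm K}^0(S_n)=\mathbb Z$ and ${\rm K}_0({\rm C}(S))=\mathbb Z$. On ${\rm K}^1$ the decisive point is that every triangle removed up to level $n$ is still removed at level $n+1$, so the encircling loops persist and remain linearly independent, while each new level-$(n+1)$ triangle contributes a fresh independent generator; dually, a small loop around a new hole bounds in the coarser $S_n$. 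Hence $\iota_n^{*}$ is a split injection $\mathbb Z^{(3^n-1)/2}\hookrightarrow\mathbb Z^{(3^{n+1}-1)/2}$ sending basis elements to distinct basis elements. A direct limit of finitely generated free abelian groups along split injections with rank tending to infinity is the countably generated free abelian group, so ${\rm K}_1({\rm C}(S))=\varinjlim_n \mathbb Z^{(3^n-1)/2}\cong\mathbb Z^{\infty}$. The main obstacle, and the only genuinely geometric input, is verifying that $\iota_n^{*}$ is split injective with exactly this combinatorics, i.e. that refining the approximation neither annihilates nor merges any of the old holes; once this is established the limit is forced to be $\mathbb Z^{\infty}$ rather than a divisible-type group such as $\mathbb Z[\tfrac13]$.
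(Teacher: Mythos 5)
Your proposal is correct and follows essentially the same route as the paper: write $C(S)$ as the inductive limit of the $C(S_n)$ for the decreasing planar approximations $S_n$, observe each $S_n$ is homotopy equivalent to a wedge of circles so that ${\rm K}^0(S_n)=\mathbb Z$ and ${\rm K}^1(S_n)$ is free of rank equal to the number of removed triangles, and pass to the limit along split injections. Your write-up is in fact more careful than the paper's sketch (which loosely says the number of ${\rm K}_1$-generators ``increases by $3$'' where the increment at stage $n$ is really $3^n$, giving your count $(3^n-1)/2$), but the underlying argument is the same.
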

\begin{proof} The K group of the Sierpinski Gasket $S$ is calculated by 
the inductive limit construction.  The inductive limit of the figures $S_n$
is obtained by cutting off 3 open discs from the unit disc successively is
 homeomorphic to $S$. Since  ${\rm K}_0({\rm C}(S_n))$ is isomorphic to
 ${\mathbb Z}$, we have that ${\rm K}_0({\rm C}(S))$ is also isomorphic to
 ${\mathbb Z}$.  Since the number of free generators of ${\rm K}_1$
 increases by  $3$, ${\rm K}_1({\rm C}(S))$ is isomorphic to ${\mathbb
 Z}^{\infty}$.  
\end{proof}

We calculate the K-groups of the finite cores of the \cst-algebra
associated with the Sierpinski Gasket using explicit calculation
of exp maps.  For the Sierpinski Gasket $S$ , K${}_1({\rm C}(S))$ contains many
elements in contrast to the tent map case.  

We denote the following algebras:
\begin{align*}
  D({\rm P})= & \Pi^{(n)}(\K(X^{\otimes n}))({\rm P})
  \oplus  \bigoplus_{p=0}^{n-1}{\rm C}({\rm P},{\rm T},p)
 \simeq M_{(1/2)(3^{n}+1)}(\comp) \oplus
 \bigoplus_{p=0}^{n-1}M_{3^{p}}(\comp),  \\
  D({\rm Q})= &\Pi^{(n)}(\K(X^{\otimes n}))({\rm Q}) \oplus
        \bigoplus_{p=0}^{n-1}H^{n-p+1}\simeq M_{(1/2)(3^{n}+1)}(\comp)
 \oplus \bigoplus_{p=0}^{n-1}M_{3^{p}}(\comp)), \\
  D({\rm R})= & \Pi^{(n)}(\K(X^{\otimes n}))({\rm R}) \oplus
        \bigoplus_{p=0}^{n-1}I^{n-p+1}\simeq M_{(1/2)(3^{n}+1)}(\comp)
 \oplus \bigoplus_{p=0}^{n-1}M_{3^{p}}(\comp),  \\
  J_n = & \{\,T \in {\rm C}(S,M_{N^n}(\comp))\,|\,T({\rm P})=O,\, T({\rm
 Q})=O,\,
 T({\rm R})=O\,\},   \\
  B = & ( \F^{(n)} = ) \{\,T \in {\rm C}(
 S,M_{N^n}(\comp))\,|\,T({\rm P}) \in D({\rm P}),\, T({\rm Q})
 \in D({\rm Q}),\, T({\rm R}) \in D({\rm R})\,\},   \\
  C = & D({\rm P}) \oplus D({\rm Q}) \oplus D({\rm R}).  
\end{align*}
We have the exact sequence:
\[
   \{0\} \to J_n \to B  \to C \to \{0\}.  
\]
Using this, it holds the following 6-term exact sequence:
\[
\begin{CD}
   {\rm K}_0(J_n) @>{}>> {\rm K}_0(\F^{(n)}) @>{}>> {\rm K}_0(C) \\
   @A{{\rm ind}}AA
    @.
     @VV{{\rm exp}}V \\
   {\rm K}_1(C) @<<{}< {\rm K}_1(\F^{(n)}) @<<{}< {\rm K}_1(J_n). 
\end{CD}
\]

We need to  consider $\tilde{J_n}$, which is the adjoining unit
\cst-algebra of
$J_n$,  when we calculate ${\rm K}_1(J_n)$.  We put $J=J_0$.  
Since $J_n = J_0 \otimes M_{3^n}(\comp)$, the K${}_1$-group of
$\tilde{J_n}$ is identical to the K${}_1$-group of $\tilde{J}$.  

To compute K${}_1(\tilde{J})$, we use the following 6 term
exact sequence of K-groups:
\[
\begin{CD}
   {\rm K}_0(J) @>{}>> {\rm K}_0({\rm C}(S)) @>{}>> {\rm K}_0(\comp^3) \\
   @A{{\rm ind}}AA
    @.
     @VV{{\rm exp}}V \\
      {\rm K}_1(\comp^3) @<<{}< {\rm K}_1({\rm C}(S)) @<<{}< {\rm
 K}_1(J).  
\end{CD}  
\]
Then it holds that 
\[
\begin{CD}
   {\rm K}_0(J) @>{}>>  {\mathbb Z} @>{}>> {\mathbb Z}^3 \\
   @A{{\rm ind}}AA
    @.
     @VV{{\rm exp}}V \\
       \{{\mathbf 0}\} @<<{}<  {\mathbb Z}^{\infty} @<<{}< {\rm
 K}_1(\tilde{J}).  
\end{CD}
\]
Since the identity $I$ in ${\rm C}(S)$ is mapped to $(1,1,1)$ in
$\comp^3$, the map from ${\rm K}_0({\rm C}(S))$ to ${\rm K}_0(\comp^3)$ is
injective.  It hollows that K${}_0(J)$ is equal to
$\{\mathbf 0\}$.  Then the following long exact sequence
\[
  \{\mathbf 0\} \to {\mathbb Z} \to {\mathbb Z}^3 \to {\rm
  K}_1(\tilde{J}) \to {\mathbb Z}^{\infty} \to \{{\mathbf 0}\}
\]
holds.  Then it holds that 
\[
  {\rm K}_1(\tilde{J}) \simeq \left({\mathbb Z}^3/{\mathbb Z}\right) \oplus {\mathbb
  Z}^{\infty} \simeq 
{\mathbb Z}^2  \oplus {\mathbb Z}^{\infty}.  
\]

We substitute known ${\rm K}$ group:
\[
\begin{CD}
   \{\mathbf 0\} @>{}>> {\rm K}_0(\F^{(n)}) @>{}>> {\mathbb Z}^{n+1}\oplus
 {\mathbb Z}^{n+1}\oplus {\mathbb Z}^{n+1}  \\
   @A{{\rm ind}}AA
    @.
     @VV{{\rm exp}}V \\
   \{\mathbf 0\}  @<<{}< {\rm K}_1(\F^{(n)}) @<<{}< {\rm K}_1(J_n).  
\end{CD}
\]
Since $C$ has a unit, we can write exp map explicitly using Proposition 
12.2.2 (ii) of Rordam {\it et al.} \cite{Ro}.  
Since the Sierpinski Gasket is connected, for projections $p_1$, $p_2$,
$p_3$, if their dimensions are not equal, there does not exist an
self-adjoint element $T \in \F^{(n)}$ such that $T({\rm P})=p_1$, $T({\rm Q})=p_2$ and
$T({\rm R})=p_3$, and if their dimensions are equal there exists an Hermit
element $T \in \F^{(n)}$ such that $T({\rm P})=p_1$, $T({\rm Q})=p_2$ and
$T({\rm R})=p_3$.  

By putting ${\mathbf m}^1=(m_1^1,\dots,m_n^1)$, ${\mathbf
m}^2=(m_1^2,\dots,m_n^2)$, ${\mathbf m}^3=(m_1^3,\dots,m_n^3)$, 
$|{\mathbf m}^i| = m_1^i + m_2^i + \dots + m_n^i$, it holds that 
\begin{align*}
    {\rm K}_0(\F^{(n)}) \simeq 
& \text{ The group generated by }
\{\,({\mathbf m}_1,{\mathbf m}_2,{\mathbf
    m}_3) \in {\mathbb Z}_+^{n+1}\oplus {\mathbb Z}_+^{n+1}\oplus {\mathbb
    Z}_+^{n+1}\,|\, |{\mathbf m}^1|=|{\mathbf m}^2|=|{\mathbf m}^3|\,\} \\
   = &\{\,((m_1,\dots,m_{n+1}),(r_1,\dots,r_{n+1}),(s_1,\dots,s_{n+1}))
   \in {\mathbb Z}^{n+1} \oplus {\mathbb Z}^{n+1} \oplus {\mathbb
   Z}^{n+1} \\
  & \sum_{i=1}^{n+1}m_i=\sum_{i=1}^{n+1}r_i=\sum_{i=1}^{n+1}s_i\,\}.  
\end{align*}

We calculate the embedding of K groups of the finite cores derived from
the embedding from $\F^{(n)}$ to $\F^{(n+1)}$.  The generators of ${\rm
K}_0(\F^{(n)})$ are given by paths of projections in $\F^{(n)}$, and
they are also paths of
projections in $\F^{(n+1)}$.  The embedding is calculated using the
matrix representation.  For the calculation, we present the generators
of ${\rm K}_0$-groups of $D({\rm P})$, $D({\rm Q})$ and $D({\rm R})$.
Let $p^{\rm P}_i$ be a
minimal projection of ${\rm C}({\rm P},T,i+1)$, 
$p^{\rm Q}_i$ be a minimal projection of ${\rm H}^{n-i}$ and
$\Pi^{(n)}(\K(X^{\otimes
n}))({\rm Q})$, $p^{\rm R}_i$ be a minimal projection of ${\rm
I}^{n-i}$  for $1 \le i \le n$.  Let $p^{\rm P}_{n+1}$ be a minimal
projection in
$\Pi^{(n)}(\K(X^{\otimes n}))({\rm P})$, $p^{\rm Q}_{n+1}$ be a a
minimal projection
in $\Pi^{(n)}(\K(X^{\otimes n}))({\rm Q})$ and $p^{{\rm R}}_{n+1}$ be a
minimal projection in $\Pi^{(n)}(\K(X^{\otimes n}))({\rm R})$.  
Then it holds
\begin{align*}
 {\rm K}_0(D({\rm P})) = &\{\,m^1_1[p^{\rm P}_1] + \dots +
 m^1_{n+1}[p^{\rm P}_{n+1}]\,|\,m^1_i \in {\mathbb Z}\,\}
 \simeq \{\,(m^1_1,\dots,m^1_{n+1})\,|\,m^1_i \in {\mathbb Z}\,\} \\
 {\rm K}_0(D({\rm Q})) = &\{\,m^2_1[p^{\rm Q}_1] + \dots +
 m^2_{n+1}[p^{\rm Q}_{n+1}]\,|\,m^2_i \in {\mathbb Z}\,\}
 \simeq \{\,(m^2_1,\dots,m^2_{n+1})\,|\,m^2_i \in {\mathbb Z}\,\} \\
 {\rm K}_0(D({\rm R})) = &\{\,m^3_1[p^{\rm R}_1] + \dots +
 m^3_{n+1}[p^{\rm R}_{n+1}]\,|\,m^3_i \in {\mathbb Z}\,\}
 \simeq \{\,(m^3_1,\dots,m^1_{n+1})\,|\,m^3_i \in {\mathbb Z}\,\}.   
\end{align*}

We denote by $\Psi^{n,n+1}$ the map of embedding from ${\rm
K}_0(\F^{(n)})$ to ${\rm K}_0(\F^{(n+1)})$.  
\begin{align*}
 &
 \Psi^{n,n+1}((0,\dots,1_{i^{\rm P}},\dots,0),(0,\dots,1_{i^{\rm Q}},\dots,0),
(0,\dots,1_{i^{\rm R}},\dots,0)) \\
 = &((0,\dots,1_{i^{\rm P}+1},\dots,0),(0,\dots,1_{{i^{\rm R}}+1},\dots,0),
 (0,\dots,1_{{i^{\rm Q}}+1},\dots,0)) \\
 & +  ((1_1,\dots,0,\dots,1_{n+2}),(1_1,\dots,0,\dots,1_{n+2}),
 (1_1,\dots,0,\dots,1_{n+2})).  
\end{align*}
Then K${}_0(\F^{(\infty)})$ is the inductive limit group of the above 
inclusion maps $\{\Psi^{(n,n+1)}\}_{n=0,1,2,\dots}$.  

We take a basis of 
${\rm K}_0(\F^{(\infty)}) \simeq {\mathbb Z}^{3n+1}$ as follows:
\begin{align*}
 a^{n}_i = & ((0,\dots,0,1_i,0,\dots,0),(0,\dots,0,1_{n+1}),(0,0,\dots,1_{n+1})) 
   \quad i=1,\dots,n+1, \\
 b^{n}_i = & ((1_1,0,\dots,0),(0,\dots,0,1_{i},0,\dots,0),(0,\dots,0,1_{n+1}))
 \quad i=1,\dots,n, \\
 c^n_i = & ((1_1,0,\dots,0),(0,\dots,0,1_{n+1}),(0,\dots,0,1_i,0,\dots,0)) 
 \quad i=1,\dots,n.   
\end{align*}
We do the following preliminary calculation:
\begin{align*}
  & ((1_1,0,\dots,0,1_{n+2}),
 (1_1,0,\dots,0,1_{n+2}),(1_1,0,\dots,0,1_{n+2})) \\
= & ((0,\dots,0,1_{n+2}), (0,\dots,0,1_{n+2}),(0,\dots,0,1_{n+2})) 
  +  ((1_1,0,\dots,0), (1_1,0,\dots,0),(1_1,0,\dots,0)) \\
 = & a^{n+1}_{n+2} 
  + ((1_1,\dots,0),(1_1,\dots,0),(0,\dots,1_{n+1})) 
  + ((0,\dots,0),(0,\dots,0),(1_{1},\dots,-1_{n+1})) \\
 = & a^{n+1}_{n+2}  + b^{n+1}_1 + (c^{n+1}_1 - a^{n+1}_1) 
 =  - a^{n+1}_1 + a^{n+1}_{n+2}  + b^{n+1}_1 + c^{n+1}_1.  
\end{align*}
We calculate the action of $\Phi^{(n,n+1)}$ to each basis:
\begin{align*}
 \Phi^{(n,n+1)}(a^n_i)
   = &
 ((0,\dots,0,1_{i+1},0,\dots,0),(0,\dots,0,1_{n+1}),(0,\dots,0,1_{n+1})) \\
  &  + ((1_1,0,\dots,0,1_{n+2}),
 (1_1,0,\dots,0,1_{n+2}),(1_1,0,\dots,0,1_{n+2})) \\
  = & a^{n+1}_{i+1} + (- a^{n+1}_1 + a^{n+1}_{n+2}  + b^{n+1}_1 +
 c^{n+1}_1 ) \\
 = & -a^{n+1}_1 + a^{n+1}_{i+1} + a^{n+1}_{n+2}  + b^{n+1}_1 +
 c^{n+1}_1.  \\
  \Phi^{(n,n+1)}(b^n_i)
 = &
 ((0,1_1,0,\dots,0),(0,\dots,0,1_{i+1},0,\dots,0),(0,\dots,0,i_{n+2})) \\
   &  + ((1_1,0,\dots,0,1_{n+2}),
 (1_1,0,\dots,0,1_{n+2}),(1_1,0,\dots,0,1_{n+2})) \\
 = & ((0,1_1,0,\dots,0),(0,\dots,0,1_{n+2}),(0,\dots,0,i_{n+2}))  \\
   &  + ((0,\dots,0),(0,\dots,0,1_{i+1},0,\dots,-1_{n+2}),(0,\dots,0)) \\
   &  + ((1_1,0,\dots,0,1_{n+2}),
 (1_1,0,\dots,0,1_{n+2}),(1_1,0,\dots,0,1_{n+2})) \\
 =&  a^{n+1}_2 +(b^{n+1}_{i+1}-a^{n+1}_1)+ (- a^{n+1}_1 + a^{n+1}_{n+2}  +
 b^{n+1}_1 + c^{n+1}_1 )\\
 = & -2a^{n+1}_1 + a^{n+1}_2 + a_{n+2}^{n+1} + b^{n+1}_1 + b^{n+1}_{i+1}
 + c^{n+1}_1.  
\\
 \Phi^{(n,n+1)}(c^n_i) 
 = &
 ((0,1_1,0,\dots,0),(0,\dots,0,1_{n+2}),(0,\dots,0,1_{i+1},0,\dots,0)) \\
 &   + ((1_1,0,\dots,0,1_{n+2}),
 (1_1,0,\dots,0,1_{n+2}),(1_1,0,\dots,0,1_{n+2})) \\
 =&  ((0,1_1,0,\dots,0),(0,\dots,0,1_{n+2}),(0,\dots,0,1_{n+1})) \\
  &  + ((0,\dots,0),(0,\dots,0),(0,\dots,0,1_{i+1},0,\dots,-1_{n+2})) \\
 &   + ((1_1,0,\dots,0,1_{n+2}),
 (1_1,0,\dots,0,1_{n+2}),(1_1,0,\dots,0,1_{n+2})) \\
 = & a^{n+1}_2 + (c^{n+1}_{i+1} -a^{n+1}_1)+ 
      (- a^{n+1}_1 + a^{n+1}_{n+2}  + b^{n+1}_1 + c^{n+1}_1 )\\
 = & -2a^{n+1}_1 + a^{n+1}_2 + a_{n+2}^{n+1}+ b^{n+1}_1 + c^{n+1}_1 +
 c^{n+1}_{i+1}.  
\end{align*}
For the map $\Psi^{(n,n+1)}$, we can define a matrix $B^{(n,n+1)}$ using
the above bases.  

For $n=3$, we have
\[
 B^{(n,n+1)} = \begin{pmatrix} 
           -1 & -1 & -1 & -1 &-2 & -2& -2& -2 & -2 & -2 \\
            1 & 0  &  0 &  0 & 1 & 1 & 1 & 1 & 1 & 1 \\
            0 & 1  &  0 &  0 & 0 & 0 &  0& 0 & 0 & 0\\
            0 & 0  &  1	&  0 & 0 & 0 &  0& 0 & 0 & 0\\
            1 & 1  &  1	&  2 & 1 & 1 & 1 & 1 & 1 & 1 \\
            1 &	1  &  1 &  1 & 1 & 1 & 1 & 1 & 1 & 1\\
            0 & 0  &  0 &  0 & 1 & 0 & 0 & 0 & 0 & 0\\
            0 & 0  &  0 &  0 & 0 & 1 & 0 & 0 & 0 & 0\\
            0 & 0  &  0	&  0 & 0 & 0 & 1 & 0 & 0 & 0\\
            1 & 1  &  1 &  1 & 1 & 1 & 1 & 1 & 1 & 1\\
            0 & 0  &  0 &  0 & 0 & 0 & 0 & 1 & 0 & 0 \\
            0 & 0  &  0 &  0 & 0 & 0 & 0 & 0 & 1 & 0 \\
            0 & 0  &  0	&  0 & 0 & 0 & 0 & 0 & 0 & 1 \\
	       \end{pmatrix}.  
\]
To state the form for general $n$, we prepare some notations. 
For $a \in {\mathbb Z}$, $a_m$ denotes the row vector whose entries are
all $a$ and $a^m$ denotes the column vector whose entries are all $a$.   
$E_m$ denotes the identity matrix of dimension $m$, $O_m$ denotes
the zero matrix of dimension $m$ and $F_m$ denotes the $m$ dimensional
matrix such that $(F_m)_{1,j}=1$ and $(F_m)_{i,j}=0$ for $2 \le i \le m$ and
$1 \le j \le m$.  For general $n$, we have 
\begin{equation} \label{eq:inclusion}
 B^{(n,n+1)}
   = \begin{pmatrix}
    (-1)_n &  -1 & (-2)_n &  (-2)_n    \\
     E_n   &  0^n & F_n   & F_n   \\
     1_n   &   2  & 1_n   & 1_n   \\   
     1_n   &   1  & 1_n   & 1_n   \\
     O_n   &   0^n & E_n  &  O_n \\
     1_n   &  1    & 1_n  &  1_n \\ 
     O_n   &  0^n  & O_n  &  E_n 
     \end{pmatrix} 
\end{equation}

At last, we consider the ${\rm K}_1$ group of $\F^{(n)}$.  
The image of the exp map is ${\mathbb Z}^2$.  
We write a generators of the range of exp map explicitly.  
Let $\tilde{S}$ be the set obtained by identifying P, Q, R from the
Sierpinski gasket S.  A continuous map $h_{{\rm P}{\rm Q}}$ from $\tilde{S}$ to $\mathbb T$
whose winding number is equal $1$ around the circle from P to R, and
$0$ around the circle from P to S and R to S.  Then $[h_{{\rm P}{\rm Q}}]-[h_{{\rm P}{\rm R}}]$
is a generator of the image of exponential map.  Another generator is
constructed analogously.  
Then ${\rm K}_1(\F^{(n)})$ is isomorphic to ${\mathbb Z}^{\infty}$ and
the inclusion map from ${\rm K}_1(\F^{(n)})$ to ${\rm K}_1(\F^{(n+1)})$
is identity map by the canonical identification.  Then 
K${}_1(\F^{\infty})$ is isomorphic to K${}_1(S) \simeq {\mathbb
Z}^{\infty}$.  

\begin{prop} \label{prop:K-group-Sierpinski} Let $\F^{(\infty)}$ be the core of the \cst-algebra
 associated with the Sierpinski Gasket. Then we have that 
\begin{align*}
 {\rm K}_0(\F^{(\infty)}) \simeq & \lim_{n \to \infty}
          \left({\mathbb Z}^{3n+1} \overset{B^{(n,n+1)}}{\to} {\mathbb
 Z}^{3n+4} \right)\\
   {\rm K}_1(\F^{(\infty)}) \simeq & {\mathbb Z}^{\infty}.  
\end{align*}
\end{prop}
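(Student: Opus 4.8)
The plan is to invoke continuity of K-theory under inductive limits. Since the core $\F^{(\infty)} = \overline{\bigcup_n \F^{(n)}}$ is the \cst-inductive limit of the finite cores $\F^{(n)}$ along the embeddings $\F^{(n)} \hookrightarrow \F^{(n+1)}$, we have ${\rm K}_i(\F^{(\infty)}) \cong \varinjlim_n {\rm K}_i(\F^{(n)})$ for $i=0,1$, with connecting homomorphisms induced by these embeddings. Thus it suffices to identify each ${\rm K}_i(\F^{(n)})$ together with the maps $\Psi^{(n,n+1)}_*$, both of which have essentially been assembled above, and to read off the resulting directed systems.

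For the ${\rm K}_0$ statement I would extract the conclusion from the 6-term exact sequence attached to $\{0\} \to J_n \to \F^{(n)} \to C \to \{0\}$. Because ${\rm K}_0(J_n) = \{0\}$ and ${\rm K}_1(C) = \{0\}$ (as $C = D({\rm P}) \oplus D({\rm Q}) \oplus D({\rm R})$ is a finite direct sum of full matrix algebras), the sequence collapses to an embedding ${\rm K}_0(\F^{(n)}) \hookrightarrow {\rm K}_0(C) \xrightarrow{\exp} {\rm K}_1(J_n)$, whence ${\rm K}_0(\F^{(n)}) \cong \ker(\exp)$. The explicit form of $\exp$ shows its kernel is the dimension-matching subgroup $\{(\mathbf{m}^1,\mathbf{m}^2,\mathbf{m}^3) : |\mathbf{m}^1|=|\mathbf{m}^2|=|\mathbf{m}^3|\}$, which imposes two independent linear constraints on ${\mathbb Z}^{3n+3}$ and so is isomorphic to ${\mathbb Z}^{3n+1}$. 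The connecting maps of the directed system are exactly the integer matrices $B^{(n,n+1)}$ of \eqref{eq:inclusion}, computed from the matrix representation of the embeddings on the chosen bases $\{a^n_i\},\{b^n_i\},\{c^n_i\}$. Assembling these yields ${\rm K}_0(\F^{(\infty)}) \cong \varinjlim({\mathbb Z}^{3n+1} \xrightarrow{B^{(n,n+1)}} {\mathbb Z}^{3n+4})$.

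For ${\rm K}_1$, the same exact sequence gives, again using ${\rm K}_1(C) = \{0\}$, the identification ${\rm K}_1(\F^{(n)}) \cong {\rm K}_1(J_n)/\mathrm{im}(\exp)$. Here I would use ${\rm K}_1(J_n) \cong {\rm K}_1(\tilde{J}) \cong {\mathbb Z}^2 \oplus {\mathbb Z}^{\infty}$, together with the fact that the image of $\exp$ is precisely the ${\mathbb Z}^2$ summand arising from ${\rm K}_0(\comp^3)/{\rm K}_0({\rm C}(S))$; the quotient is then ${\mathbb Z}^{\infty}$. Under the canonical identification of this ${\mathbb Z}^{\infty}$ with ${\rm K}_1({\rm C}(S))$, built from the winding-number generators $[h_{{\rm P}{\rm Q}}]-[h_{{\rm P}{\rm R}}]$, the embeddings $\F^{(n)} \hookrightarrow \F^{(n+1)}$ induce the identity on ${\rm K}_1$, so the inductive limit is again ${\mathbb Z}^{\infty} \cong {\rm K}_1({\rm C}(S))$.

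The step I expect to be the main obstacle is the ${\rm K}_1$ bookkeeping: one must verify that $\mathrm{im}(\exp)$ sits inside ${\rm K}_1(\tilde{J}) \cong {\mathbb Z}^2 \oplus {\mathbb Z}^{\infty}$ exactly as the free ${\mathbb Z}^2$ complement of the ${\mathbb Z}^{\infty}$ coming from ${\rm K}_1({\rm C}(S))$, so that the quotient is free of infinite rank and introduces no torsion, and that the connecting maps act as the identity on the surviving ${\mathbb Z}^{\infty}$. This relies on the explicit winding-number generators and on the stability of ${\rm K}_1({\rm C}(S))$ under $\otimes\, M_{3^n}(\comp)$. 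By contrast the ${\rm K}_0$ half is comparatively routine once $\ker(\exp)$ and the matrices $B^{(n,n+1)}$ are in hand, and invoking injectivity of the $B^{(n,n+1)}$ (as in the tent map case) to guarantee that the canonical maps ${\rm K}_0(\F^{(n)}) \to {\rm K}_0(\F^{(\infty)})$ are injective is optional for the limit description itself.
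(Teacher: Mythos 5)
Your proposal follows essentially the same route as the paper: continuity of K-theory over the inductive limit of finite cores, the evaluation exact sequence $0 \to J_n \to \F^{(n)} \to D({\rm P})\oplus D({\rm Q})\oplus D({\rm R}) \to 0$ with its 6-term sequence, the computation of ${\rm K}_1(\tilde{J}) \cong {\mathbb Z}^2 \oplus {\mathbb Z}^{\infty}$ from the second exact sequence over ${\rm C}(S)$ and ${\mathbb C}^3$, the identification of $\ker(\exp)$ as the dimension-matching subgroup ${\mathbb Z}^{3n+1}$ with connecting matrices $B^{(n,n+1)}$, and the identification of $\mathrm{im}(\exp)$ with the ${\mathbb Z}^2$ summand via the winding-number generators so that ${\rm K}_1(\F^{(n)}) \cong {\mathbb Z}^{\infty}$ with identity connecting maps. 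This matches the paper's argument in both structure and detail, including the observation that injectivity of the $B^{(n,n+1)}$ is not needed for the limit description itself.
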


\begin{lemma} \label{lem:injective}
Each map $\Psi^{(n,n+1)}$ is injective.  
 \end{lemma}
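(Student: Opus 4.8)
The plan is to read off injectivity directly from the explicit integer matrix $B^{(n,n+1)}$ displayed in \eqref{eq:inclusion}, which represents $\Psi^{(n,n+1)}$ with respect to the bases $\{a^n_i\}$, $\{b^n_i\}$, $\{c^n_i\}$ of ${\rm K}_0(\F^{(n)})$. Since $\Psi^{(n,n+1)}$ is a homomorphism of free abelian groups ${\mathbb Z}^{3n+1}\to{\mathbb Z}^{3n+4}$ given by this integer matrix, it is injective if and only if $B^{(n,n+1)}$ has trivial kernel over ${\mathbb Q}$, i.e. full column rank $3n+1$. So it suffices to exhibit a $(3n+1)\times(3n+1)$ submatrix of $B^{(n,n+1)}$ whose determinant is nonzero.

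The key observation is that $B^{(n,n+1)}$ contains three disjoint identity blocks $E_n$, each sitting over a different group of columns: the $E_n$ in the fifth block-row meets only the $b$-columns, the $E_n$ in the seventh block-row meets only the $c$-columns, and the $E_n$ in the second block-row meets only the columns $a^n_1,\dots,a^n_n$ (its entry in the $a^n_{n+1}$-column being $0^n$). First I would select the $3n$ rows carrying these three $E_n$ blocks together with the single top row of $B^{(n,n+1)}$. Reordering the chosen rows as (fifth block, seventh block, second block, top row) and the columns as ($b$-columns, $c$-columns, $a^n_1,\dots,a^n_n$, then $a^n_{n+1}$), the resulting square submatrix is block lower-triangular,
\[
\begin{pmatrix}
E_n & O_n & O_n & 0^n \\
O_n & E_n & O_n & 0^n \\
F_n & F_n & E_n & 0^n \\
(-2)_n & (-2)_n & (-1)_n & -1
\end{pmatrix},
\]
with diagonal blocks $E_n,E_n,E_n$ and the scalar $-1$. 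Its determinant is therefore $-1\neq 0$, so $B^{(n,n+1)}$ has full column rank and $\Psi^{(n,n+1)}$ is injective. Equivalently, one may argue by peeling off coefficients: a relation among the columns restricted to the fifth block-row forces all $b$-coefficients to vanish, then the seventh block-row forces the $c$-coefficients to vanish, the second block-row forces $a^n_1,\dots,a^n_n$ to vanish, and finally the nonzero $a^n_{n+1}$-column (top entry $-1$) forces the last coefficient to vanish.

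I do not expect a genuine obstacle here: the statement is pure linear algebra over ${\mathbb Z}$ and is forced by the identity blocks visible in \eqref{eq:inclusion}. The only point that needs care is the bookkeeping of the block-row and block-column indices, so that the disjointness of the supports of the three $E_n$ blocks is invoked in the correct order; once that ordering is fixed, the triangular structure—and hence the nonvanishing determinant—is immediate.
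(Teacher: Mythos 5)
Your proof is correct and takes essentially the same approach as the paper: both reduce injectivity of $\Psi^{(n,n+1)}$ to showing that the explicit integer matrix $B^{(n,n+1)}$ of \eqref{eq:inclusion} has full column rank $3n+1$. The only difference is in execution --- the paper performs row reduction on the $n=3$ instance and asserts that the general case is similar, whereas you exhibit a nonsingular $(3n+1)\times(3n+1)$ minor (block lower-triangular with diagonal blocks $E_n,E_n,E_n,-1$) uniformly in $n$, which if anything is the more complete version of the same argument.
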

\begin{proof}
By the expression \eqref{eq:inclusion}, we can show that 
the rank of the matrix $B^{(n,n+1)}$ is $3n+1$.  We show it for the case
 $n=3$.  By row basic deformation, the matrix $B^{(3,4)}$ is
 transformd to the following: 
\[
 \begin{pmatrix} 
            0 & 0  &  0 &  0 & 0 & 0 & 0 & 0 &  0 & 0 \\
            1 & 0  &  0 &  0 & 0 & 0 & 0 & 0 & 0 & 0  \\
            0 & 1  &  0 &  0 & 0 & 0 & 0& 0 & 0 & 0\\
            0 & 0  &  1	&  0 & 0 & 0 &  0& 0 & 0 & 0\\
            0 & 0  &  0	&  0 & 0 & 0 &  0& 0 & 0 & 0\\
            0 & 0  &  0	&  1 & 0 & 0 &  0& 0 & 0 & 0\\
            0 & 0  &  0 &  0 & 1 & 0 & 0 & 0 & 0 & 0\\
            0 & 0  &  0 &  0 & 0 & 1 & 0 & 0 & 0 & 0\\
            0 & 0  &  0	&  0 & 0 & 0 & 1 & 0 & 0 & 0\\
            0 & 0  &  0	&  0 & 0 & 0 &  0& 0 & 0 & 0\\
            0 & 0  &  0 &  0 & 0 & 0 & 0 & 1 & 0 & 0 \\
            0 & 0  &  0 &  0 & 0 & 0 & 0 & 0 & 1 & 0 \\
            0 & 0  &  0	&  0 & 0 & 0 & 0 & 0 & 0 & 1 \\
	       \end{pmatrix}.  
\]
The general $n$ case is similar.  
Then $B^{(n,n+1)}$ is injective as linear map from ${\mathbb R}^{3n+1}$ to
 ${\mathbb R}^{3n+4}$.  
\end{proof}


\section
{Dimension groups}
The dimension groups for topological Markov shifts were 
introduced and studied by 
Krieger in \cite{Kr1} and \cite{Kr2} motivated by the K-groups 
for the AF-subalgebras of the associated Cuntz-Krieger algebras \cite{CK}.
The dimension group is a complete invariant for topological Markov shifts 
up to shift equivalence. K. Matsumoto \cite{Ma2} 
studied the dimension groups for the \cst-algebras 
$\O_{\Lambda}$ associated with subshifts $\Lambda$. He showed that 
the dual action $\hat{\alpha}$ of the gauge action $\alpha$ 
on the \cst-algebra  $\O_{\Lambda}$ 
induces an automorphism $\beta$ on the  K-group 
$K_0(F_{\Lambda})$ of the fixed point 
algebra $F_{\Lambda}$ through  Morita 
equivalence between the fixed point algebra
 and the crossed product algebra by the gauge action. A 
system $(K_0(F_{\Lambda}), K_0(F_{\Lambda})_+, \beta)$
 is called the dimension group and 
an important invariant for symbolic dynamical systems. 

In the case of self-similar maps without branched points, 
the same method works. But 
in the case of self-similar maps with branched points, 
it does not work, since the fixed point algebra
 and the crossed product algebra by the gauge action 
are not Morita equivalent in general. 
However we shall show that it is possible to
introduce a certain canonical {\it endomorphism} 
(similar to the automorphism above) 
on the K-group of the core using isometries in the one-spectral 
subspace directly.  

Let $G$ be a compact abelian group. Consider a \cst-dynamical system 
$(A, G, \alpha)$, that is, $A$ is a \cst -algebra and 
$\alpha$ is a homomorphism from $G$ to the automorphism group 
${\rm Aut}(A)$  of $A$. We assume that $A$ is unital. 
For $a$, $b \in A$ and $g \in G$, 
put $f_{a,b}(g)= a \alpha_g(b)$. Then $f_{a,b} \in {\rm C}(G,A)$ is an element 
in the crossed product $A \rtimes_{\alpha} G$.  We sometimes 
write it as $\int_{G} f_{a,b}(z)\lambda _z dz$. 
Define a constant function $e \in {\rm C}(G,A)$ by  $e(g) = 1$ for $g \in G$. 
The $e$ is a projection and we sometimes write it as  
\[
e =\int_{\mathbb G} 1 \lambda _z dz \in A \rtimes_{\alpha}{G} \ \ , 
\]
We can also identify $f_{a,b}$ with "rank one operator" $aeb$ on the 
Hilbert $A^{\alpha}$-module $A$. 

There exists an isomorphism of the fixed point algebra 
$A^{\alpha}$ onto the hereditary subalgebra $e(A \rtimes_{\alpha}{G})e$
such that $a \mapsto f_{a,1} = ae = ea$ for $a \in A^{\alpha}$ \cite{Ros}. 

\begin{defn} (Rieffel, see Phillips \cite{Ph}) 
Let $G$ be a compact abelian group and $(A, G, \alpha)$ a \cst-dynamical
 system.  Then the action 
$\alpha$ is called saturated if the linear span of 
$\{f_{a,b} \ | \ a,b \in A\}$ is dense in $A \rtimes_{\alpha}{G}$. 
The closed linear span of $\{f_{a,b} \ | \ a,b \in A\}$ is the 
ideal generated by $e$. Therefore 
 the action 
$\alpha$ is saturated if $e$ is a full projection in 
$A \rtimes_{\alpha}{G}$. 
\end{defn}

We can associate  left $A^{G}$- right $A \rtimes_{\alpha}G$-module
$Y$ as follows: We put $Y=A$ as a linear space.  For $a \in A^{G}$, $f\in {\rm
C}(G,A)$ and $x \in A$, we define module actions by 
\[
  a\cdot x = ax, \qquad x \cdot f = \int_G \alpha_{g^{-1}}(xf(g))\,{\rm
  d}g.   
\]
For $x$, $y \in A$, we define two inner products by 
\[
 {}_{A^{\alpha}} (x|y) = \int_G\alpha_g(xy^*)\,{\rm d}g, \quad 
 (x|y)_{A \rtimes_{\alpha}G} = ``g \mapsto x^*\alpha_g(y) ``.  
\]
Then the action 
$\alpha$ is saturated if and only if $A^G$-$A\rtimes_{\alpha}G$ bimodule $Y$
is a Morita equivalence module.  

For example, the gauge action on a Cuntz-Krieger algebra is saturated. 
Matsumoto \cite{Ma2} showed that the gauge action on the \cst-algebra 
$\O_{\Lambda}$ associated with a subshifts $\Lambda$ is saturated. 
J. Jeong \cite{Je} showed that the gauge action on the graph \cst-algebra 
associated with a locally finite directed graph with no sources or sinks 
is saturated. 

The following theorem is a generalization to the case of 
Cuntz-Pimsner algebras.

\begin{prop} \label{prop:saturated}
 Let $A$ be a unital \cst-algebra. 
Let $X$ be an $A$-$A$ correspondence.  We assume that $X$ is full,
the left module action of $A$ on $X$ is injective and there exits
$x_0 \in X$ such that $(x_0|x_0)_A = I$. 
If $X$ has a finite  basis  as a Hilbert right $A$-module, then the gauge
action $\alpha$ of ${\mathbb T}$ on $\O_{X}$ is saturated.  
\end{prop}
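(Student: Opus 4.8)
The plan is to prove saturation by showing directly that the closed linear span of $\{f_{a,b}\mid a,b\in\O_X\}$ is dense in $\O_X\rtimes_\alpha\mathbb{T}$, which as explained above is equivalent to $e$ being a full projection. The crossed product is a completion of $C(\mathbb{T},\O_X)$, and the elements $\iota_q(c):=\int_{\mathbb{T}}e^{iqt}c\,\lambda_t\,dt$, for $c\in\O_X$ and $q\in\mathbb{Z}$, have dense linear span there (finite Fourier sums with coefficients in $\O_X$ are dense in $C(\mathbb{T},\O_X)$). So it suffices to realize every $\iota_q(c)$ inside the span of the $f_{a,b}$. The starting observation is that if $b$ is homogeneous of gauge-degree $q$, so that $\alpha_t(b)=e^{iqt}b$, then $f_{a,b}=\int_{\mathbb{T}}a\,\alpha_t(b)\,\lambda_t\,dt=\iota_q(ab)$. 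Hence the task reduces to writing an arbitrary $c\in\O_X$ as a finite sum of products $ab$ in which $b$ is homogeneous of a prescribed degree $q$.

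First I would record the two structural identities supplied by the hypotheses. Let $\{u_1,\dots,u_k\}$ be a finite right-module basis of $X$. The reconstruction formula gives $\mathrm{id}_X=\sum_{j}\theta^X_{u_j,u_j}$ in $\K(X)=\L(X)$. Since $X$ is finitely generated we have $\K(X)=\L(X)$, so the ideal $\phi^{-1}(\K(X))$ is all of $A$; because $\phi$ is injective, the Cuntz--Pimsner covariance relation applies to $1\in A$, where $\phi(1)=\mathrm{id}_X$, and yields $1=\sum_{j}S_{u_j}S_{u_j}^*$ in $\O_X$. Writing $S_\mu=S_{u_{\mu_1}}\cdots S_{u_{\mu_m}}$ for a multi-index $\mu$ of length $m$, an immediate induction gives $\sum_{|\mu|=m}S_\mu S_\mu^*=1$ for every $m\ge1$. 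Secondly, the unit vector $x_0$ with $(x_0|x_0)_A=I$ makes $S_{x_0}$ an isometry, $S_{x_0}^*S_{x_0}=1$, so $S_{x_0}^m$ is an isometry of degree $m$ with $(S_{x_0}^m)^*S_{x_0}^m=1$ for all $m\ge1$.

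With these identities I would treat the three frequency ranges, in each case exhibiting $\iota_q(c)$ as a finite sum of $f_{a,b}$ with $b$ homogeneous of degree $q$. For $q=0$, simply $\iota_0(c)=f_{c,1}$. For $q=m>0$, the isometry gives $c=\bigl(c\,(S_{x_0}^*)^m\bigr)S_{x_0}^m$, whence $\iota_m(c)=f_{c(S_{x_0}^*)^m,\,S_{x_0}^m}$. For $q=-m<0$, the basis relation gives $c=c\sum_{|\mu|=m}S_\mu S_\mu^*=\sum_{|\mu|=m}(cS_\mu)S_\mu^*$, whence $\iota_{-m}(c)=\sum_{|\mu|=m}f_{cS_\mu,\,S_\mu^*}$. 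In each case $S_{x_0}^m$, respectively $S_\mu^*$, is homogeneous of the required degree, so $f_{a,b}=\iota_{\deg b}(ab)$ applies and confirms the asserted equalities. Thus every $\iota_q(c)$ lies in the span of the $f_{a,b}$, the span is dense, and $\alpha$ is saturated.

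The step I expect to be the crux is the passage from ``$X$ has a finite basis and $\phi$ is injective'' to the honest identity $1=\sum_j S_{u_j}S_{u_j}^*$ in $\O_X$: one must verify that finite generation forces $\K(X)=\L(X)$ so that $\phi^{-1}(\K(X))=A$, and that injectivity of $\phi$ is exactly what lets the covariance relation be invoked at $1\in A$. Everything else is formal. It is worth emphasizing the complementary roles of the two hypotheses: the isometry $x_0$ is what produces the positive frequencies, while the relation $\sum_{|\mu|=m}S_\mu S_\mu^*=1$ produces the negative ones; the latter cannot yield positive frequencies, since $\sum_{|\mu|=m}S_\mu^*S_\mu=\sum_{|\mu|=m}(u_\mu|u_\mu)_A$ need not equal $1$. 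Finally, fullness of $X$ is automatic in this setting, as $(x_0|x_0)_A=I$ already forces $\overline{(X|X)_A}=A$.
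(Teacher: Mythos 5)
Your proposal is correct and follows essentially the same route as the paper: reduce to homogeneous Fourier modes, use the isometry $S_{x_0}$ (its powers) to produce positive frequencies, and use the finite-basis relation $\sum_{|\mu|=m}S_\mu S_\mu^*=1$ to produce negative ones. The only differences are cosmetic — you work with arbitrary $c\in\O_X$ and the modes $\iota_q(c)$ rather than with the monomials $z\mapsto z^nS_xS_y^*$, and you spell out the derivation of $1=\sum_j S_{u_j}S_{u_j}^*$ from the covariance relation, which the paper uses implicitly.
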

\begin{proof}
 Let $B=\O_X$ and $G = {\mathbb T}$. 
We shall consider a hat map of 
$B=\O_X$ into  $\O_{X} \rtimes_{\alpha}{\mathbb T}$:
For $b \in B$,  a constant function 
$\hat{b} = (z \in  {\mathbb T} \mapsto b)$ 
gives the desired element in $\O_{X} \rtimes_{\alpha}{\mathbb T}$.  
For $a$, $b \in B$, put $f_{a,b}(z)= a \alpha_z(b)$. Put $e = \hat{1}$.
Then $e$ is a projection.  
We identify 
\[
e =\int_{\mathbb T} 1 \lambda _z dz \in \O_{X} \rtimes_{\alpha}{\mathbb 
 T}, \quad \hat{b} = be \text{ and } 
f_{a,b} = \int_{\mathbb T} f_{a,b}(z)\lambda _z dz
\]
as elements in 
the crossed product $\O_{X} \rtimes_{\alpha}{\mathbb T}$.
We can also identify $f_{a,b}$ with "rank one operator" $aeb$ on the 
Hilbert $\O_X^{\alpha}$-module $\O_{X}$.  

Since 
$\O_{X} \rtimes_{\alpha}{\mathbb T}$ is the closure of the span 
by 
\[
\{\, z \mapsto f(z)T \,| \, f \in C({\mathbb T}), T \in \O_{X} \,\},   
\]
it is enough to show that for each $n \in {\mathbb Z}$, $x \in
 X^{\otimes m}$, $y \in X^{\otimes k}$ and $c \in A$, 
\[
 ``z \in {\mathbb T} \mapsto z^n S_xS_y^* `` \in {\rm Span }\{\,f_{a,b}\,|\, a,b
 \in \O_X \,\},   
\]
and 
\[
 ``z \in {\mathbb T} \mapsto z^n c  `` \in {\rm Span }\{\,f_{a,b}\,|\, a,b
 \in \O_X\,\}.  
\]

Firstly consider the case that  $n > (|x|-|y|)$.  Put $l = n-(|x|-|y|) > 0$.  
Put $v = x_0 \otimes \cdots \otimes x_0 \in X^{\otimes l}$.  Then 
$S_v^*S_v=1$, and it holds 
\begin{align*}
  z^nS_xS_y^* =  & S_v^*z^n S_vS_xS_y^*  \\
              = & S_v^* \alpha_z(S_vS_xS_y^*) \\
              = & f_{a,b}(z), 
\end{align*}
where $a=S_{v}^*\in B$, $b = S_vS_xS_y^* \in B$.  Secondly we 
consider the case that $ n-(|x|-|y|)=0$.  
Then it holds that
\[
    z^n S_x S_y^*  = 1 \alpha_z(S_xS_y^*) = f_{a,b}(z), 
\]
where $a=1 \in B$ and $b=S_xS_y^* \in B$. Thirdly we 
consider the case that  $n-(|x|-|y|)<0$, and put $l=(|x|-|y|)-n > 0$.  
Let $\{\,u_1,\dots,u_p\,\} \in X$  be a finite
basis. Then 
$$
\{\,u_{i_1}
\otimes \cdots \otimes u_{i_n}\,|\,(i_1,\dots,i_n) \in \{1,\dots,p\}^n\,\}
$$ is
a finite bases of $X^{\otimes n}$.  Hence  we have that 
\begin{align*}
    z^n S_x S_y^* = & z^n  \sum_{(i_1,\dots,i_n) \in \{1,\dots,p\}^n}
    S_{u_{i_1}}\cdots S_{u_{i_n}} 
      S_{u_{i_n}}^*\cdots S_{u_{i_1}}^* S_xS_y^* \\
      = & \sum_{(i_1,\dots,i_n) \in \{1,\dots,p\}^n}
    S_{u_{i_1}}\cdots S_{u_{i_n}}S_{u_{i_n}}^*\cdots S_{i_{i_{l-1}}}^*
      \alpha_z(S_{u_{i_l}}^*\cdots S_{u_{i_1}}^* S_xS_y^*) \\
      = & \sum_{(i_1,\dots,i_n) \in
 \{1,\dots,p\}^n}f_{a_{i_1,\dots,i_n},b_{i_1,\dots,i_n}}(z), 
\end{align*}
where $a_{i_1,\dots,i_n}=S_{u_{i_1}}\cdots S_{u_{i_n}}S_{u_{i_n}}^*\cdots
 S_{u_{i_{l+1}}}^*$ and 
$b_{i_1,\dots,i_n}=S_{u_{i_{l}}}^*\cdots S_{u_{i_1}}^* S_xS_y^*$.  
\end{proof}

The proof in Proposition \ref{prop:saturated} does not work if $X$ does 
not have a finite basis. 

\begin{thm}  Let $A$ be a unital $C^*$-algebra. 
Let $X$ be an $A$-$A$ correspondence.  We assume that $X$ is full,
 left module action of $A$ on $X$ is injective and there exits $x_0 \in
 X$ such that $(x_0|x_0)_A = I$. 
Suppose  that $X$ has a finite  basis  as a Hilbert right $A$-module. 
For the gauge action $(\O_X,{\mathbb T},\alpha)$ and its dual action 
$(\O_X \rtimes {\mathbb T}, \hat{\mathbb T},\hat{\alpha})$, consider the 
following diagram:
\[
\begin{CD}
   {\rm K}_0(\O_X \rtimes_{\alpha}{\mathbb T}) @>{\hat{\alpha}_*}>>
{\rm K}_0(\O_X \rtimes_{\alpha}{\mathbb T}) \\
   @A{\varphi_*}AA
     @VV{\varphi_*^{-1}}V \\
   {\rm K}_0(\O_X^{\alpha}) @>{\beta}>> {\rm K}_0(\O_X^{\alpha})
\end{CD}
\]
Where $\varphi (b) = \hat{b}= be$ for $b \in \O_X^{\alpha}$. Then 
$e$ is a full projection in $\O_X \rtimes_{\alpha}{\mathbb T}$ 
and $\varphi_*$ gives an isomorphism of ${\rm K}_0(\O_X^{\alpha})$ onto 
${\rm K}_0(\O_X \rtimes_{\alpha}{\mathbb T})$. 
Moreover if we put $\beta := {\varphi_*^{-1}}{\hat{\alpha}_*}{\varphi_*}$, 
then the above diagram is commutative. For 
 each projection $P \in \O_X^{\alpha}$, any partial isometry $S$ in the 
one-spectral subspace $\O_X^{(1)}$ of $\O_X$ (i.e., 
$\alpha_t(S) = e^{it}S$ ) with $P \le S^*S$,  
we have $\beta([P]) = [SPS^*]$  in the $K_0$-group 
${\rm K}_0(\O_X^{\alpha})$ of the core. 
. 
\end{thm}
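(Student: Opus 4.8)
The plan is to reduce the whole statement to the construction of one explicit partial isometry in the crossed product $B:=\O_X\rtimes_{\alpha}\mathbb{T}$. First I would dispose of the structural assertions. By Proposition~\ref{prop:saturated} the gauge action is saturated, and by the characterization of saturation recalled just before it, the closed linear span of $\{f_{a,b}\}$ equals the closed ideal generated by $e$; saturation thus says precisely that $e$ is full in $B$. A full corner has the same $K$-theory as the ambient algebra, so the inclusion $eBe\hookrightarrow B$ induces an isomorphism on $K_0$; composing with the Rosenberg isomorphism $\varphi:\O_X^{\alpha}\xrightarrow{\cong}eBe$, $b\mapsto be$ \cite{Ros}, shows $\varphi_*$ is an isomorphism. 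Commutativity of the diagram is then just the definition $\beta:=\varphi_*^{-1}\hat{\alpha}_*\varphi_*$.

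To prove $\beta([P])=[SPS^*]$ I would fix coordinates. Let $\iota:\O_X\to M(B)$ be the canonical embedding and set $e_n:=\int_{\mathbb{T}}z^n\lambda_z\,dz$, the mutually orthogonal spectral projections with $e=e_0$, $\hat{\alpha}_n(e_0)=e_n$, and $\hat{\alpha}$ fixing $\iota(\O_X)$. From $\lambda_z\iota(a)=z^k\iota(a)\lambda_z$ for $a$ in the $k$-spectral subspace $\O_X^{(k)}$ one reads off the commutation rule $\iota(a)e_m=e_{m-k}\iota(a)$; in particular $\iota(a)$ commutes with all $e_n$ when $a\in\O_X^{\alpha}=\O_X^{(0)}$, and $\iota(S)e_1=e_0\,\iota(S)$ for $S\in\O_X^{(1)}$.

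With these rules the computation is immediate. Since $\hat{\alpha}_1$ fixes $\iota(P)$ and carries $e_0$ to $e_1$, one has $\hat{\alpha}_1(\varphi(P))=\iota(P)e_1$. I would then take the single element $V:=\iota(SP)e_1=e_0\,\iota(SP)\in B$ and verify, using $P\le S^*S$ (hence $S^*SP=P$ and $PS^*SP=P$), that $V^*V=e_1\iota(PS^*SP)e_1=\iota(P)e_1=\hat{\alpha}_1(\varphi(P))$ while $VV^*=e_0\iota(SPS^*)e_0=\varphi(SPS^*)$. Thus $V$ is a partial isometry in $B$ giving a Murray--von Neumann equivalence $\hat{\alpha}_1(\varphi(P))\sim\varphi(SPS^*)$, whence $\hat{\alpha}_*\varphi_*[P]=\varphi_*[SPS^*]$ and $\beta([P])=[SPS^*]$. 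This also shows at once that $[SPS^*]$ does not depend on the chosen $S$, and the hypothesis is nonvacuous since $S=S_{x_0}$ with $S_{x_0}^*S_{x_0}=I$ satisfies $P\le S^*S$ for every projection $P$.

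The hard part is not depth but getting the bookkeeping and conventions right. The decisive point is orienting the dual action so that its generator raises spectral degree, i.e.\ $\hat{\alpha}_1(e_0)=e_1$ rather than $e_{-1}$: this is exactly what forces $S$ into the one-spectral subspace $\O_X^{(1)}$ as in the statement, the opposite convention producing instead $[S^*PS]$ with $S\in\O_X^{(-1)}$, the inverse shift. The only remaining care is to check that $V$ lies in $B$ and not merely in $M(B)$, which holds because $e_1=\hat{\alpha}_1(e)\in B$ and $\iota(SP)\in M(B)$, so $\iota(SP)e_1\in B$.
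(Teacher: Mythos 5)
Your proposal is correct and follows essentially the same route as the paper: saturation (Proposition \ref{prop:saturated}) gives fullness of $e$, hence the corner isomorphism on ${\rm K}_0$, and the key step is an explicit Murray--von Neumann equivalence between $\hat{\alpha}(Pe)$ and $\varphi(SPS^*)$ inside the crossed product. Your partial isometry $V=e_0\,\iota(SP)$ is exactly $\hat{\alpha}(SPe)$, where $SPe$ is the partial isometry the paper verifies first (computing $(SPe)^*(SPe)=Pe$ and $(SPe)(SPe)^*=SPeS^*$) before applying $\hat{\alpha}$ via the identity $\hat{\alpha}(Se)=eS$; so the two computations coincide up to the order in which the dual automorphism is applied.
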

\begin{proof}
By Proposition \ref{prop:saturated}, 
$e$ is a full projection in $\O_X \rtimes_{\alpha}{\mathbb T}$, 
and $e$ commutes with the $O_X^{\alpha}$  and 
$O_X^{\alpha}e = e(\O_X \rtimes_{\alpha}{\mathbb T})e $. 
Therefore  $\varphi_*$ gives an isomorphism of ${\rm K}_0(\O_X^{\alpha})$ onto 
${\rm K}_0(\O_X \rtimes_{\alpha}{\mathbb T})$.
Since we put $\beta = \varphi_*^{-1}{\hat{\alpha}_*} \varphi_*$, 
$\beta$ gives an automorphism on the K-group ${\rm K}_0(\O_X^{\alpha})$.

Let $P \in \O_X^{\alpha}$ be a projection,  
$S$ a partial isometry in the 
one-spectral subspace of $\O_X$ such that $P \le S^*S$. 
Then $SPS^*$ is also a projection in $\O_X^{\alpha}$. Since 
$eP = Pe$ and $eSPS^* = SPS^*e$, $Pe$ is a projection and 
$SPe$ is a partial isometry.  In fact 
\[
(SPe)^*(SPe) = ePS^*SPe = eP, \ \ (SPe)(SPe)^* = SPePS^* = SPeS^*.    
\]
Therefore two projections $Pe$ and $SPeS^*$ are von Neumann 
equivalent in $\O_X \rtimes_{\alpha}{\mathbb T}$. 
It is enough to show that 
\[
\hat{\alpha}_*{\varphi_*}([P])=[ \hat{\alpha} (Pe)]= [SPS^*e].  
\]
We see that  $\hat{\alpha}(Se) = eS$. In fact, 
\[
eS = (\int_{\mathbb T} 1 \lambda _z dz)S 
= \int_{\mathbb T} \alpha_z(S) \lambda _z dz 
= \int_{\mathbb T} zS \lambda _z dz = \hat{\alpha}(Se).  
\]
Hence we have $\hat{\alpha}(eS^*)= S^*e$. Therefore
\[
 \hat{\alpha} (Pe)] = [ \hat{\alpha} (SPeS^*)]
= [\hat{\alpha} (SP) \hat{\alpha}(eS^*)] = [SPS^*e]
\]
\end{proof}

We note that $[SPS^*]$ does not depend on the choice of such $S$.  

The following is due to Rieffel as in Phillips \cite{Ph} 7.1.15.  

\begin{thm} \rm (Rieffel) 
Let  $(A, G, \alpha)$ be a \cst-dynamical system with a compact abelian
group $G$. For $\tau \in \hat{G}$, define the spectral subspace  $A_\tau$ by 
\[
  A_{\tau} = \{\,a \in A\,|\,\alpha_g(a)=\tau(g)a\,\}.  
\]
Then $\alpha$ is saturated if and only if
 $\overline{A_{\tau}^*A_{\tau}}=A^{\alpha}$ for each $\tau \in
 \hat{G}$.  
\end{thm}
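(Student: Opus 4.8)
The plan is to realize everything inside the multiplier algebra $M(A\rtimes_\alpha G)$ using the spectral projections of the canonical copy of $C^*(G)$, and to translate ``$\alpha$ is saturated'' into an equality of hereditary corners. Write $\mathbf 1$ for the trivial character, so that $A^\alpha=A_{\mathbf 1}$ and $e=p_{\mathbf 1}$, where for $\tau\in\hat G$ I set $p_\tau=\int_G\overline{\tau(z)}\,\lambda_z\,dz\in M(A\rtimes_\alpha G)$. These are mutually orthogonal projections, central in $C^*(G)$, and $\sum_\tau p_\tau$ is a strict approximate unit. Two elementary facts are recorded first: $A_\tau^*=A_{\bar\tau}$ and $A_\sigma A_\tau\subseteq A_{\sigma\tau}$, so $A_\tau^*A_\tau$ and $A_\tau A_\tau^*$ lie in $A^\alpha$ and are closed two-sided ideals of $A^\alpha$; replacing $\tau$ by $\bar\tau$ shows the conditions $\overline{A_\tau^*A_\tau}=A^\alpha$ (all $\tau$) and $\overline{A_\tau A_\tau^*}=A^\alpha$ (all $\tau$) are equivalent, so it suffices to work with the latter.

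The computational heart is the covariance relation $\lambda_z a\lambda_z^*=\alpha_z(a)$. For $a\in A_\sigma$ this gives $\lambda_z a=\sigma(z)a\lambda_z$, and integrating against $\overline{\tau(z)}$ yields the commutation rules
\[
 a\,p_\tau = p_{\tau\sigma}\,a,\qquad p_\tau\,a = a\,p_{\tau\sigma^{-1}}\qquad(a\in A_\sigma).
\]
From Stone--Weierstrass (trigonometric polynomials are dense in $C(G)$) together with the density of $C(G,A)$ in the crossed product, I get $A\rtimes_\alpha G=\overline{\mathrm{span}}\{a\,p_\sigma:a\in A,\ \sigma\in\hat G\}$, and Fej\'er/Ces\`aro averaging lets me replace $a$ by finite sums of its spectral components $a_\sigma\in A_\sigma$. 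Using the commutation rules I then compute the two corners I need: for every $\tau$,
\[
 p_\tau(A\rtimes_\alpha G)p_\tau=\overline{A^\alpha p_\tau}\cong A^\alpha,\qquad p_\tau\,I\,p_\tau=\overline{A_\tau A_\tau^*}\,p_\tau,
\]
where $I=\overline{\mathrm{span}}\{f_{a,b}:a,b\in A\}$ is the ideal generated by $e$. The first isomorphism is $a\mapsto a p_\tau$, an injective $*$-homomorphism (seen in the regular representation, where $p_\tau$ projects onto the $\tau$-isotypic subspace and $A^\alpha$ acts faithfully).

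Both implications then follow from the organizing principle that a closed ideal $I$ equals $A\rtimes_\alpha G$ if and only if $p_\tau I p_\tau=p_\tau(A\rtimes_\alpha G)p_\tau$ for every $\tau$. If $\alpha$ is saturated then $I=A\rtimes_\alpha G$, so the two displays above force $\overline{A_\tau A_\tau^*}\,p_\tau=\overline{A^\alpha p_\tau}$; pulling back along the injective map $a\mapsto a p_\tau$ gives $\overline{A_\tau A_\tau^*}=A^\alpha$, hence $\overline{A_\tau^*A_\tau}=A^\alpha$. Conversely, if $\overline{A_\tau A_\tau^*}=A^\alpha$ for all $\tau$, then $p_\tau I p_\tau=A^\alpha p_\tau=p_\tau(A\rtimes_\alpha G)p_\tau$, so $I$ contains every diagonal corner. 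Since $a_\sigma p_\tau=p_{\tau\sigma}\,a_\sigma p_\tau$ lies in the off-diagonal block $p_{\tau\sigma}(A\rtimes_\alpha G)p_\tau$, I recover these blocks from the diagonal ones by multiplying on the left by an approximate unit of the corner $p_{\tau\sigma}(A\rtimes_\alpha G)p_{\tau\sigma}\subseteq I$ and invoking the ideal property; as the $a_\sigma p_\tau$ span a dense subset, $I=A\rtimes_\alpha G$, i.e.\ $\alpha$ is saturated.

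The routine parts are the covariance computation and the density/averaging statements. The main obstacle is the bookkeeping for non-unital $A$: the $p_\tau$ live only in $M(A\rtimes_\alpha G)$, so every identity involving them must be read only after multiplying back into $A\rtimes_\alpha G$, and the passage from ``$I$ contains all diagonal corners'' to ``$I$ is everything'' must be carried out with approximate units of hereditary subalgebras rather than with a unit. I expect this corner-to-whole-algebra step in the converse to require the most care; the forward direction is essentially immediate once the two corner identities are in place.
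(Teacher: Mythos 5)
The paper does not actually prove this theorem: it is quoted as a known result of Rieffel, with a pointer to Phillips's book (7.1.15), so there is no in-paper argument to compare yours against. Judged on its own, your proof is correct and is essentially the standard one. The key identities check out: for $a\in A_\sigma$ the covariance relation gives $a\,p_\tau=p_{\tau\sigma}a$ and $p_\tau a=a\,p_{\tau\sigma^{-1}}$, from which one computes $p_\tau(A\rtimes_\alpha G)p_\tau=A^\alpha p_\tau$ and $p_\tau I p_\tau=\overline{A_\tau A_\tau^*}\,p_\tau$ (indeed $p_\tau\,a e b\,p_\tau=a_\tau b_{\bar\tau}p_\tau$ with $a_\tau\in A_\tau$, $b_{\bar\tau}\in A_\tau^*$), and the observation that $A_\tau A_\tau^*=A_{\bar\tau}^*A_{\bar\tau}$ lets you pass between the two versions of the condition. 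The forward direction then follows from injectivity (hence isometry) of $a\mapsto ap_\tau$ on $A^\alpha$, and the converse from hereditarity of the closed ideal $I$ together with the fact that the elements $a_\sigma p_\tau\in p_{\tau\sigma}(A\rtimes_\alpha G)p_\tau$ span a dense subspace. Two steps you gesture at rather than prove — the norm-density of the span of the spectral subspaces in $A$ (your ``Fej\'er/Ces\`aro'' step, which for a general compact abelian $G$ needs an $L^1(G)$-approximate-identity argument rather than literal Fej\'er kernels) and the passage from full diagonal corners to $I=A\rtimes_\alpha G$ — are both standard and correctly indicated; for the latter, the cleaner route is simply that $x\in p_{\tau\sigma}(A\rtimes_\alpha G)p_\tau$ has $xx^*\in p_{\tau\sigma}Ip_{\tau\sigma}\subseteq I$, whence $x\in I$ since closed ideals are hereditary. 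Note also that in the paper's setting $A$ is assumed unital, so your non-unital bookkeeping with $M(A\rtimes_\alpha G)$, while harmless and in fact more general, is not needed there.
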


\begin{prop} \rm Let $A$ be a unital \cst-algebra. 
Let $X$ be an $A$-$A$ correspondence.  We assume that $X$ is full,
 left module action of $A$ on $X$ is injective and there exits $x_0 \in
 X$ such that $(x_0|x_0)_A = I$. 
Suppose that $X$ does not have a finite  basis  as a Hilbert right
$A$-module.  Then the gauge action $({\mathbb{T}},\O_X,\alpha)$
 is not saturated.  
\end{prop}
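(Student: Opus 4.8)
The plan is to apply Rieffel's theorem (the theorem stated just above) in contrapositive form: it suffices to exhibit a single character $\tau\in\hat{\mathbb T}=\mathbb Z$ for which $\overline{(\O_X)_\tau^*(\O_X)_\tau}\neq\O_X^\alpha=\F^{(\infty)}$. The right candidate is $\tau=-1$. Since the spectral subspace of degree $-1$ is the adjoint of the one-spectral subspace, $(\O_X)_{-1}=((\O_X)_{1})^*$, we have $\overline{(\O_X)_{-1}^*(\O_X)_{-1}}=\overline{(\O_X)_{1}(\O_X)_{1}^*}$. The asymmetry that drives the argument is this: for $\xi,\eta\in X$ one has $S_\xi^*S_\eta=(\xi\,|\,\eta)_A\in A$, so by fullness the degree $+1$ computation already recovers all of $A$, in particular the unit; whereas $S_\xi S_\eta^*=\theta_{\xi,\eta}\in\K(X)$ can reach the unit only if $\mathrm{id}_X$ is itself compact. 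Thus the obstruction is located at $\tau=-1$.

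First I would show $\overline{(\O_X)_{1}(\O_X)_{1}^*}\subseteq \mathcal I$, where $\mathcal I:=\overline{\sum_{j\ge 1}\K(X^{\otimes j})}$ is taken inside the core. The subspace $(\O_X)_{1}$ is the closed span of the $S_xS_y^*$ with $|x|=|y|+1\ge 1$, and by density I may take $x$ to be a simple tensor $x=x_1\otimes\hat x$ with $x_1\in X$. A generator of $(\O_X)_{1}(\O_X)_{1}^*$ then has the form $S_{x_1}MS_{x_1'}^*$ with $x_1,x_1'\in X$ and $M$ of degree $0$, i.e.\ $M\in\F^{(\infty)}$. Approximating $M$ by elements of $\bigcup_k\F^{(k)}$ and using $S_{x_1}\,a\,S_{x_1'}^*=\theta_{x_1a,\,x_1'}\in\K(X)$ for $a\in A$ and $S_{x_1}(S_uS_v^*)S_{x_1'}^*=\theta_{x_1\otimes u,\,x_1'\otimes v}\in\K(X^{\otimes(j+1)})$ for $u,v\in X^{\otimes j}$, the continuity of left and right multiplication by $S_{x_1},S_{x_1'}^*$ together with the closedness of $\mathcal I$ yields the claimed inclusion.

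The heart of the matter is to prove that $\mathcal I$ is a \emph{proper} ideal of the core, and this is where the hypotheses enter. One checks that each $\mathcal G^{(n)}:=\sum_{j=1}^{n}\K(X^{\otimes j})\otimes I_{n-j}$ is a closed two-sided ideal of $\F^{(n)}$ (because $\K$ is an ideal in $\L$), that the connecting embeddings carry $\mathcal G^{(n)}$ into $\mathcal G^{(n+1)}$, and hence that $\mathcal I=\overline{\bigcup_n\mathcal G^{(n)}}$ is a closed two-sided ideal of $\F^{(\infty)}$. Using that the left action $\phi$ is injective, the standard structure theory of the Cuntz--Pimsner core (see \cite{Pi}) identifies $\F^{(n)}/\mathcal G^{(n)}\cong A/J_X$ compatibly in $n$, where $J_X:=\phi^{-1}(\K(X))$; passing to the limit gives $\F^{(\infty)}/\mathcal I\cong A/J_X$ with the class of the unit equal to $1+J_X$. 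Since $X$ has no finite basis, $\mathrm{id}_X=\phi(1)\notin\K(X)$ (the equivalence ``finite basis $\iff \mathrm{id}_X\in\K(X)$'' is standard), so $1\notin J_X$, the quotient $A/J_X$ is a nonzero unital algebra, and $1+J_X\ne 0$. Therefore $1\notin\mathcal I$, whence $\overline{(\O_X)_{-1}^*(\O_X)_{-1}}\subseteq\mathcal I$ is a proper subalgebra of $\F^{(\infty)}=\O_X^\alpha$, which contains $1$. By Rieffel's theorem, $\alpha$ is not saturated.

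The main obstacle I anticipate is precisely the identification $\F^{(\infty)}/\mathcal I\cong A/J_X$, that is, showing $\mathcal I$ is proper. One must control the non-direct overlaps among the summands $\K(X^{\otimes j})\otimes I$ at each finite level and verify that $(A\otimes I_{n})\cap\mathcal G^{(n)}$ is exactly $J_X$ (this is where injectivity of $\phi$ is essential), and then transport these identifications compatibly through the inductive limit. Everything else — the inclusion into $\mathcal I$ and the translation of ``no finite basis'' into $1\notin J_X$ — is routine.
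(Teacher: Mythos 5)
Your proposal is correct and follows the same overall route as the paper: both argue via Rieffel's criterion at the character $-1$, use the isometry $S_{x_0}$ (respectively the factorization $S_x=S_{x_1}S_{\hat x}$) to show that $\overline{\O_X^{(1)}(\O_X^{(1)})^*}$ lies inside the closed two-sided ideal of the core generated by the compacts, and then derive a contradiction with the absence of a finite basis. The difference is at the final step, and there your version is the more careful one. The paper asserts that the approximating elements $\sum_{i,k,k'}(B_k^i)^*S_{x_0}S_{x_0}^*C_{k'}^i$ lie in $\K(X^{\otimes n})$; this is not literally true --- already the term with $B=C=1$ equals $\theta_{x_0,x_0}\otimes I_{n-1}$, which lies in $\K(X)\otimes I_{n-1}$ but not in $\K(X^{\otimes n})$ when $X^{\otimes (n-1)}$ is not finitely generated. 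What is true is that these elements lie in $\mathcal{G}^{(n)}=\sum_{j=1}^{n}\K(X^{\otimes j})\otimes I_{n-j}$, i.e.\ in your ideal $\mathcal{I}$, so the contradiction genuinely requires knowing that $1\notin\mathcal{I}$, equivalently that each $\mathcal{G}^{(n)}$ is a proper ideal of $\F^{(n)}$. That is exactly the point you isolate and settle, via $\F^{(\infty)}/\mathcal{I}\cong A/J_X$ together with the equivalence between having a finite basis and $\mathrm{id}_X\in\K(X)$; the needed sublemma, that $\phi(a)\otimes I_{n-1}\in\mathcal{G}^{(n)}$ forces $a\in\phi^{-1}(\K(X))$ when $\phi$ is injective, is indeed available in the standard structure theory of the core of Cuntz--Pimsner algebras. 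So your argument reproduces the paper's proof while supplying the justification the paper elides at its crucial step.
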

\begin{proof}
The spectral subspace $\O_X^{(-1)}$ is defined by
\[
  \O_X^{(-1)} = \{\,T \in \O_X\,|\, \alpha_t(T) = e^{-it}T\,\}.  
\]
On the contrary, suppose that the gauge action $(\O_X, {\mathbb{T}},\alpha)$
 is saturated.  
Then $\overline{(\O_X^{(-1)})^* \O_X^{(-1)}}=\O_X^{\mathbb
 T}$. 
Since $S_{x_0}^*S_{x_0}=I$, 
for any $T \in \O_X^{(-1)}$ and $\varepsilon > 0$ there exist 
a family $\{B_k\}_{k=1,\dots,p} \subset {\F}^{(n)}$ for sufficiently
 large $n$ such that 
\[
   \| T -\sum_{k=1}^p S_{x_0}^* B_k\| < \varepsilon/2
\] 
Since $I$ is in $\O_X^{\mathbb T}$, for any $\varepsilon > 0$, 
there exist families 
$\{B_k^i\}_{i=1,\dots,s}$, $\{C_{k'}^i\}_{i'=1,\dots,s} \subset
 {\F}^{(n)}$ for sufficiently large $n$  such that 
\begin{align*}
 & \| I - \sum_{i=1}^s \left( \sum_{k=1}^{p}S_{x_0}^*B_k^i\right)^* 
   \left( \sum_{k'=1}^{q} S_{x_0}^* C_{k'}^i\right)\| < \varepsilon \\
 & \| I - \sum_{i=1}^s \sum_{k,k'} {B_k^i}^*S_{x_0} S_{x_0}^* C_{k'}^i\|
 < \varepsilon.  
\end{align*}
Therefore $I$ can be approximated by elements of $\K(X^{\otimes
 n})$. This contradicts that $X_A$ does not have a finite basis.  
Hence the gauge action is not saturated.  
\end{proof}

The fixed point algebra and the crossed product algebra by the gauge action 
are not Morita equivalent in general. However it is possible to
introduce an endomorphism (similar to the automorphism as above) on the
K-group of the core using isometries in the one-spectral subspace
directly.  

\begin{prop} \rm Let $A$ be a unital \cst-algebra. 
Let $X$ be an $A$-$A$ correspondence.  We assume that $X$ is full,
 left module action of $A$ on $X$ is injective and there exits $x_0 \in
 X$ such that $(x_0|x_0)_A = I$. 
Then there exists an endomorphism $\beta$ on the ${\rm K}_0$-group 
$({\rm K}_0(\O_X^{\alpha}), {\rm K}_0(\O_X^{\alpha})_+)$ of the core satisfying the following: 
For any projection $P \in \O_X^{\alpha}$, any isometry $S$ in the 
one-spectral subspace $\O_X^{(1)}$ of $\O_X$ 
we have $\beta([P]) = [SPS^*]$  in the ${\rm K}_0$-group 
${\rm K}_0(\O_X^{\alpha})$ of the core. The endomorphism $\beta$ 
does not depend on the choice of such isometry $S$.
\end{prop}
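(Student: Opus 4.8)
The plan is to realize $\beta$ as the map induced on $K_0$ by conjugation by an isometry, which is an honest (corner) $*$-homomorphism of the core into itself; functoriality of $K_0$ then delivers additivity and positivity for free, and the only genuine work is independence of the chosen isometry. First I would produce an isometry in $\O_X^{(1)}$: by hypothesis there is $x_0 \in X$ with $(x_0|x_0)_A = I$, so $S := S_{x_0}$ satisfies $S^*S = (x_0|x_0)_A = I$ and $\alpha_t(S) = e^{it}S$, i.e. $S$ is an isometry in the one-spectral subspace, and the class of such $S$ is nonempty. Given any such $S$, define $\rho_S : \O_X^{\alpha} \to \O_X^{\alpha}$ by $\rho_S(x) = SxS^*$. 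One checks that $\rho_S$ lands in the core, since $\alpha_t(SxS^*) = e^{it} S\, \alpha_t(x)\, e^{-it} S^* = SxS^*$ for $x \in \O_X^{\alpha}$, and that it is a $*$-homomorphism: it is plainly $*$-preserving, and $(SxS^*)(SyS^*) = Sx(S^*S)yS^* = S(xy)S^*$ because $S^*S = I$. It is not unital in general, carrying $I$ to the projection $SS^*$, so $\rho_S$ is a corner embedding rather than an isomorphism.

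Setting $\beta := (\rho_S)_*$, the induced map $\mathrm{K}_0(\O_X^{\alpha}) \to \mathrm{K}_0(\O_X^{\alpha})$ is automatically a group homomorphism preserving the positive cone, hence an endomorphism of the ordered group $(\mathrm{K}_0(\O_X^{\alpha}), \mathrm{K}_0(\O_X^{\alpha})_+)$, and on a projection $P$ it satisfies $\beta([P]) = [\rho_S(P)] = [SPS^*]$ as required. Matrix amplifications are handled by applying $\rho_S \otimes \mathrm{id}_{M_n}$, that is, entrywise conjugation, which is conjugation by the isometry $S \otimes 1_n$ in $M_n(\O_X)$ for the amplified gauge action.

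The main point to verify is independence of the choice of $S$. Let $S, T \in \O_X^{(1)}$ be isometries and let $P \in \O_X^{\alpha}$ be a projection. Put $W := SPT^*$. Then $\alpha_t(W) = e^{it} S\, P\, e^{-it} T^* = W$, so $W \in \O_X^{\alpha}$, and using $S^*S = T^*T = I$ together with $P = P^* = P^2$ one computes $W^*W = TPT^*$ and $WW^* = SPS^*$. Hence $SPS^*$ and $TPT^*$ are Murray--von Neumann equivalent in the core, so $[SPS^*] = [TPT^*]$ in $\mathrm{K}_0(\O_X^{\alpha})$; the identical computation with $S \otimes 1_n$ and $T \otimes 1_n$ covers projections in $M_n(\O_X^{\alpha})$. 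Since $\mathrm{K}_0(\O_X^{\alpha})$ is generated by classes of such projections, two homomorphisms agreeing on them coincide, so $(\rho_S)_* = (\rho_T)_*$ and $\beta$ is well defined.

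The hardest part is conceptual rather than computational. Unlike the finite-basis situation of the preceding theorem, $S$ need not be full, so $SS^*$ need not equal $I$ and there is no saturatedness/Morita picture to exploit; this is precisely why $\beta$ is in general only an endomorphism and need not be invertible. The argument sidesteps the issue by using only that conjugation by a single isometry is a $*$-homomorphism, for which $K_0$-functoriality suffices, so neither fullness of $S$ nor the finite-basis hypothesis is required. The remaining checks are immediate from $S^*S = I$: that $\rho_S$ is a genuine $*$-endomorphism, and that the relations $W^*W = TPT^*$, $WW^* = SPS^*$ persist at every matrix level.
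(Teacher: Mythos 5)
Your proposal is correct and follows essentially the same route as the paper: conjugation by an isometry in the one-spectral subspace is a $*$-endomorphism of the core inducing $\beta$ on ${\rm K}_0$, and independence of the choice is established by exhibiting the partial isometry $SPT^*$ (the paper writes $WPS^*$) implementing a Murray--von Neumann equivalence between $SPS^*$ and $TPT^*$. The extra details you supply (existence of $S=S_{x_0}$, matrix amplifications) are harmless elaborations of the same argument.
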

\begin{proof}
For any isometry $S$ in the one-spectral subspace $\O_X^{(1)}$ of $\O_X$, 
$\delta_S(T) = STS^*$ for $T \in \O_X^{\alpha}$ gives an *-endomorphism 
$\delta_S$ on the fixed point algebra $\O_X^{\alpha}$. Therefore 
$\delta_S$ induces an endomorphism $\beta$  on the $K_0$-group 
$({\rm K}_0(\O_X^{\alpha}), {\rm K}_0(\O_X^{\alpha})_+)$ of the core such that 
$\beta([P]) = [SPS^*]$ for any projection $P \in \O_X^{\alpha}$.  
Take another isometry $W$ in the 
one-spectral subspace $\O_X^{(1)}$ of $\O_X$. 
Then 
\[
(WPS^*)^*(WPS^*) = SPW^*WPS^* = SPS^*, \ \ \ (WPS^*)(WPS^*)^* = WPW^*. 
\]
Therefore $SPS^*$ and $WPW^*$ are von Neumann equivalent and 
$[SPS^*] = [WPW^*]$.  
\end{proof}

\begin{defn}  \rm In the above situation, we call the system 
$({\rm K}_0(\O_X^{\alpha}), {\rm K}_0(\O_X^{\alpha})_+, \beta)$ 
the dimension group for the bimodule $X$.  Moreover, 
let $\gamma=(\gamma_0,\dots,\gamma_{N-1})$ be a self-similar map 
on a compact set $K$ and $X = X_{\gamma}$ is the associated 
bimodule. Then we also call the system 
$({\rm K}_0(\O_X^{\alpha}), {\rm K}_0(\O_X^{\alpha})_+, \beta)$ 
the dimension group for the original self-similar map 
$\gamma=(\gamma_0,\dots,\gamma_{N-1})$. 

\end{defn}

We shall calculate this endomorphism on the K-group of the core for some 
cases that the \cst-correspondence which does not have a 
finite basis, and show that the endomorphism is not surjective in general.  

\begin{thm}
Let $(K,\gamma)$ be the self-similar map given by the tent map.  
Then the dimension group is isomorphic to the countably generated 
free abelian group 
${\mathbb Z}^{\infty} = \coprod _{n=0}^{\infty}{\mathbb Z} 
\cong {\mathbb Z}[t]$ as an abstract group and 
the canonical endomorphism 
$\beta$ can be identified with the unilateral shift 
on it i.e. the 
multiplication map by $t$ .The
canonical endomorphism $\beta$ of the dimension group is not surjective.
\end{thm}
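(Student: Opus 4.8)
The plan is to leverage Theorem \ref{thm:dimension-group}, which already gives the abstract group isomorphism ${\rm K}_0(\F^{(\infty)})\cong{\mathbb Z}^{\infty}$, and to determine the canonical endomorphism $\beta$ on the explicit free generators appearing there. The starting point is to exhibit a convenient isometry in the one-spectral subspace. Take the constant function $x_0=\frac{1}{\sqrt 2}\in X_{\gamma}={\rm C}(\C_{\gamma})$; then
\[
(x_0|x_0)_A(y)=\sum_{j=0}^{1}|x_0(\gamma_j(y),y)|^2=\tfrac12+\tfrac12=1,
\]
so $(x_0|x_0)_A=I$ and $S:=S_{x_0}$ is an isometry with $\alpha_t(S)=e^{it}S$, i.e. $S\in\O_{\gamma}^{(1)}$. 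By the Proposition defining $\beta$, we may compute $\beta([P])=[SPS^*]$ with this single $S$, the outcome being independent of the choice.

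The heart of the argument is to recognise the free generators $[p^n]$ of ${\rm K}_0(\F^{(\infty)})$ as $p^n=\theta^{X_{\gamma}^{\otimes n}}_{x_0^{\otimes n},x_0^{\otimes n}}$. Under the correspondence isomorphism $\varphi$ of Section \ref{sec:Matrix} the vector $\varphi(x_0^{\otimes n})$ is the constant $\frac{1}{\sqrt{2^n}}(1,\dots,1)\in\comp^{2^n}$, so that
\[
\Pi^{(n)}\bigl(\theta^{X_{\gamma}^{\otimes n}}_{x_0^{\otimes n},x_0^{\otimes n}}\bigr)(x)=\theta^{Y_{\gamma}^{\otimes n}}_{\varphi(x_0^{\otimes n}),\varphi(x_0^{\otimes n})}(x)=J_n
\]
for every $x$, where $J_n$ has all entries $\frac{1}{2^n}$; since $\Pi^{(n)}$ is isometric this forces $\theta^{X_{\gamma}^{\otimes n}}_{x_0^{\otimes n},x_0^{\otimes n}}=p^n$. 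Using the Cuntz--Pimsner relations $S_{x_0}S_{x_0^{\otimes n}}=S_{x_0^{\otimes(n+1)}}$ and $\theta_{\xi,\eta}=S_\xi S_\eta^*$, I would then compute directly
\[
Sp^nS^*=S_{x_0}\,S_{x_0^{\otimes n}}S_{x_0^{\otimes n}}^*\,S_{x_0}^*
=S_{x_0^{\otimes(n+1)}}S_{x_0^{\otimes(n+1)}}^*
=\theta^{X_{\gamma}^{\otimes(n+1)}}_{x_0^{\otimes(n+1)},x_0^{\otimes(n+1)}}=p^{n+1}.
\]
Hence $\beta([p^n])=[Sp^nS^*]=[p^{n+1}]$ for every $n\ge0$.

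Since $\{[p^n]\}_{n\ge0}$ is a free ${\mathbb Z}$-basis of ${\rm K}_0(\F^{(\infty)})\cong{\mathbb Z}^{\infty}$ --- this is precisely the content of the proof of Theorem \ref{thm:dimension-group}, where the images $\varphi([p^n])$ are shown to be ${\mathbb Z}$-independent generators with $\varphi$ injective --- the homomorphism $\beta$ is completely determined by $\beta([p^n])=[p^{n+1}]$, which is exactly the unilateral shift sending the $n$-th basis vector to the $(n+1)$-th. Under the identification ${\mathbb Z}^{\infty}\cong{\mathbb Z}[t]$ with $[p^n]\mapsto t^n$, this shift is multiplication by $t$. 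Non-surjectivity is then immediate: the image of $\beta$ is the subgroup generated by $\{[p^n]\}_{n\ge1}$, i.e. the ideal $t\,{\mathbb Z}[t]$, which does not contain the generator $[p^0]\leftrightarrow1$, so $[p^0]$ has no preimage and $\beta$ is not surjective.

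I expect the only delicate point to be the two bookkeeping identifications, namely $p^n=\theta_{x_0^{\otimes n},x_0^{\otimes n}}$ and the tensor-compatibility $\varphi(x_0^{\otimes n})=\varphi(x_0)^{\otimes n}$ of the correspondence isomorphism. Once these are in place the determination of $\beta$ collapses to the one-line manipulation $Sp^nS^*=p^{n+1}$ of the Cuntz--Pimsner relations, so there is no genuinely hard analytic step; everything rests on the explicit free generators already supplied by Theorem \ref{thm:dimension-group}.
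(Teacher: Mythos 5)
Your proposal is correct and follows essentially the same route as the paper: both use the isometry $S_{x_0}$ with $x_0=\tfrac{1}{\sqrt 2}\mathbf{1}_{\C_{\gamma}}$, identify the free generators $[p^n]$ from Theorem \ref{thm:dimension-group}, and show $\beta([p^n])=[p^{n+1}]$, the paper phrasing the conjugation as $[p^n\otimes q]$ via the matrix representation while you obtain the same identity $S p^n S^{*}=p^{n+1}$ directly from the Cuntz--Pimsner relations. Your explicit verification that $p^n=\theta_{x_0^{\otimes n},x_0^{\otimes n}}$ and the bookkeeping of $\varphi(x_0^{\otimes n})$ merely fill in details the paper leaves implicit.
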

\begin{proof}
Let $S_1 = (1/\sqrt{\,2}){\bf 1}_{\C_{\gamma}}$. Then 
the endomorphism $\beta$ is given by
$\beta([P]) = [S_1PS_1^*]$. For each projection $P$ in $\F^{(n)}$,
$\beta([P])$ is expressed as $[P \otimes q]$ where 
\[
          q = \frac{1}{2} \begin{pmatrix}
               1 & 1   \\ 1 & 1
	      \end{pmatrix}.  
\]

We identify  the  dimension group with 
the countably generated 
free abelian group ${\mathbb Z}^{\infty}$ 
generated by $c_n = \varphi([p^n])$ in  Theorem \ref{thm:dimension-group}. 
Since 
$$
\beta([p^n]) = [p^n \otimes q] = [p^{n+1}]
$$
we see that 
the endomorphism 
$\beta$ on the dimension group can be identified with the unilateral shift 
on it.
\end{proof}

\end{document}